\newcommand\addressT{\noindent\leavevmode
\medskip
\noindent
Kateryna Tatarko, \\
Department of Pure Mathematics,\\
University of Waterloo, \\
Waterloo, Ontario, Canada, N2L 3G1.\\
\texttt{\small
e-mail:  ktatarko@uwaterloo.ca}
}
\newcommand\addressW{\noindent\leavevmode
	\medskip
	\noindent
	Elisabeth M. Werner, \\
	Department of Mathematics,\\
	Case Western Reserve University,\\
	Cleveland, Ohio, USA, 44106. \\
	\texttt{\small
		e-mail:  elisabeth.werner@case.edu}
}
\date{}
\newcounter{theorem}[section]
\newtheorem{theorem}{Theorem}
\newtheorem{lemma}[theorem]{Lemma}
\newtheorem{cor}[theorem]{Corollary}
\newtheorem{defi}[theorem]{Definition}
\newtheorem{prop}[theorem]{Proposition}
\newcommand{\N}{\mathbb{N}}
\newcommand{\R}{\mathbb{R}}
\newcommand{\dist}{{\rm dist}}
\newcommand{\qw}{\mathcal{W}}
\newcommand{\qv}{\mathcal{V}}
\newcommand{\eps}{\varepsilon}
\def\vol{{\rm vol}}
\begin{document}

\title{$L_p$-Steiner quermassintegrals
\footnote{Keywords: Steiner formula, curvature measures, (Dual) Brunn Minkowski theory,  $L_p$ Brunn Minkowski theory, 2020 Mathematics Subject Classification: 52A39 (Primary), 28A75, 52A20, 53A07 (Secondary) }}

\author{Kateryna Tatarko\thanks{Supported by Canadian Natural Sciences and Engineering Research Council (NSERC) Discovery Grant number 2022-02961}  and Elisabeth M. Werner\thanks{Partially supported by  NSF grant DMS-2103482}}

\date{}

\maketitle

\begin{abstract}
Inspired by an $L_p$ Steiner formula  for the $L_p$ affine surface area proved by Tatarko and Werner,  we define, in analogy to the 
classical Steiner formula, $L_p$-Steiner quermassintegrals. Special cases include the classical mixed volumes, the dual mixed
volumes, the $L_p$ affine surface areas and the mixed $L_p$ affine surface areas.
We investigate the  properties of the $L_p$-Steiner quermassintegrals in a special class of convex bodies.
In particular, we show that they are rotation and reflection invariant  valuations in this class of convex bodies with a certain degree of 
homogeneity.
Such valuations seem new and have not been observed before.
	\end{abstract}
\vskip 3mm

\section{Introduction and Results}

An extension of the classical Brunn Minkowski theory,  the {\it $L_p$ Brunn Minkowski theory}, was  initiated by Lutwak in the groundbreaking paper \cite{Lutwak96}. 
This theory  evolved rapidly over the last years and
due to a number of highly influential works, see, e.g., 
\cite{GaKoSch} - \cite{HLYZ},  
\cite{LYZ2000} -  \cite{PW1}, \cite{Schu}, \cite{WY2010} - \cite{Zhao},  it  is now  a focal part of modern convex geometry. 
Central objects  in  the $L_p$ Brunn Minkowski theory are the $L_p$ affine surface areas,
\begin{equation*} 
as_{p}(K)=\int_{\partial K}\frac{H_{n-1}(x)^{\frac{p}{n+p}}}
{\langle x, N(x)\rangle ^{\frac{n(p-1)}{n+p}}} d\mathcal{H}^{n-1}(x),
\end{equation*}
where $N(x)$ denotes the outer unit normal at $x \in \partial K$, the boundary of $K$, $H_{n-1}(x)$ is the Gauss curvature at $x$ and $\mathcal{H}^{n-1}$ is the usual surface area measure on $\partial K$.
A missing part in the $L_p$ Brunn Minkowski theory was an analog to the classical Steiner formula \cite{Gardner, SchneiderBook} which says that the volume of the outer parallel body $K+ t B^n_2$ of the convex body $K$ with the Euclidean unit ball $B^n_2$ is a polynomial in $t$,
\begin{equation}\label{Steiner polynomial}
\vol_n(K+tB^n_2) = \sum\limits_{i = 0}^n {n \choose i} W_i(K)t^i.
\end{equation}
The coefficients $W_i(K)$ are the classical {\em quermassintegrals}. If the boundary $\partial K$ of the body $K$ is sufficiently smooth, and hence the principal curvatures are well-defined, then $W_i(K)=\frac{1}{n} \int_{\partial K} H_{i - 1} d\mathcal{H}^{n - 1}$, where for $j = 1, \dots, n-1$ $H_j$ denotes the  $j$-th normalized elementary symmetric function of the principal curvatures, see (\ref{ESFPC}).

\vskip 3mm
\noindent
In \cite{TatarkoWerner} an  $L_p$ Steiner formula was proved for the $L_p$ affine surface area. Namely, 
if $K$ is $C^2_+$, then we have for all suitable $t$ and for all $p \in \mathbb{R}$, $p \neq -n$, that 
\begin{eqnarray} \label{Formel}
as_p(K + tB^n_2) = \sum\limits_{k = 0}^\infty \left[ \sum\limits_{m=0}^k  \binom{ \frac{n(1-p)}{n+p}}{{k-m}} \qw^p_{{m}, k}(K) \right] t^k.
\end{eqnarray}
The coefficients $\qw^p_{{m}, k}(K) $ 
are called $L_p$ Steiner coefficients and defined
for a (general) convex body $K$ in $\mathbb{R}^n$, for all $k, m \in \mathbb{N} \cup \{0\}$ as
\begin{eqnarray*} 
	&&\hskip -11mm   \mathcal{W}^p_{m,\, k}(K) = \nonumber \\
	&&\hskip -11mm \int\limits_{\partial K} \langle x, N(x) \rangle^{m - k + \frac{n(1-p)}{n+p}} H_{n-1}^{\frac p{n+p}}  \sum_{\substack{
			i_1, \dots, i_{n-1} \geq 0 \\
			i_1 + 2i_2 + \dots + (n-  1)i_{n-1}=m}}
	c(n, p, i(m))  \prod\limits_{j = 1}^{n - 1}  {n - 1\choose j}^{i_j} H_{j}^{i_j} \, d\mathcal{H}^{n-1}.
\end{eqnarray*}
The $c(n, p, i(m))$ are certain binomial coefficients, see (\ref{cnp})  for the details.
The  $\mathcal{W}^p_{m,\, k}$ involve integration on $\partial K$. We also define corresponding expressions 	$\mathcal{Z}^p_{m,\, k}$ in (\ref{calZp}) 
as integrals over the sphere. If $K$ is sufficiently smooth, we can pass from one expression to the other via the Gauss map. 
\vskip 2mm
\noindent
It was also shown in \cite{TatarkoWerner} that for $p=0$ the above formula  gives the classical Steiner formula \eqref{Steiner polynomial}.
In particular,  the sum appearing in (\ref{Formel}) is  then a finite sum. The sum is also finite for all $p=- \frac{n(l-1)}{l}$, $l \in \mathbb{N}$, \cite{TatarkoWerner}.
In general, the sum is infinite and thus proving (\ref{Formel}) requires a careful analysis of  convergence issues.  Moreover, (\ref{Formel}) also includes  
the Steiner formula for the Minkowski outer parallel body of the dual Brunn Minkowski theory of Lutwak \cite{Lu1} as a special case. 
\vskip 3mm
\noindent
In this paper we define in analogy to the classical Steiner formula,  for a general convex body $K$ in $\mathbb{R}^n$ the {\em $L_p$-Steiner quermassintegrals}, 
or {\em $L_p$-Steiner mixed volumes}  as
\begin{equation}\label{LpSteiner-Ein}
\mathcal{V }^p_{k}(K) =  \sum\limits_{m=0}^k  \binom{ \frac{n(1-p)}{n+p}}{{k-m}} \qw^p_{{m}, k}(K).
\end{equation}
\vskip 2mm
\noindent
If $p=0$, then the $L_p$-Steiner quermassintegrals (\ref{LpSteiner-Ein}) are  the above classical quermassintegrals $W_i(K)$  (see \cite{Gardner, SchneiderBook}) and if $p = \pm \infty$,
they are  Lutwak's dual quermassintegrals \cite{Lu1, Lu1988}. Moreover, the $L_p$ mixed affine surface areas of Lutwak \cite{Lu}, see also \cite{WY2010}, are special cases of the $L_p$-Steiner quermassintegrals.
\vskip 2mm
\noindent
The $L_p$ Steiner coefficients  $\mathcal{W}^p_{m,\, k}$ and the $L_p$-Steiner quermassintegrals $\mathcal{V }^p_{k}$ are complicated expressions, and one needs to clarify when they are well-defined.  In Section \ref{sec_LpSteiner} we introduce a class of convex bodies, the class $\mathcal{C}^n$, for which this is the case.
\vskip 3mm
\noindent
We then investigate   in detail the properties of the $L_p$-Steiner quermassintegrals. Remarkably, it turns out  that they  are new valuations, which seem not to have been observed before. Valuations on convex sets can be considered as a generalization of the notion of a measure and thus are important quantities in the study of convex sets. Examples of valuations which are not measures in the usual sense are the quermassintegrals and the $L_p$ affine surface areas.  
Due to their importance, much work has been devoted to the study of valuations.
Often additional properties like  continuity  with respect to the Hausdorff metric and invariance under e.g., rigid motions are prescribed. 
Such valuations have been completely classified in a remarkable theorem by  Hadwiger  as linear combinations of the quermassintegrals in \cite{Hadwiger}. For a simpler proof, see \cite{Klain}. Subsequently, one direction of study in the theory of valuations concentrated on characterization theorems when one 
successively relaxes  the requirements. For instance, upper semi continuous, $GL(n)$-invariant valuations  have been characterized by Ludwig and Reitzner \cite{LudwigReitzner10}, involving,  in particular,  affine surface area.
\vskip 3mm
\noindent
The following theorem combines some of our main results, namely (parts of)  Theorems \ref{VProperties}  and \ref{WpmkProperties} below.  The class $\mathcal{C}^n$ is defined in Section~\ref{sec_LpSteiner}.
\vskip 2mm
\noindent
{\bf Theorem}
\par
\noindent
{\em Let $K$ be a convex body in $\mathcal{C}^n$ and let $p \in\mathbb{R}$ be such that $ p\geq 0$ or $p < -n$. . Then we have for all $ k \in \N$
	\begin{itemize}
		\item[(i)]  $\mathcal{V}^p_k $ is homogeneous of degree $n \frac{n - p}{n+p} - k$.
		\item[(ii)]  $\mathcal{V}^p_k $ is invariant  under rotations and reflections.
		\item[(iii)] $\mathcal{V}^p_k $ are valuations on the set~$\mathcal{C}^n$.
\end{itemize}	}	

\vskip 3mm
\noindent
In the next two corollaries we list a few consequences of our main theorem.
When $p=1$,  the expressions $\mathcal{V}^p_k$ simplify and we denote  $\mathcal{V}^1_k = \mathcal{W}^1_{k,\, k} = \mathcal{W}_{k,\, k}$. Additionally to the rotation and reflection invariance, one has translation invariance for  $\mathcal{W}_{k,\, k}$.
\vskip 2mm
\noindent
{\bf Corollary}
\par
\noindent
{\em For all $ k \in \N$, 
	$\qv^1_k = \qw_{k, \, k}$  are rigid motion invariant,  $n \frac{n - 1}{n+1} - k$ homogeneous valuations.
}	
\vskip 2mm
\noindent
For $m=0$, the $\qw^p_{{m}, k}$ are in addition upper or lower semi continuous (depending on the range of $p$). As these coincide with the mixed $L_p$ affine surface areas $as_{p+\frac{k}{n} (n+p), -k}$, this shows continuity properties of the latter, which, as far as we know, had not been noted before.
\vskip 2mm
\noindent
{\bf Corollary}
\par
\noindent
{\em For $p \geq 0$, $\mathcal{W}^p_{0,\, k}= as_{p+\frac{k}{n} (n+p), -k}$ are upper semi continuous $n \frac{n-p}{n+p} -k$ homogeneous valuations that are invariant under rotations and reflections. 
}
\vskip 2mm
\noindent
In view of these corollaries, we remark that a characterization of rigid motion invariant upper semi continuous valuations  has been given so far only in the plane \cite{Ludwig2000}.
Because of the last corollary and  the fact that the $L_p$ affine surface areas are semi continuous, 
it is natural to ask about 
continuity properties of the $\mathcal{V}^p_k$ and $\qw^p_{{m}, k}$. We  show in Section~\ref{continuity} that in general we cannot expect any semi continuity properties.

\vskip 2mm
\noindent
A byproduct of our analysis is following combinatorial formula. It may be known, but we did not find a reference for it.
\vskip 2mm
\noindent
{\bf Corollary}
\par
\noindent
{\em Let $p \in \mathbb{R}$, $p \neq -n$ and $k \in \mathbb{N}\cup \{0\}$. Then
	\begin{equation*}
	{n \frac{n-p}{n+p}\choose k} = \sum\limits_{m = 0}^{k} \binom{\frac{n(1-p)}{n+p}}{k-m} \sum_{\substack{
			i_1, \dots, i_{n-1} \geq 0 \\
			i_1 + 2i_2 + \dots + (n-  1)i_{n-1}=m}}
	c(n, p, i(m))  \prod\limits_{j = 1}^{n - 1}  {n - 1\choose j}^{i_j}
	\end{equation*}
where $c(n, p, i(m))$ are certain binomial coefficients, see (\ref{cnp})  for the details.
}

\section{Background}

\subsection{Background from differential geometry} \label {BackDG}

For more information and the details in this section we refer to e.g., \cite{Gardner,  SchneiderBook}.
\vskip 2mm

For a point $x$ on the boundary $\partial K$ of $K$ we denote by $N(x)$ an outward unit normal vector of $K$ at $x.$ Occasionally, we use $N_K(x)$ to emphasize that it is the normal vector of a body $K$ at $x \in \partial K$.  The map $N: \partial K \to S^{n - 1}$ is called the spherical image map or Gauss map of $K$ and it is of class $C^1.$ Its differential is called the Weingarten map. The eigenvalues of the Weingarten map are the principal curvatures $k_i(x)$ of $K$ at $x.$

The $j$-th normalized elementary symmetric functions of the principal curvatures are denoted by $H_j$. They are defined as follows
\begin{equation}\label{ESFPC}
H_j = {n - 1 \choose j}^{-1} \sum_{1\leq i_1 < \dots < i_j \leq n - 1} k_{i_1} \cdots k_{i_j}
\end{equation}
for $j = 1, \dots, n-1$ and $H_0 = 1.$ Note that 
$$H_1 = \frac{1}{n-1}  \sum_{1\leq i \leq n - 1} k_{i} $$ is the mean curvature, that is the average of principal curvatures, and 
$$H_{n - 1}= \prod_{i=1}^{n-1} k_i$$ 
is the Gauss curvature. 

\par
We say that $K$ is of class $C^2_+$ if $K$ is of class $C^2$ and the Gauss map $N$ is a diffeomorphism. This means in particular that  $N$ has a smooth inverse. This is stronger than just $C^2,$ and  is equivalent to the assumption that all principal curvatures are strictly positive, or that the Gauss curvature $H_{n-1} > 0.$
It also means that the differential of $N$, i.e., the Weingarten map, is of  maximal rank everywhere. 
\par
Let $K$ be of class $C^2_+$. For $u \in \R^n \setminus\{0\}$, let $\xi_K(u)$ be the unique point on the boundary of $K$ at which $u$ is an outward  normal vector. 
The map $\xi_K$ is defined on $\R^n \setminus \{0\}$. Its restriction to the sphere $S^{n - 1}$ is called the reverse spherical image map, or reverse Gauss map, $N_K^{-1}: S^{n - 1} \to \partial K$.  The differential of $N_K^{-1}$ is called the reverse Weingarten map. The eigenvalues of  the reverse Weingarten map are called the principal radii of curvature $r_1, \dots, r_{n - 1}$ of $K$ at $u\in S^{n - 1}.$ 
\par
The $j$-th normalized elementary symmetric functions of the principal radii of curvature are denoted by $s_j$. In particular, $s_0=1$,   and for $1 \leq j \leq n-1$ they are defined as
\begin{equation}\label{ESFPR}
s_j = {n - 1 \choose j}^{-1} \sum_{1\leq i_1 < \dots < i_j \leq n - 1} r_{i_1} \cdots r_{i_j}.
\end{equation}
Note that the principal curvatures are functions on the boundary of $K$ and the principal radii of curvature are functions on the sphere. 
\par
Now we describe  the connection between $H_j$ and $s_j.$ For a body $K$ of class $C^2_+$, the principal radii of curvature are reciprocals of the principal curvatures, that is
$$
r_i(u) = \frac1{k_i(N_K^{-1}(u))}.
$$
This implies that for $x \in \partial K$ with $N(x)=u$, 
$$
s_j = {n - 1 \choose j}^{-1} \sum_{1\leq i_1 < \dots < i_j \leq n - 1} \frac1{k_{i_1}(N_K^{-1}(u)) \cdots k_{i_j}(N_K^{-1}(u))} = \frac{H_{n - 1 -j}}{H_{n - 1}}\Big(N_K^{-1}(u)\Big)
$$
and
$$
H_j  =  \frac{s_{n - 1 -j}}{s_{n - 1}}\Big(N(x)\Big), 
$$
for $j = 1, \dots, n-1.$
\par
The mixed volumes $W_i(K)$ of the classical Steiner formula (\ref{Steiner polynomial}) can be expressed via the elementary symmetric functions of the principal curvatures.  
By definition, $W_0(K) = \vol_n(K)$ and
\begin{equation}\label{QMI}
W_i(K) = \frac1n \int\limits_{\partial K} H_{i - 1} d\mathcal{H}^{n - 1}, 
\end{equation} 
for $i = 1, \dots, n.$
\vskip 2mm
\noindent
The dual mixed volume  of the convex bodies $K$ and $L$ that contain $0$ in their interiors, introduced by Lutwak \cite{Lu1}, is defined for all real  $i$  by
$$
\widetilde{V}_i (K, L) = \frac1n \  \int\limits_{S^{n - 1}} \rho_K(u)^{n - i} \rho_L(u)^i d\sigma(u), 
$$
where $\rho_K(u) = \max\{\lambda \geq 0 |\, \lambda u \in K\}$ is the radial function of $K.$ In particular, if $L = B^n_2$, then
\begin{equation}\label{dualQMI}
\widetilde{W}_i(K) = \widetilde{V}_i (K, B^n_2) = \frac{1}{n} \int\limits_{S^{n - 1}} \rho_K(u)^{n - i} d\sigma(u)
\end{equation}
are called {\it dual quermassintegrals} of order $i.$ The corresponding Steiner  formula in the dual Brunn Minkowski theory proved in \cite{Lu1} is
\begin{equation*}
\vol_n(K \, \widetilde{+} \, tB^n_2) = \sum\limits_{i = 0}^n {n \choose i} \widetilde{W}_i(K) \  t^i .
\end{equation*}
\par


\subsection{Background from affine geometry} \label{BackAff}

We denote by $\mathcal{K}^n_o$ the set of all convex bodies in $\mathbb{R}^n$ containing the origin $o$. From now on, we will always assume that $K \in \mathcal{K}^n_o$. For real  $p \neq -n$, we define  the
$L_p$ affine surface area $as_{p}(K)$ of $K$ as in \cite{Lu1} ($p
>1$) and \cite{SW2004} ($p <1,\ p \ne -n$) by
\begin{equation} \label{def:paffine}
as_{p}(K)=\int\limits_{\partial K}\frac{H_{n-1}(x)^{\frac{p}{n+p}}}
{\langle x,N(x)\rangle ^{\frac{n(p-1)}{n+p}}} \, d\mathcal{H}^{n-1}(x)
\end{equation}
and
\begin{equation}\label{def:infty}
as_{\pm\infty}(K)=\int\limits_{\partial K}\frac{H_{n-1} (x)}{\langle
	x,N(x)\rangle ^{n}} d\mathcal{H}^{n-1}(x)
\end{equation}
provided the above integrals exist. Note that these expressions are not always finite. For example, if $K$ is a polytope and $-n < p < 0$, then $as_p(K) = \infty$. 

In particular, for $p=0$, 
\begin{equation}\label{0-asa}
as_{0}(K)=\int\limits_{\partial K} \langle x,N(x)\rangle
\,  d\mathcal{H}^{n-1}(x) = n\, \vol_n(K).
\end{equation}
The case $p=1$, 
$$
as_{1}(K)=\int\limits_{\partial K}H_{n-1}(x)^{\frac{1}{n+1}}  d\mathcal{H}^{n-1}(x)
$$
is the classical affine surface area  which is independent
of the position of $K$ in space.  For dimensions 2 and 3 and sufficiently smooth convex bodies, its definition goes 
back to Blaschke \cite{Blaschke}.
\par
\noindent
If the boundary of $K$ is sufficiently smooth, then (\ref{def:paffine})
and (\ref{def:infty}) can be written as integrals over the boundary
$\partial B^n_2=S^{n-1}$ of the Euclidean unit ball $B^n_2$  in $\mathbb R^n$,
\begin{equation} \label{pasa}
as_{p}(K)=\int\limits_{S^{n-1}}\frac{f_{K}(u)^{\frac{n}{n+p}}}
{h_K(u)^{\frac{n(p-1)}{n+p}}}
d\mathcal{H}^{n-1}(u),
\end{equation}
where $f_{K}(u)$ is the curvature function, i.e. the reciprocal of
the Gaussian curvature $H_{n-1}(x)$ at this point $x \in
\partial K$ that has $u$ as outer normal, and $h_K(u) = \sup\{ \langle x, u \rangle: x \in  K \}$, $u \in \mathbb{R}^n \setminus\{0\}$,  is the support function of $K$. In particular, for
$p=\pm \infty$,
\begin{equation}\label{inf-aff}
as_{\pm\infty}(K)
=\int\limits_{S^{n-1}}\frac{1}{h_K(u)^{n}}
d\mathcal{H}^{n-1}(x)
=n \, \vol_n(K^{\circ}), 
\end{equation}
where $K^\circ=\{y\in \mathbb{R}^n: \langle x, y\rangle\leq 1 \  \text{for all } x\in K\}$ is the polar body of $K$. 
\vskip 2mm
\noindent
For $p=-n$, the $L_{-n}$ affine surface area was introduced in \cite{MW2} as
\begin{equation}\label{L-n}
as_{-n}(K) = \max _{u \in S^{n-1}} f_K(u)^\frac{1}{2} h_K(u)^\frac{n+1}{2}.
\end{equation}
Hug \cite{Hug} proved that \eqref{def:paffine} and \eqref{pasa} coincide for $p>0$.


\section{$L_p$ Steiner quermassintegrals}\label{sec_LpSteiner}

\subsection{Definitions}

We will need the  generalized binomial coefficients. For $\alpha \in \mathbb{R}$ and $k \in \mathbb{N}$, they are defined as
\begin{equation} \label{gcoef}
{\alpha \choose k} =
\left\{
\begin{aligned}
&1 \quad &\text{ if }\  k=0,\\
&0 \quad &\text{ if }\,  k<0 \,  &\text{ or }\, \alpha =0,\\
&\frac{\alpha(\alpha - 1)\cdots(\alpha - k + 1)}{k!} \quad  &\text{ if } k > 0.
\end{aligned}\right.
\end{equation}
\par
\noindent
\vskip 3mm
\noindent 
It was shown in \cite{TatarkoWerner} that the following {\em $L_p$ Steiner formulas} hold. They are a generalization of the classical Steiner formula, which corresponds to the case $p=0$ and Lutwak's Steiner formula of the dual Brunn Minkowski theory, which corresponds to the case $p=\pm \infty$: 
\par
\noindent
\begin{theorem} {\cite{TatarkoWerner}}\label{TatarkoWerner}
Let  $k,m \in \mathbb{N}\cup \{0\}$, and $p \in \mathbb{R}$, $p \neq -n$. 
 If $K \in \mathcal{K}_o^n$ is $C^2_+$, then we have for all $0 \leq t \leq \min\limits_{u\in S^{n - 1}} h_K(u)$, 
	\begin{eqnarray}\label{Steiner as_p}
	as_p(K + tB^n_2) = \sum\limits_{k = 0}^\infty \left[ \sum\limits_{m=0}^k  \binom{ \frac{n(1-p)}{n+p}}{{k-m}} \qw^p_{{ m}, k}(K) \right] t^k 
	=\sum\limits_{k = 0}^\infty \left[ \sum\limits_{m = 0}^{k} \binom{\frac{n(1-p)}{n+p}}{k-m} \mathcal{Z }^p_{m,\, k}(K) \right]t^k. 
	\end{eqnarray}
\end{theorem}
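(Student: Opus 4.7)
The plan is to start from the sphere representation (\ref{pasa})
\[
as_p(L)=\int_{S^{n-1}}f_L(u)^{n/(n+p)}\,h_L(u)^{n(1-p)/(n+p)}\,d\mathcal{H}^{n-1}(u),
\]
which is legitimate because $K\in C^2_+$ forces $K+tB_2^n\in C^2_+$ for every $t\geq 0$. Set $\alpha:=n(1-p)/(n+p)$ and $\beta:=n/(n+p)$. Two elementary additive identities drive the computation: $h_{K+tB_2^n}(u)=h_K(u)+t$ for the support function, and $r_i(K+tB_2^n,u)=r_i(K,u)+t$ for each principal radius of curvature. The latter, via the identity $s_{n-1-l}/s_{n-1}=H_l\circ N_K^{-1}$ recalled in Section~\ref{BackDG}, yields the exact polynomial factorization
\[
f_{K+tB_2^n}(u)=s_{n-1}(u)\Bigl(1+\sum_{l=1}^{n-1}\tbinom{n-1}{l}\,H_l(N_K^{-1}(u))\,t^l\Bigr).
\]

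Next I expand the two factors of the integrand as generalized binomial power series in $t$,
\[
(h_K+t)^{\alpha}=h_K^{\alpha}\sum_{j\geq 0}\binom{\alpha}{j}\bigl(t/h_K\bigr)^j,\qquad (1+z)^{\beta}=\sum_{k\geq 0}\binom{\beta}{k}z^{k},
\]
with $z:=\sum_{l=1}^{n-1}\binom{n-1}{l}H_l\,t^l$. A multinomial expansion of $z^k$ and a regrouping by the common $t$-degree $m=i_1+2i_2+\cdots+(n-1)i_{n-1}$ (which forces $k=|i|:=\sum_l i_l$) identify the coefficient of $t^m$ in $(1+z)^{\beta}$ with the inner sum appearing in $\mathcal{Z}^p_{m,k}$, where $c(n,p,i(m))=\binom{\beta}{|i|}\binom{|i|}{i_1,\ldots,i_{n-1}}$. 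A Cauchy product in $t$ then exhibits the coefficient of $t^k$ in $f_{K+tB_2^n}^{\beta}(h_K+t)^{\alpha}$ as $\sum_{m=0}^k\binom{\alpha}{k-m}$ times the sphere integrand of $\mathcal{Z}^p_{m,k}$; integrating over $S^{n-1}$ gives the second equality of (\ref{Steiner as_p}). For the first equality I push forward through the Gauss map using $d\sigma(u)=H_{n-1}(x)\,d\mathcal{H}^{n-1}(x)$, $h_K(u)=\langle x,N(x)\rangle$, and $s_{n-1}(u)=1/H_{n-1}(x)$, so that $s_{n-1}^{\beta}\,d\sigma$ becomes $H_{n-1}^{p/(n+p)}\,d\mathcal{H}^{n-1}$ and the support-function power becomes $\langle x,N(x)\rangle^{m-k+\alpha}$, producing $\mathcal{W}^p_{m,k}$ exactly.

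The technical heart of the argument, and the main obstacle, is justifying the term-by-term integration. Because $K\in C^2_+$ and $\partial K$ is compact, $h_K$, the principal curvatures $k_i$, and the $H_l$ are continuous, strictly positive, and uniformly bounded on $S^{n-1}$. The support-function series converges absolutely and uniformly in $u$ for $0\leq t<\min_u h_K(u)$ by direct comparison with a geometric series. For the curvature series one may either factor $f_{K+tB_2^n}^{\beta}=s_{n-1}^{\beta}\prod_i(1+tk_i)^{\beta}$ and expand each $(1+tk_i)^{\beta}$ separately on $t<1/\max_{u,i}k_i(u)$, or estimate $|z|<1$ uniformly for $t$ sufficiently small; in either case dominated convergence on a common interval around $0$ permits exchanging the infinite sum with the integral over $S^{n-1}$. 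Matching the optimal $t$-range in the final identity with the natural threshold $\min_u h_K(u)$ — e.g.\ by a sharper estimate exploiting the exact factorization $\prod_i(1+tk_i)$, or by an analytic-continuation argument once the coefficient identity is established on a smaller neighborhood — is the most delicate point in the proof.
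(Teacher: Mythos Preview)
This theorem is not proved in the present paper; it is quoted from \cite{TatarkoWerner} (as the citation attached to the theorem indicates), and the paper uses it as an input for defining and studying the $L_p$-Steiner quermassintegrals. There is therefore no ``paper's own proof'' to compare against here.

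That said, your sketch is the natural and correct strategy, and it is exactly the route taken in \cite{TatarkoWerner}: pass to the sphere via (\ref{pasa}), use $h_{K+tB_2^n}=h_K+t$ and $r_i(K+tB_2^n,\cdot)=r_i(K,\cdot)+t$ to obtain the exact polynomial factorization $f_{K+tB_2^n}=s_{n-1}\sum_{l=0}^{n-1}\binom{n-1}{l}H_l\,t^l$, expand both factors as generalized binomial series, regroup via the multinomial theorem, and then push back to $\partial K$ through the Gauss map. Your identification of $c(n,p,i(m))$ and of the integrands of $\mathcal{Z}^p_{m,k}$ and $\mathcal{W}^p_{m,k}$ is correct.

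You have also correctly located the one genuinely nontrivial issue: justifying termwise integration and obtaining the stated range $0\le t\le \min_u h_K(u)$. Your observation that the factorization $f_{K+tB_2^n}^{\beta}=s_{n-1}^{\beta}\prod_i(1+tk_i)^{\beta}$ gives uniform absolute convergence only on $t<1/\max k_i$ is right, and this radius can indeed be strictly smaller than $\min_u h_K(u)$. The original paper handles this carefully; your suggestion to extend by analyticity once the coefficient identity is known on a smaller interval is a legitimate way to close the gap, but as written your argument does not yet establish convergence on the full stated range.
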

A convex body $K$ has a unique outer normal almost everywhere, and by a theorem of Alexandroff~\cite{Alexandroff} and Busemann-Feller \cite{Buse-Feller} the generalized second partial derivatives exist almost everywhere.  Therefore, we extend the definition of the $L_p$ Steiner coefficients from smooth convex bodies \cite{TatarkoWerner} to general convex bodies $K$.
\begin{defi} [$L_p$-Steiner coefficients] Let $K \in \mathcal{K}_o^n$, $k,m \in \mathbb{N}\cup \{0\}$, and $p \in \mathbb{R}$, $p \neq -n$. Then we define
\begin{eqnarray} \label{calWp}
	&&\hskip -11mm  \, \,  \mathcal{W}^p_{m,\, k}(K) = \nonumber \\
	&&\hskip -11mm \int\limits_{\partial K} \langle x, N(x) \rangle^{m - k + \frac{n(1-p)}{n+p}} H_{n-1}^{\frac p{n+p}}  \sum_{\substack{
			i_1, \dots, i_{n-1} \geq 0 \\
			i_1 + 2i_2 + \dots + (n-  1)i_{n-1}=m}}
	 c(n, p, i(m))  \prod\limits_{j = 1}^{n - 1}  {n - 1\choose j}^{i_j} H_{j}^{i_j} \, d\mathcal{H}^{n-1}
	\end{eqnarray}
and 
 \begin{eqnarray} \label{calZp}
	&&\hskip -11mm  \, \, 
	\mathcal{Z}^p_{m,\, k}(K) = \nonumber \\
	&&\hskip -11mm \int\limits_{S^{n-1}} h_K^{m - k + \frac{n(1-p)}{n+p}} \,  s_{n-1}^{\frac n{n+p}} (u) \sum_{\substack{
			i_1, \dots, i_{n-1} \geq 0 \\
			i_1 + 2i_2 + \dots + (n-  1)i_{n-1}=m}}
	c(n, p, i(m))  \frac{ \prod\limits_{j = 1}^{n - 1}  {n - 1\choose j}^{i_j} s_{n-1-j}^{i_j}\left(u \right)}{s_{n-1}^{\sum\limits_{j} i_j}(u)}  \, d\mathcal{H}^{n-1}. 
	\end{eqnarray}
\end{defi}
\vskip 2mm
\noindent
There we have put
\begin{equation} \label{cnp}
c(n, p, i(m)) = {\frac {n}{n+p} \choose i_1 + \dots + i_{n - 1}} {i_1 + \dots + i_{n - 1}\choose i_1,\, i_2, \, \ldots, i_{n-1}},
\end{equation}
where the sequence $i(m) = \{i_j\}_{j = 1}^{n-1}$ is such that $i_1 + 2 i_2 +\dots + (n-1)i_{n - 1} =m$,
and 
\begin{equation*}
{q \choose i_1, i_2,\dots, i_l} = \frac{q!}{i_1!i_2!\cdots i_l!}
\end{equation*}
is the multinomial coefficient where $q = i_1 + \dots + i_l$. Note that 
\begin{equation*} \label{mcoef}
{q \choose i_1, i_2,\dots, i_l} = 0 \quad \text{ if }\  i_j<0 \text{ or } i_j>q.
\end{equation*}
When $l = 2,$ we get the binomial coefficients. 
\vskip 2mm

\vskip 4mm
\noindent	
{\bf Remark} (see \cite{TatarkoWerner})
\vskip 1mm
\noindent  If $p = 1$, the $L_p$ Steiner formula (\ref{Steiner as_p})  reduces to 
\begin{eqnarray}\label{Steiner as_1}
	as_1(K + tB^n_2) &=& \sum\limits_{k = 0}^\infty \mathcal{W}_{k,\, k}(K) \,t^k	 =  \sum\limits_{k = 0}^\infty \mathcal{Z}_{k,\, k}(K) \,t^k,
\end{eqnarray}	
and  the expressions (\ref{calWp}) and  (\ref{calZp}) simplify to 
\begin{eqnarray}\label{calW1}
	1. \, \,  \mathcal{W}_{k,\, k}(K) &=& \mathcal{W}^1_{k,\, k}(K) = \nonumber \\
	&=& \int\limits_{\partial K}  H_{n-1}^{\frac 1{n+1}}  \sum_{\substack{
			i_1, \dots, i_{n-1} \geq 0 \\
			i_1 + 2i_2 + \dots + (n-  1)i_{n-1}=k}}
	c(n, 1, i(k))  \prod\limits_{j = 1}^{n - 1}  {n - 1\choose j}^{i_j} H_{j}^{i_j}\left(x \right) \, d\mathcal{H}^{n-1}
\end{eqnarray}
and 
\begin{eqnarray}\label{calU1}
	2. \, \, 	\mathcal{Z}_{k,\, k} (K) &=& \mathcal{Z}^1_{k,\, k} (K) = \nonumber\\
	&=& \int\limits_{S^{n-1}} s_{n-1}^{\frac n{n+1} }  \sum_{\substack{
			i_1, \dots, i_{n-1} \geq 0 \\
			i_1 + 2i_2 + \dots + (n-  1)i_{n-1}= k}}
	c(n, 1, i(k))  \frac{ \prod\limits_{j = 1}^{n - 1}  {n - 1\choose j}^{i_j} s_{n-1-j}^{i_j}}{s_{n-1}^{\sum\limits_{j} i_j}(u)}  \, d\mathcal{H}^{n-1}.
\end{eqnarray}

\medskip

\par
\noindent The natural question is for which class of bodies and for which parameters the quantities \eqref{calWp}  and \eqref{calZp} are well-defined. We are investigating this next. To do so, we introduce the notions of inner and outer rolling balls.

We say that $K$ admits an {\it inner rolling ball} of radius $r > 0$ if for any $x \in \partial K$
$$
B^n_2(x - r N(x), r) \subseteq K,
$$
and $K$ admits an {\it outer rolling ball} of radius $R>0$ if for any $x \in \partial K$
$$
K \subseteq B^n_2(x - R N(x), R).
$$
We then define the following class of convex bodies: 
\begin{align}\label{class1}
	\mathcal{C}^n = \{K \in \mathcal{K}_o^n:\   K \ \textup{admits an inner and outer rolling ball}\}.
\end{align}
The class $\mathcal{C}^n$ proved useful in many contexts, e.g. in approximation of convex bodies by random polytopes \cite{SchuettWerner2003}.
Note that if $K\ \in \mathcal{C}^n$ then $K$ is $C^1$ and strictly convex. Moreover, if $K$ is $C^2_+$ then $K$ is in $\mathcal{C}^n$ but the converse does not hold in general.
We discuss the class $\mathcal{C}^n$ in detail in Section~\ref{first analysis}. In particular, we show that the $L_p$ Steiner coefficients (for $p \in \mathbb{R}$ such that $\frac{p}{n+p} \geq 0$) are finite for any $K \in \mathcal{C}^n$. As $K \in \mathcal{C}^n$ is $C^1$ we can pass from $\mathcal{W}^p_{m,k}$ to $\mathcal{Z}^p_{m,\, k}$ via the Gauss map $N: \partial K \to S^{n-1}$, and as $K$ is strictly convex, we can pass from 	$\mathcal{Z}^p_{m,\, k}$ to $\mathcal{W}^p_{m,k}$ via the inverse of the Gauss map. Thus, it is enough to consider $\mathcal{W}^p_{m,k}$.

\vskip 3mm
\noindent	
In analogy to the classical Steiner formula \eqref{Steiner polynomial},  Theorem~\ref{TatarkoWerner} leads us to define the  $L_p$-Steiner quermassintegrals.
\par
\noindent
\begin{defi} [$L_p$-Steiner quermassintegrals]
Let $K$ be a convex body in $\mathbb{R}^n$,   $p \in \R$, $p \ne -n$ and  $k \in \N \cup  \{0\} $. Then we define the  $L_p$-Steiner quermassintegrals
	\begin{equation} \label{p-SC1}
	\mathcal{V}_k^p(K) = \sum\limits_{m = 0}^{k} \binom{\frac{n(1-p)}{n+p}}{k-m} \mathcal{W}^p_{m,\, k}(K),
	\end{equation}
	and
		\begin{equation}  \label{p-SC2}
	\mathcal{U}_{k}^p(K) = \sum\limits_{m = 0}^{k} \binom{\frac{n(1-p)}{n+p}}{k-m} \mathcal{Z }^p_{m,\, k}(K).
	\end{equation} 
\end{defi}
\vskip 2mm
\noindent
We will mainly consider convex bodies that are in the class $\mathcal{C}^n$ and $p\in \mathbb{R}$ such that $\frac{p}{n+p} \geq 0$. Then we can pass from $\mathcal{U}^p_{k}$ to $\mathcal{V}^p_{k}$ via the Gauss map $N: \partial K \to S^{n-1}$ and as $K$ is strictly convex, we can pass from 	$\mathcal{V}^p_{k}$ to $\mathcal{U}^p_{k}$ via the inverse of the Gauss map. Thus, it suffices to consider $\mathcal{V}^p_{k}$.
\vskip 2mm
\noindent
If $p=1$, then
	\begin{equation*}
	\mathcal{V}_k^1(K) = \mathcal{W}_{k,\, k}(K)\hskip 10mm \text{and} \hskip 10mm \mathcal{U}_k^1(K) = \mathcal{Z}_{k,\, k}(K).
	\end{equation*}	
\vskip 2mm
\noindent
In general, the expressions $\mathcal{V}_k^p(K)$ may become infinite or undetermined, depending on the body $K$ and the parameter $p$. In Section \ref{first analysis}, we  will discuss this issue and we will show  that $\mathcal{V}_k^p(K)$ are finite for $K \in \mathcal{C}^n$ and $p\in \mathbb{R}$ such that $\frac{p}{n+p} \geq 0$. 
\vskip 4mm
\noindent	
{\bf Remark}
\vskip 1mm
\noindent
1. Note that the $L_p$-Steiner quermassintegrals as well as $L_p$ Steiner coefficients can be negative. This is not the case for the classical quermassintegrals.

\medskip

\noindent 2. Nevertheless, the $L_p$-Steiner quermassintegrals closely parallel the classical Steiner coefficients. This is further illustrated next. In the classical Steiner  formula (\ref{Steiner polynomial}), the first and the last coefficients are
$$W_0(K) = \vol_n(K) \,\, \, \text{ and} \,\, \, \, 
W_n(K) = \vol_n(B^n_2),
$$ 
respectively. In the $L_p$ Steiner formula (\ref{Steiner as_p}),  the first coefficient is $\mathcal{V}_0^p(K) = as_p(K)$. It was shown in \cite{TatarkoWerner} that  when  $p= \frac{-n(l-1)}{l}$, $ l \in \mathbb{N}$,  the sums (\ref{Steiner as_p}) are finite with the highest term $t^{n(2l-1)}$:
\begin{align*}
&as_{-\frac{n(l-1)}l}(K + tB^n_2) = \sum_{k=0}^{l(n-1)} \left[\sum_{m=0}^{k} \binom{l+n(l-1)}{k-m} \qw_{m, \, k}(K) \right]t^k \,+
\\
&\sum_{k=l(n-1)+1}^{l+n(l-1)} \left[\sum_{m=0}^{l(n-1)} \binom{l+n(l-1)}{k-m} \qw_{m, \, k} (K) \right]t^k 
+\sum_{k=l+n(l-1)+1}^{n(2l-1)} \left[\sum_{m=k-(l+n(l-1))}^{l(n-1)} \binom{l+n(l-1)}{k-m} \qw_{m, \, k} (K) \right]t^k.
\end{align*}
Then the last coefficient is 
\begin{align*}
\int\limits_{S^{n-1}}  s_{n-1}^{l} (u) \sum_{\substack{
		i_1, \dots, i_{n-1} \geq 0 \\
		i_1 + \dots + (n-  1)i_{n-1}=l(n-1)}}
{l \choose i_1 + \dots + i_{n - 1}} {i_1 + \dots + i_{n - 1}\choose i_1,\, \ldots, i_{n-1}}  \frac{ \prod\limits_{j = 1}^{n - 1}  {n - 1\choose j}^{i_j} s_{n-1-j}^{i_j}\left(u \right)}{s_{n-1}^{\sum\limits_{j} i_j}(u)}  \, d\mathcal{H}^{n-1}(u). 
\end{align*}
We only get a contribution in the above sum if $i_1 + \dots + i_{n - 1} \leq l$ and $i_1 + 2i_2 + \dots + (n-  1)i_{n-1}= l(n-1).$ Therefore,
$$
i_1 + 2i_2 + \dots + (n- 2)i_{n-2}= (l-i_{n-1})(n-1) \geq (n-1)(i_1 + \dots + i_{n - 2}),
$$
since $i_1 + \dots + i_{n - 1} \leq l.$ This is true only when $i_1 = i_2 = \dots = i_{n-2} = 0$. Thus, the only possible choice of indices is $ i_1 = i_2 = \dots = i_{n-2} = 0$ and $i_{n-1} = l.$ Then the coefficient of $t^{n(2l-1)}$, i.e. the last coefficient, is
\begin{align*}
\int\limits_{S^{n-1}} \, d\mathcal{H}^{n-1}(u) =\vol_{n-1}(\partial B^n_2) =  as_p(B^n_2),
\end{align*}
which shows the parallel behavior of the classical and $L_p$ Steiner coefficients. 

\vskip 4mm

\section {A first analysis of the $L_p$-Steiner quermassintegrals} \label{first analysis}

\subsection{The class $\mathcal{C}^n$}\label{gen_obs}

\begin{prop}
	Let $p \in\mathbb{R}$ be such that $ p\geq 0$ or $p < -n$. Let $k, m \in \mathbb{N} \cup \{0\}$ and let the sequence $i(m) = \{i_j\}_{j = 1}^{n-1}$ be such that $i_1 + 2 i_2 +\dots + (n-1)i_{n - 1} =m$. Let $\mathcal{C}^n$ be defined as in \eqref{class1}. Then we have for any $K \in \mathcal{C}^n$ 
	$$
	-\infty < \mathcal{W}^p_{m,\, k}(K) < \infty \qquad \textup{and} \qquad 	-\infty <\mathcal{V }^p_{k} (K) < \infty.
	$$ 
\end{prop}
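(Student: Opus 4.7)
The plan is to exploit the inner and outer rolling ball conditions characterizing $\mathcal{C}^n$ to get two-sided positive bounds, holding $\mathcal{H}^{n-1}$-a.e.\ on $\partial K$, on every factor in the integrand of $\mathcal{W}^p_{m,k}(K)$ as displayed in \eqref{calWp}. Since $\partial K$ has finite surface area, this reduces finiteness of $\mathcal{W}^p_{m,k}(K)$ to integration of a bounded function, and then $\mathcal{V}^p_k(K)$ is automatically finite as a finite linear combination with well-defined generalized binomial coefficients.

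Let $r>0$ and $R>0$ be the radii of the inner and outer rolling balls of $K$. By the Alexandrov--Busemann--Feller theorem the principal curvatures $k_1,\dots,k_{n-1}$ exist $\mathcal{H}^{n-1}$-a.e.\ on $\partial K$, and the rolling ball conditions translate into the a.e.\ pointwise bounds $1/R \leq k_i(x) \leq 1/r$. This yields $R^{-j}\leq H_j(x)\leq r^{-j}$ for each $j=1,\dots,n-1$, so $\prod_{j=1}^{n-1} H_j^{i_j}$ is sandwiched between positive constants depending only on $n,r,R$ and the multi-index. Because the hypothesis $p\geq 0$ or $p<-n$ forces $p/(n+p)\geq 0$, the factor $H_{n-1}^{p/(n+p)}$ is also bounded above and below by positive constants. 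Similarly, compactness gives $\langle x,N(x)\rangle \leq \mathrm{diam}(K)$, and since $o$ lies in the interior of $K$ (the presence of an inner rolling ball ensures a positive inradius), $\langle x,N(x)\rangle = h_K(N(x)) \geq c_K>0$; hence the factor $\langle x,N(x)\rangle^{m-k+\frac{n(1-p)}{n+p}}$ lies between two positive constants as well, for any real value of the exponent.

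Combining these bounds with the finiteness of the outer sum over multi-indices $(i_1,\dots,i_{n-1})$ with $i_1+2i_2+\cdots+(n-1)i_{n-1}=m$ and the boundedness of the coefficients $c(n,p,i(m))\prod_j\binom{n-1}{j}^{i_j}$, the integrand in \eqref{calWp} is a.e.\ bounded in absolute value by a constant $C=C(K,n,p,k,m)$. Integrating over $\partial K$ then gives $|\mathcal{W}^p_{m,k}(K)|\leq C\,\mathcal{H}^{n-1}(\partial K)<\infty$, and finally $\mathcal{V}^p_k(K) = \sum_{m=0}^{k}\binom{n(1-p)/(n+p)}{k-m}\mathcal{W}^p_{m,k}(K)$ is finite.

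The step I expect to require the most care is the justification of the a.e.\ curvature bounds $1/R\leq k_i\leq 1/r$ from the rolling ball conditions for a body in $\mathcal{C}^n$ that is only $C^1$ and strictly convex, not necessarily $C^2_+$. This equivalence between the existence of inner/outer rolling balls and a.e.\ two-sided bounds on the principal radii of curvature is classical, but it must be invoked in the Alexandrov sense rather than at points of classical twice-differentiability only; I would cite the standard treatment in \cite{SchneiderBook} (and the approximation arguments used in \cite{SchuettWerner2003}) rather than reprove it.
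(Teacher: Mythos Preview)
Your proposal is correct and follows essentially the same approach as the paper: bound $\langle x,N(x)\rangle$ above and below using that $o$ is interior to $K$, bound each principal curvature between $1/R$ and $1/r$ using the rolling ball conditions (hence bound every $H_j$ and $H_{n-1}^{p/(n+p)}$, the latter since $p/(n+p)\geq 0$), and conclude that the integrand in \eqref{calWp} is bounded by a constant times the surface area of $K$. Your explicit flagging of the a.e.\ curvature bounds in the Alexandrov sense is in fact more careful than the paper, which simply asserts the curvature bounds without comment on regularity.
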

\begin{proof}
	The   $L_p$ Steiner coefficients  $\mathcal{W}^p_{m,\, k}$ (\ref{calWp}), and thus  
	the $L_p$-Steiner quermassintegrals $\mathcal{V }^p_{k}$ (\ref{p-SC1}), are sums (up to  coefficients) of expressions of the form 
	\begin{equation}\label{Teil}
		\int\limits_{\partial K} \langle x, N(x) \rangle^{m - k + \frac{n(1-p)}{n+p}} H_{n-1}^{\frac p{n+p}}  \,  \prod\limits_{j = 1}^{n - 1}  {n - 1\choose j}^{i_j} H_{j}^{i_j}\, d\mathcal{H}^{n-1}.
	\end{equation}
As $K$ is a convex body in $\mathcal{K}_o^n$, there is $a>0$, $a \in \mathbb{R}$, such that 
$
B^n_2(0,a) \subset K \subset B^n_2\left(0, \frac{1}{a}\right)
$.
Thus 
\begin{equation}\label{supp}
a \leq \langle x, N(x) \rangle \leq \frac{1}{a}.
\end{equation} 
Since $K$ admits an inner rolling ball with radius $r$ and an outer rolling ball with radius $R$, we have for all $x\in \partial K$
$$
\frac1{R^{n-1}}\leq H_{n-1}(x) \leq \frac1{r^{n-1}}
$$
and each $k_i(x)$ is bounded from above and below by $\frac1r$ and $\frac1R$, respectively. This implies 
\begin{align*}
	 \gamma \left(\frac1R\right)^{m + (n-1) \frac{p}{n+p}} \vol_{n-1}(\partial K) \leq \int\limits_{\partial K}  H_{n-1}^{\frac p{n+p}}  \,  \prod\limits_{j = 1}^{n - 1}  {n - 1\choose j}^{i_j} H_{j}^{i_j}\, d\mathcal{H}^{n-1} \leq \gamma \left(\frac1r\right)^{m + (n-1) \frac{p}{n+p}} \vol_{n-1}(\partial K).
\end{align*} 
where $\gamma = \prod\limits_{j = 1}^{n - 1}  {n - 1\choose j}^{i_j}.$

Combining this with \eqref{supp}, gives that the $L_p$ Steiner coefficients $\mathcal{W}^p_{m,\, k}$ are finite. Thus the $\mathcal{V }^p_{k}$ are  also finite as a finite sum of $\mathcal{W}^p_{m,\, k}$. 
\end{proof}
\vskip 2mm
\par
\vskip 2mm
\noindent The next example shows that the assumptions on the parameter $p$ and the rolling ball assumptions are needed so that the $L_p$-Steiner quermassintegrals are finite.

\medskip

\noindent{\bf Example 4.2} \ Let $1 < r < \infty$ and $B_r^n = \{ x \in \mathbb{R}^2: |x_1|^r + \cdots + |x_n|^r \leq 1\}$. For $m = 0$
$$
\mathcal{W}^p_{0,\, k}(B_r^n) =  \int\limits_{\partial K} \langle x, N(x) \rangle^{ - k + \frac{n(1-p)}{n+p}} H_{n-1}^{\frac p{n+p}}  
\, d\mathcal{H}^{n-1} = \int\limits_{\partial K} \langle x, N(x) \rangle^{\frac{n(1-p)}{n+p}} H_{n-1}^{\frac p{n+p}}   \langle x, N(x) \rangle^{-k}
\, d\mathcal{H}^{n-1} .
$$
Using \eqref{supp}, we get 
$$
a^{k} as_p(B_r^n) \leq \mathcal{W}^p_{0,\, k}(B_r^n) \leq \frac{1}{a^k} as_p(B_r^n) .
$$
Now we observe the following:
\medskip

{\it
	(i) Let $2 < r < \infty$. Then $B^n_r$ admits inner and outer rolling balls. It was shown in \cite{SW2004} that for $-n < p < - \frac{n}{r-1}$
	$$
	as_p(B_r^n) = \infty.
	$$
	This shows that the assumption $\frac{p}{n+p} \geq 0$ is needed.
	
	(ii) Let $1 < r < 2$. Then   $B^n_r$ does not admit an inner rolling ball. It was shown in \cite{SW2004} that 
	$$
	as_p(B_r^n) = \infty
	$$
	for $-\frac{n}{r-1} \leq p < -n$. This case shows that the assumption on rolling balls is needed.
	 
}
\par
\noindent

\begin{prop}\label{int_union}
The class $\mathcal{C}^n$ is closed under intersections and unions assuming that the union is convex.
\end{prop}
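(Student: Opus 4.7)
The plan is to treat the intersection and union cases separately, and within each to verify the outer and inner rolling ball properties. Two elementary monotonicity facts will be used throughout: if a convex body admits an outer rolling ball of radius $R'$ at a boundary point $x$, it also admits one of any radius $R \geq R'$ at $x$ (the smaller ball sits inside the larger one, both internally tangent to the common supporting hyperplane at $x$, as a direct triangle-inequality check shows); symmetrically, admitting an inner rolling ball of radius $r'$ at $x$ implies admitting one of any radius $0 < r \leq r'$. These observations let me replace the two separate radii $R_1, R_2$ (resp.\ $r_1, r_2$) by a common radius.

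For the intersection $K = K_1 \cap K_2$, the outer rolling ball property is essentially immediate. At any smooth boundary point $x \in \partial K$, we have $x \in \partial K_i$ for some $i$ with $N_K(x) = N_{K_i}(x)$, and since $K \subseteq K_i$, the outer rolling ball of $K_i$ at $x$, inflated to the uniform radius $R = \max\{R_1, R_2\}$, still contains $K$. The inner rolling ball is more delicate. At a smooth point $x \in \partial K \cap \partial K_i \cap \mathrm{int}(K_j)$ with $i\neq j$, the inner rolling ball of $K_i$ at $x$ lies in $K_i$ by hypothesis, and one has to show that for some uniform $r > 0$ it also lies in $K_j$. This is done through a local estimate: the rolling ball of radius $r$ at $x$ is contained in $B(x, 2r)$, so for $r$ small enough it sits inside the ball $B(x, \mathrm{dist}(x,\partial K_j))$ and hence in $K_j$; one then uniformizes using compactness of $\partial K$ together with the geometry of how $\partial K_1$ and $\partial K_2$ interact.

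For the union $K = K_1 \cup K_2$ (assumed convex), the inner rolling ball is inherited easily: at $x \in \partial K \cap \partial K_i$ with unique normal, convexity of $K$ forces $N_K(x) = N_{K_i}(x)$, and the inner rolling ball of $K_i$ at $x$ lies in $K_i \subseteq K$; the uniform radius $r = \min\{r_1, r_2\}$ works. The outer rolling ball of $K$ is the more delicate direction. The outer rolling ball of $K_i$ at $x$ contains $K_i$ but not necessarily $K_j$; however, convexity of $K = K_1\cup K_2$ guarantees that $K_j$ lies in the supporting half-space of $K$ at $x$, so enlarging the rolling ball radius sufficiently absorbs $K_j$. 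A uniform $R$ follows from the boundedness of the bodies together with the outer rolling ball hypotheses on $K_1, K_2$.

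The main obstacle is the uniformization of the rolling radii across the whole boundary of $K$, especially near points where $\partial K_1$ and $\partial K_2$ meet; here, the pointwise local estimates must be glued together via a compactness argument, relying on the $C^1$ and strict convexity properties that bodies in $\mathcal{C}^n$ automatically enjoy.
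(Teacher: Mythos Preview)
Your proposal has a genuine gap in the inner-rolling-ball argument for the intersection. As written, your argument for $K_1\cap K_2$ never invokes the hypothesis that $K_1\cup K_2$ is convex; you treat only points $x\in\partial K_i\cap\operatorname{int}(K_j)$ and plan to ``uniformize via compactness''. But without the convex-union hypothesis the conclusion is simply false: take $K_1=B^n_2((1,0,\dots,0),2)$ and $K_2=B^n_2((-1,0,\dots,0),2)$. Both are in $\mathcal{C}^n$, yet $K_1\cap K_2$ is a lens with a ridge along $\partial K_1\cap\partial K_2$, hence not $C^1$ and certainly without an inner rolling ball. Since your compactness scheme, if it worked, would apply equally well to this example, it cannot work as stated. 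Concretely, your pointwise radius $\tfrac12\operatorname{dist}(x,\partial K_j)$ tends to $0$ as $x$ approaches $\partial K_1\cap\partial K_2$, and compactness alone gives no positive lower bound there.

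What the paper does differently is precisely to use the convexity of $K_1\cup K_2$ at this step. First, at any $x\in\partial K_1\cap\partial K_2$ one shows $N_{K_1}(x)=N_{K_2}(x)$ (because such $x$ lies on $\partial(K_1\cup K_2)$, and a supporting hyperplane to the convex union supports each $K_i$, forcing the unique $C^1$ normals to agree); the common inner rolling ball of radius $\min\{r_1,r_2\}$ then works at those points. Second, for $x\in\partial K\cap\partial K_i\cap\operatorname{int}(K_j)$, the paper avoids compactness altogether and gives an explicit uniform radius: it builds the cone from the origin over the inner rolling ball of $K_j$ at the radial boundary point $z\in\partial K_j$ on the ray through $x$, and a similar-triangles computation yields a ball $B^n_2(x,\rho_0)\subset K_j$ with $\rho_0=r_{K_j}\cdot\min_{\partial K_i}\|\cdot\|/\max_{\partial K_j}\|\cdot\|$ independent of $x$. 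Intersecting this with the inner rolling ball of $K_i$ at $x$ gives the required uniform $r$. Your plan is missing both ingredients: the normal-agreement step on $\partial K_1\cap\partial K_2$ (which is where the convex-union hypothesis enters), and any quantitative replacement for the degenerating distance bound.
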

\begin{proof}
	Let $K$ and $L$ be convex bodies in $\mathcal{C}^n$. First, we show that $K \cap L$ and $K \cup L$  admit an inner rolling ball. We start with $K \cup L$ and split its boundary into disjoint sets as follows
	$$
	\partial(K \cup L) = \{\partial K\cap\partial L\}\cup\{\partial K\cap L^{c}\}\cup\{K^{c}\cap \partial L\} .
	$$
	Then we consider the following cases:
	\begin{itemize}
		\item[1)] Let $x \in \partial K\cap\partial L.$ Since $x\in \partial K$ and $K$ is in $\mathcal{C}^n$, then $N_K(x)$ is unique. At the same time, since $x\in \partial L$ and $L$ is in $\mathcal{C}^n$, then $N_L(x)$ is unique. Hence, $N_K(x) = N_L(x) = N(x)$ and there are inner rolling balls of $K$ and $L$ at $x$ with radii $r_K$ and $r_L$, respectively. We take $r = \min\{r_K, r_L\}$. Then $B^n_2(x - r N(x), r) \subseteq K \cup L$.
		\item[2)] Let $x \in \partial K\cap L^{c}$. Then $B^n_2(x - r_K N_K(x), r_K) \subseteq K$. By taking $r = \min\{r_K, r_L\}$ and $N(x) = N_K(x)$, we get that $B^n_2(x - r N(x), r) \subseteq K$ which implies $B^n_2(x - r N(x), r) \subseteq K \cup L.$
		\item[3)] The proof that there is an inner rolling ball when $x \in K^{c}\cap \partial L$ is similar to the case 2).
	\end{itemize}	
	Next, we deal with $K \cap L$ and decompose its boundary as
	$$
	\partial(K \cap L) = \{\partial K\cap\partial L\}
	\cup\{\partial K\cap\operatorname{int}(L)\}\cup\{\operatorname{int}(K)\cap \partial L\}.
	$$
	Again, we consider the following cases:
	\begin{itemize}
		\item[1)] Let $x \in \partial K\cap\partial L.$ Then the proof is similar to the case 1) for $K \cup L$.
		\item[2)] Let $x \in \partial K\cap\operatorname{int}(L)$. The case when $x \in \operatorname{int}(K)\cap \partial L$ is treated similarly.
		\par
		Let $z = \{\lambda x: \lambda \geq 0\} \cap \partial L$.
		\begin{itemize}
			\item[2a)] Assume first that $z \in \partial L$ is parallel to $N_L(z)$. Consider the  convex cone that is the convex hull of the origin $o$ and the inner ball $B^n_2(z - r_L N_L(z), r_L)$ (see Figure \ref{Fig:r1}). Observe that this cone is contained in $L$. Using  similar triangles, we get
			$$
			\frac{\rho}{r_L} = \frac{||x||}{||z|| - r_L},
			$$ 
			where $||\cdot||$ denotes the Euclidean norm.
			Without loss of generality, we can assume that $r_L \leq \frac12 ||z||$ for any $z \in \partial L$. If $r_L = ||z||$ for some $z$, we choose $r_L = \frac12 ||z||$. Then
			$$
			\frac{||x||}{||z||} r_L \leq \rho = \frac{||x||}{||z|| - r_L} r_L \leq 2 \frac{||x||}{||z||} r_L.
			$$
			We put  $\rho_0 =  r_L \frac{\min\limits_{x \in \partial K} ||x||}{\max\limits_{z \in \partial L} ||z||}$. The set  $B^n_2(x, \rho_0) \cap B_2^n(x - r_K N_K(x), r_K)$
			 has non-empty interior. 
			Choosing $r = \min\{\frac{\rho_0}4, \frac{r_K}4\}$ gives that
			 $$
			 B_2^n(x - r N_K(x), r) \subseteq B_2^n(x - r_K N_K(x), r_K) \subseteq K
			 $$
			 and 
			$$
			B_2^n(x - r N_K(x), r) \subseteq B^n_2(x, \rho_0) \subseteq L.
			 $$
			 This implies that $B_2^n(x - r N_K(x), r) \subseteq K \cap L$.
			\begin{figure}[h!]
				\begin{center}
					\includegraphics[width=0.7\textwidth]{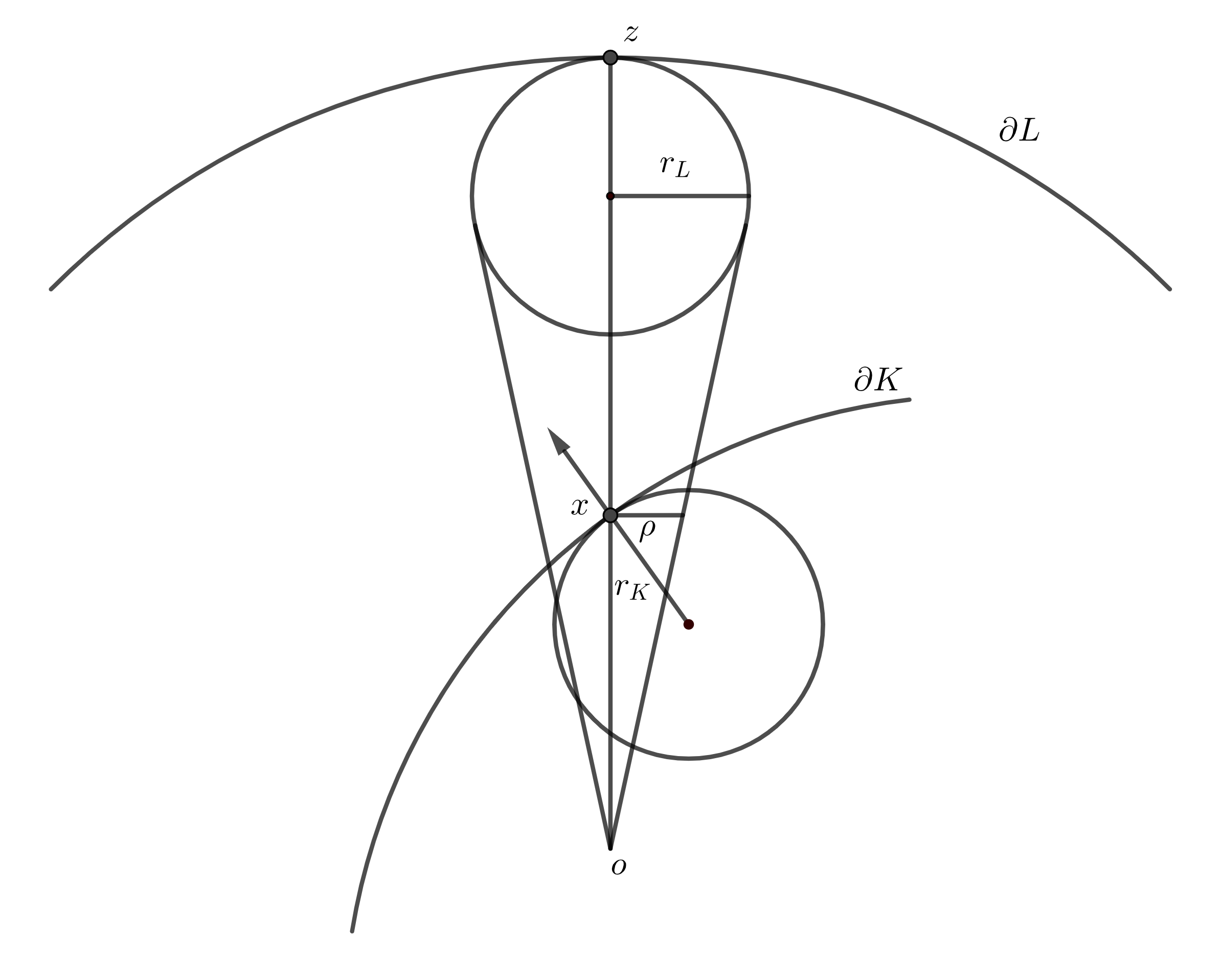}
					\caption{ Case 2a).}
					\label{Fig:r1}
				\end{center}
			\end{figure}
			\item[2b)] Now we treat the general case, when $z\in \partial L$ is not necessarily parallel to $N_L(z)$. 
			We reduce this case to the previous case.  As $x \in \text{int} (L)$,  the line segment $$[z_0, z] = \{\lambda x: \lambda \geq 0\} \cap  B_2^n(z - r_L N_L(z), r_L)$$ 
			is in the interior of $B_2^n(z - r_L N_L(z), r_L)$. Let $w$ be the midpoint of $[z_0, z]$ and let $\alpha=\dist (w, \partial (B_2^n(z - r_L N_L(z), r_L)))$ be the distance from $w$ to the boundary of $B_2^n(z - r_L N_L(z), r_L))$. Then 
			$$
			B_2^n(w, \alpha/2) \subset B_2^n(z - r_L N_L(z), r_L).
			$$
			Now we  consider the cone that is the convex hull of the origin $o$ and the  ball $B^n_2(w, \alpha/2)$.  We  then proceed as above, replacing  $z$ by $w+\alpha/2 \frac{x}{||x||}$, also noting that $||w+\alpha/2  \frac{x}{||x||}|| \leq ||z||$. 			
		\end{itemize} 
	\end{itemize}
	
	The proof that $K \cap L$ and $K \cup L$ admit an outer rolling ball is similar to the above, though slightly more technical.	
\end{proof}

\vskip 3mm
\noindent
\subsection{Special Cases} \label{SCEx}
\vskip 2mm 
The $L_p$-Steiner quermassintegrals generalize the known quermassintegrals, the dual mixed volumes and the mixed $L_p$ affine surface areas. Indeed,
\vskip 2mm
\noindent
(i) If $p=0$ and $K$ is $C^2_+$, 
\begin{equation}\label{p=0,1}
	\qv^0_k(K)  =n  \binom{n}{k} \, W_k(K).
\end{equation}

\noindent Then (\ref{p=0,1}) can be deduced immediately as follows.  From (\ref{Steiner as_p}) we get  that
\begin{equation}\label{p=0,2}
as_0(K+t B^n_2)=  \sum _{k=0}^\infty \qv^0_k(K)  \, t^k.
\end{equation}
On the other hand, by the classical Steiner formula, 
\begin{equation}\label{p=0,3}
as_0(K+t B^n_2)  = n \, \vol_n\left(K+t B^n_2\right) =  \sum _{k=0}^n \binom{n}{k} \,  \mathcal{W}_{k}(K) \, t^k.
\end{equation}
Comparing (\ref{p=0,2}) and (\ref{p=0,3}), we see that $\qv^0_k(K)=0$ for all $k>n$  and for $0 \leq k \leq n$ (\ref{p=0,1}) follows immediately.
\vskip 2mm
\noindent
(ii) If
$p = \pm \infty$ and $K$ is $C^2_+$,
we get from the definition that
$$
\qv^{\pm \infty}_k(K) = \binom{-n}{k} \widetilde{W}_{-k}(K^\circ) = (-1)^k \binom{n+k-1}{k} \widetilde{W}_{-k}(K^\circ),
$$
where $\widetilde{W}_{k}(K)$ are the dual quermassintegrals, \cite{Lu1}.
\vskip 3mm
\noindent
(iii) The $L_p$-Steiner quermassintegrals generalize the known {\it mixed $L_p$ affine surface areas }\cite{Lu, WY2010}.  We recall that 
for all $p \neq -n$ and all real $s$, the $s$-th mixed $L_p$ affine surface area of $K$ is defined  as 
\begin{equation*}
as_{p,\,s}(K) = \int\limits_{\partial K}  H_{n-1}(x)  ^{\frac{s +p}{n + p}} \langle x, N(x) \rangle ^{(1-p)\frac{n - s}{n + p} } d\mathcal{H}^{n-1}(x).
\end{equation*}
Then   we get for 
$k = l(n-1)$, $l \in \mathbb{N}$ and $p = 1$,
$$
\qv_{l(n-1)}^1(K)  = \qw_{l(n-1),\, l(n-1)}(K) = \binom{\frac n{n+1}}{l} as_{1,l(n+1)}(K).
$$
Similarly, for $m = 0$,
\begin{equation}\label{m=0}
\mathcal{W}^p_{0,\, k}(K) = \int\limits_{\partial K} \langle x, N(x) \rangle^{- k + \frac{n(1-p)}{n+p}} H_{n-1}^{\frac p{n+p}} \, d\mathcal{H}^{n-1}(x)  = as_{p+\frac kn (n+p),\, -k}(K).
\end{equation}
Note that the term $\mathcal{W}^p_{0,\, k}(K)$, which corresponds to the last summand in $\qv^p_k(K)$, is finite and positive for all $k$ and $p$ such that $\frac{p}{n + p}\geq 0$. However, for $p < 0$, it may become infinite (see Example 4.2).
When, in addition, $p = 0$ and $k = 1$, $\mathcal{W}^0_{0,\, 1}(K) $ is the surface area  $\vol_{n-1}(\partial K)$ of $K$.
\vskip 3mm
\noindent	
(iv) For the Euclidean unit ball $B^n_2$ and all  parameters $p \ne -n$ we get that 
	\begin{eqnarray} \label{unitball}
	\mathcal{V}^p_{k}(B^n_2) &=&  \sum\limits_{m = 0}^{k} \binom{\frac{n(1-p)}{n+p}}{ k-m} \mathcal{W}^p_{m,\, k}(B^n_2)\nonumber \\
	&=&
	\vol_{n-1}(\partial B^n_2) \, \sum\limits_{m = 0}^{k} \binom{\frac{n(1-p)}{n+p}}{k-m} \sum_{\substack{
				i_1, \dots, i_{n-1} \geq 0 \\
				i_1 + 2i_2 + \dots + (n-  1)i_{n-1}=m}}
		c(n, p, i(m))  \prod\limits_{j = 1}^{n - 1}  {n - 1\choose j}^{i_j}.
\end{eqnarray} 
We define
\begin{equation}\label{cpk}
C(n,p,k) = 	\sum\limits_{m = 0}^{k} \binom{\frac{n(1-p)}{n+p}}{k-m} \sum_{\substack{
				i_1, \dots, i_{n-1} \geq 0 \\
				i_1 + 2i_2 + \dots + (n-  1)i_{n-1}=m}}
		c(n, p, i(m))  \prod\limits_{j = 1}^{n - 1}  {n - 1\choose j}^{i_j}.
\end{equation} 
Then, 
\begin{equation}\label{Kugel0}
\mathcal{V}^p_{k}(B^n_2)=   C(n,p,k) \, \vol_{n-1}(\partial B^n_2).
\end{equation}
In particular, 
\begin{equation}\label{c1k}
C(n,1,k) =  \sum_{\substack{
				i_1, \dots, i_{n-1} \geq 0 \\
				i_1 + 2i_2 + \dots + (n-  1)i_{n-1}=k}}
		c(n, 1, i(k))  \prod\limits_{j = 1}^{n - 1}  {n - 1\choose j}^{i_j}
\end{equation} 
and 
\begin{equation}\label{Kugel}
\mathcal{W}_{k,\, k}(B^n_2)=  \vol_{n-1}(\partial B^n_2) \, C(n,1,k).
\end{equation}
\vskip 2mm

\vskip 3mm
\noindent
(v) Polytopes $P$ are not in $\mathcal{C}_p$ but we would like to note that for polytopes the $L_p$-Steiner quermassintegrals exhibit similar properties as the $L_p$ affine surface areas.    By (\ref{m=0}), 
$$
\qw_{0, \, k}^0(P) = \int\limits_{\partial P} \langle x, N(x) \rangle^{- k + 1}  \, d\mathcal{H}^{n-1}(x)  = as_{k,\, -k}(P), 
$$
and by definition,
\begin{equation} \label{V0-poly}
\mathcal{V}^0_{k}(P) =  \binom{1}{k} \,  \qw_{0, \, k}^0(P) = 
\left\{
\begin{aligned} & n\,  \vol_n(P)  \quad &\text{ if } \, k=0 \\ 
&   \vol_{n-1}(\partial P) \quad &\text{ if } \,  k=1 \\
& 0 \quad &\text{ if }\, \,  \,  k \geq 2.
\end{aligned}\right.
\end{equation}
For $p \neq 0$, we get for all $k \geq 0$,
\begin{equation} \label{Vp-poly}
\mathcal{V}^p_{k}(P) = 0 \,  \,  \text{ if }\, \,  0 < p \leq \infty \, \text{ and }  \,  -\infty \leq p < -n.
\end{equation}

\vskip 3mm
\noindent

\subsection{The expressions $C(n,p,k)$}

The $L_p$ affine surface area is homogeneous of degree $n \frac{n-p}{n+p}$,  i.e.,  
\begin{equation}\label{homo}
as_p( t K) = t^{n \frac{n-p}{n+p}} as_p(K).
\end{equation}
 Hence
\begin{equation} \label{comb-formula}
	as_p((1+t) B^n_2) = (1 + t)^{n \frac{n-p}{n+p}} \vol_{n-1}(\partial B^n_2) = \vol_{n-1}(\partial B^n_2) \sum_{k=0}^{\infty} {n \frac{n-p}{n+p}\choose k} \ t^k.
\end{equation}	
Observe that for $p=n$, $as_n((1+t) B^n_2) = \vol_{n-1}(\partial B^n_2)$  and that 
on the right hand side of (\ref{comb-formula}) ${0 \choose k} =0$ for all $k$,  except for $k=0$, when ${0 \choose 0} =1$.
In particular, this  means that 
\begin{equation*} 
C(n,n,k) =\left\{
\begin{aligned}
&1 \quad &\text{ if }\  k=0,\\
&0 \quad &\text{ if }\,  k \geq 1.
\end{aligned}\right.
\end{equation*}
\par
\noindent
For $p \neq n,  -n$,  applying \eqref{Steiner as_p} when $K$ is the Euclidean unit ball $B^n_2$, we get
$$
as_p(B^n_2 + tB^n_2) = \sum_{k = 0}^{\infty} \mathcal{V}_k^p(B^n_2) \, t^k.
$$
	Therefore, 
	$$
	\mathcal{V}_k^p(B^n_2) = \vol_{n-1}(\partial B^n_2) {n \frac{n-p}{n+p}\choose k}
	$$
	which implies that
	\begin{equation} \label{cpk-Binomial}
		C(n, p, k) = {n \frac{n-p}{n+p}\choose k}.
	\end{equation}
\par
\noindent	
	This observation results in the following  combinatorial formula.
\par
\noindent
\begin{cor} Let $p \in \mathbb{R}$, $p \neq -n$. Then
	\begin{equation*}
	{n \frac{n-p}{n+p}\choose k} = \sum\limits_{m = 0}^{k} \binom{\frac{n(1-p)}{n+p}}{k-m} \sum_{\substack{
			i_1, \dots, i_{n-1} \geq 0 \\
			i_1 + 2i_2 + \dots + (n-  1)i_{n-1}=m}}
	c(n, p, i(m))  \prod\limits_{j = 1}^{n - 1}  {n - 1\choose j}^{i_j}.
	\end{equation*}
\end{cor}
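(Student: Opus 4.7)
The plan is to exploit the two expressions for $\mathcal{V}^p_{k}(B^n_2)$ already assembled in Section~\ref{SCEx} and equate them. The unit ball $B^n_2$ is clearly in $\mathcal{C}^n$ (in fact $C^2_+$), so Theorem~\ref{TatarkoWerner} applies and all quantities involved are well-defined and finite.

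First, I would apply the $L_p$ Steiner formula (\ref{Steiner as_p}) to $K=B^n_2$, yielding
\begin{equation*}
as_p(B^n_2 + tB^n_2) \;=\; \sum_{k=0}^\infty \mathcal{V}^p_k(B^n_2)\, t^k.
\end{equation*}
On the other hand, $B^n_2 + tB^n_2 = (1+t)B^n_2$, and using the homogeneity (\ref{homo}) of $L_p$ affine surface area together with $as_p(B^n_2)=\vol_{n-1}(\partial B^n_2)$, I obtain the identity
\begin{equation*}
as_p((1+t)B^n_2) \;=\; \vol_{n-1}(\partial B^n_2)\, (1+t)^{n\frac{n-p}{n+p}} \;=\; \vol_{n-1}(\partial B^n_2)\sum_{k=0}^\infty \binom{n\frac{n-p}{n+p}}{k} t^k,
\end{equation*}
via the generalized binomial expansion, valid for $0 \le t < 1$ (which is within the range allowed by Theorem~\ref{TatarkoWerner} since $\min_{u\in S^{n-1}} h_{B^n_2}(u)=1$).

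Next, I would appeal to the explicit computation (\ref{unitball})--(\ref{Kugel0}), which gives
\begin{equation*}
\mathcal{V}^p_k(B^n_2) \;=\; C(n,p,k)\, \vol_{n-1}(\partial B^n_2),
\end{equation*}
where $C(n,p,k)$ is the combinatorial expression defined in (\ref{cpk}). Matching the coefficients of $t^k$ between the two expansions above, and cancelling the common factor $\vol_{n-1}(\partial B^n_2)>0$, immediately yields $C(n,p,k) = \binom{n\frac{n-p}{n+p}}{k}$, which is exactly the asserted identity.

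No serious obstacle is expected; the argument is purely a matter of equating two power series expansions of the same analytic function $t\mapsto as_p((1+t)B^n_2)$. The only small point worth verifying is the legitimacy of comparing coefficients, which follows because both series converge absolutely for $t$ in a neighbourhood of $0$ (the left-hand side by Theorem~\ref{TatarkoWerner}, the right-hand side as a standard binomial series).
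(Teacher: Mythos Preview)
Your proposal is correct and follows essentially the same argument as the paper: both compute $as_p((1+t)B^n_2)$ once via the $L_p$ Steiner formula (\ref{Steiner as_p}) and once via homogeneity (\ref{homo}) plus the binomial series, then compare coefficients after cancelling $\vol_{n-1}(\partial B^n_2)$. The paper separates the case $p=n$ beforehand, but your uniform treatment covers it as well.
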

\vskip 3mm
\noindent
We summarize some consequences for the Euclidean ball in the next corollary.
\par
\noindent
\begin{cor} \label{Kugel0+n}
 \begin{equation*} 
(i)  \hskip 4mm   \mathcal{V}^0_{k}(B^n_2) = {n \choose k} \vol_{n-1}(\partial B^n_2) =  \left\{
\begin{aligned}
& {n \choose k} \vol_{n-1}(\partial B^n_2) \quad &\text{ if }\  0 \leq k \leq n,\\
& 0 \quad &\text{ if }\,  k \geq n+1.
\end{aligned}\right.
\end{equation*}
\par
\begin{equation*} 
(ii) \hskip 4mm  \mathcal{V}^n_{k}(B^n_2) = C(n,n,k)   \, \vol_{n-1}(\partial B^n_2) = \left\{
\begin{aligned}
& \vol_{n-1}(\partial B^n_2)\quad &\text{ if }\  k=0,\\
& 0 \quad &\text{ if }\,  k \geq 1.
\end{aligned}\right.
\end{equation*}
\par
\begin{equation*} 
\hskip -50mm (iii) \hskip 4mm  
	\mathcal{V}_k^p(B^n_2) = \vol_{n-1}(\partial B^n_2) {n \frac{n-p}{n+p}\choose k}.
\end{equation*}	

\end{cor}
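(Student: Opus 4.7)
The plan is to observe that all three statements follow at once from the single identification
$$
\mathcal{V}^p_k(B^n_2) \;=\; \binom{n\frac{n-p}{n+p}}{k}\,\vol_{n-1}(\partial B^n_2),
$$
which is essentially already established in the discussion preceding the corollary, so I would first establish (iii) in full generality and then read off (i) and (ii) as specializations. The organizing principle is that both $as_p$ and $\mathcal{V}^p_k$ are nicely compatible with dilates of the unit ball, turning the $L_p$ Steiner formula into a one‑variable binomial identity.

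For part (iii) I would proceed in two cases. For $p \neq n, -n$, apply Theorem~\ref{TatarkoWerner} with $K = B^n_2$, so that $K+tB^n_2 = (1+t)B^n_2$, and combine with the homogeneity \eqref{homo} together with $as_p(B^n_2) = \vol_{n-1}(\partial B^n_2)$ to obtain
$$
\sum_{k=0}^\infty \mathcal{V}^p_k(B^n_2)\,t^k \;=\; as_p\bigl((1+t)B^n_2\bigr) \;=\; \vol_{n-1}(\partial B^n_2)\,\sum_{k=0}^\infty \binom{n\frac{n-p}{n+p}}{k} t^k,
$$
and match coefficients of $t^k$ (this is exactly the content of \eqref{Kugel0} and \eqref{cpk-Binomial}). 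The boundary case $p = n$ is handled separately: here $as_n\bigl((1+t)B^n_2\bigr) = \vol_{n-1}(\partial B^n_2)$ is constant in $t$, which matches $\binom{0}{k} = \delta_{k,0}$ from the convention \eqref{gcoef}.

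Part (i) is then just the specialization $p = 0$ of (iii), where the exponent reduces to $n\frac{n}{n} = n$, so the coefficient becomes the classical binomial $\binom{n}{k}$, which vanishes for $k \geq n+1$ by \eqref{gcoef}. Part (ii) is the specialization $p = n$ of (iii), where $n\frac{n-n}{n+n} = 0$, so the coefficient is $\binom{0}{k}$, equal to $1$ for $k=0$ and $0$ for $k \geq 1$ by the same convention.

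There is no genuine obstacle here: the corollary is a consolidation of \eqref{Kugel0} and \eqref{cpk-Binomial}, and the only thing to verify is consistency of the formulas with the generalized binomial coefficient convention \eqref{gcoef} at the two exceptional exponents $p \in \{0, n\}$, which is transparent from the computation above.
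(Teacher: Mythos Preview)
Your proposal is correct and follows essentially the same route as the paper: the corollary is indeed a direct consolidation of \eqref{Kugel0} and \eqref{cpk-Binomial}, obtained by applying the $L_p$ Steiner formula to $K=B^n_2$, using homogeneity to expand $as_p((1+t)B^n_2)$ as a binomial series, and matching coefficients, with the case $p=n$ handled separately. Your organization---proving (iii) first and then specializing to (i) and (ii)---is exactly in line with the paper's derivation.
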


\vskip 3mm
\noindent
We now analyze the expressions $C(n,p,k)$ further.  
This is also needed to determine  continuity issues of $L_p$-Steiner quermassintegrals in Section \ref{continuity}.
We start with the case $p=1$.
\noindent
\begin{prop} \label{Ckk}
	The following holds for all $n \geq 2$:
	\begin{itemize}
		\item[(i)] $C(n,1,k) > 0$ for $k \leq n-1$;
		\item[(ii)] $(-1)^{k - n+1}C(n,1,k) > 0$  for $k \geq n$.
	\end{itemize}
\end{prop}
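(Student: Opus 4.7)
The strategy is to identify $C(n,1,k)$ with a single generalized binomial coefficient via the formula derived just above this proposition, and then read off the sign from a direct factor-by-factor analysis.

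From (\ref{cpk-Binomial}) specialized to $p=1$ we have
\[
C(n,1,k) \;=\; \binom{\alpha}{k}, \qquad \alpha \;:=\; n\cdot\frac{n-1}{n+1} \;=\; \frac{n(n-1)}{n+1}.
\]
The first step is to locate $\alpha$ between consecutive integers. A direct computation gives
\[
\alpha \;=\; (n-1) - \frac{n-1}{n+1},
\]
so $n-2 < \alpha < n-1$ whenever $n\ge 2$. Moreover, since $\gcd(n,n+1)=1$ and $\gcd(n-1,n+1) \mid 2$, the fraction $\frac{n(n-1)}{n+1}$ is an integer only when $n+1 \mid 2$, i.e.\ for $n=1$; hence for $n\ge 2$ the value $\alpha$ is strictly between $n-2$ and $n-1$ and is never an integer. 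In particular, none of the factors in the numerator of $\binom{\alpha}{k}$ vanishes.

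Next, I would write out
\[
\binom{\alpha}{k} \;=\; \frac{1}{k!}\prod_{j=0}^{k-1}(\alpha-j)
\]
and count negative factors. For $k\le n-1$, every factor $\alpha-j$ with $0\le j\le k-1\le n-2$ satisfies $\alpha-j \ge \alpha-(n-2) > 0$, so the product is positive and (i) follows. For $k\ge n$, the factor $\alpha-j$ is positive exactly when $j \le n-2$ (since $\alpha < n-1$) and negative exactly when $j\ge n-1$ (since $\alpha > n-2$). Among $j=0,1,\dots,k-1$ there are therefore precisely $k-(n-1) = k-n+1$ negative factors, giving
\[
\operatorname{sgn}\binom{\alpha}{k} \;=\; (-1)^{k-n+1}, \qquad k\ge n,
\]
which is exactly (ii).

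There is essentially no obstacle here beyond the initial identification $C(n,1,k)=\binom{\alpha}{k}$, which is already handled in the excerpt preceding the proposition; the rest is an elementary sign count, with the only subtlety being the verification that $\alpha$ is not an integer (otherwise some $\alpha-j$ could be zero and $C(n,1,k)$ could vanish for large $k$, which would contradict (ii)). The number-theoretic check using $\gcd(n,n+1)=1$ dispatches this subtlety cleanly.
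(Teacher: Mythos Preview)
Your proposal is correct and follows essentially the same route as the paper: identify $C(n,1,k)=\binom{\alpha}{k}$ with $\alpha=n(n-1)/(n+1)$ via \eqref{cpk-Binomial}, then count negative factors in the numerator. The only cosmetic difference is that the paper rewrites $\alpha-j = (n-2)+\tfrac{2}{n+1}-j$ to see immediately that $\alpha-j>0$ iff $j\le n-2$, whereas you argue via $\gcd$ that $\alpha$ is non-integral and then locate it in $(n-2,n-1)$; both routes yield the same sign count.
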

\par
\noindent
\begin{proof}
	\begin{itemize}
		\item[(i)] Denote $\alpha = n\frac{n-1}{n+1}$. Using \eqref{cpk-Binomial}, 
		\begin{equation}\label{cn1k-binom}
		C(n, 1, k) = \binom{\alpha}{k} = \frac{\alpha (\alpha - 1)(\alpha - 2) \cdots (\alpha - k +1)}{k(k-1)\cdots 1}.
		\end{equation}
		It is clear that the denominator is positive. We show that the numerator is also positive when $k \leq n-1$.
		Note that $\alpha - j = n-2 + \frac2{n+1} - j$ for $1 \leq j \leq k-1$. Then $\alpha - j> 0$ when $j \leq n - 2$. Since $k \leq n - 1$, $j$ can change in the interval $1 \leq j\leq n-2$ which implies that  $C(n,1,k) >0.$
		\item[(ii)] The numerator of \eqref{cn1k-binom} consists of $k - 1$ factors $\alpha - j$ each of which is positive when $j \leq n-2$ and negative when $j \geq n-1.$ If $k \geq n$, the first $n-2$ terms of \eqref{cn1k-binom} are positive, and the remaining $k - n+1$ terms are negative which implies the statement.
	\end{itemize}
\end{proof}
\noindent
Now we address the general case.
\par
\begin{prop} \label{Cnpk}
	The following holds for all $n \geq 2$:
	\begin{itemize}
		\item[(i)] If $p < -n$, then  $(-1)^k C(n,p,k) > 0$ for $k \in \mathbb{N}$;
		\item[(ii)] If $-n < p < n - 2 + \frac2{n+1}$,  then 
		$C(n,p,k) > 0$ for all $k \leq \lfloor n -2p +\frac{2p^2}{n+p} \rfloor + 1$
		and 
		$(-1)^{k -( \lfloor n -2p +\frac{2p^2}{n+p} \rfloor + 1)}C(n,p,k)~>~0$  for $k > \lfloor n -2p +\frac{2p^2}{n+p} \rfloor + 1$;
		\item[(iii)] If $ n - 2 + \frac2{n+1} \leq p < n$, then  $(-1)^{k-1} C(n,p,k) > 0$ for $k \in \mathbb{N}$;
		\item[(iv)] If $p = n$, then  $C(n, p, 0) = 1$ and $C(n,p,k) = 0$ for $k \in \mathbb{N}$;
		\item[(v)] If $p > n$, then $(-1)^k C(n,p,k) > 0$ for $k \in \mathbb{N}$.
	\end{itemize}
\end{prop}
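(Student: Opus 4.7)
The plan is to reduce everything to counting the sign of factors in the generalized binomial coefficient $C(n,p,k) = \binom{\alpha}{k}$, where $\alpha = \alpha(p) := n\frac{n-p}{n+p}$. This identification is already established in \eqref{cpk-Binomial}, so for $k \geq 1$ we may write
\[
C(n,p,k) \;=\; \frac{1}{k!}\prod_{j=0}^{k-1}(\alpha - j),
\]
and since $k! > 0$, the sign of $C(n,p,k)$ is controlled entirely by the number of negative factors $\alpha - j$. The case $k = 0$ reduces to $\binom{\alpha}{0} = 1$ and is trivial, so I focus on $k \geq 1$.

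Next I record two algebraic identities that pin down the thresholds in the statement. A direct calculation gives
\[
n - 2p + \frac{2p^2}{n+p} \;=\; \frac{(n-2p)(n+p)+2p^2}{n+p} \;=\; \frac{n(n-p)}{n+p} \;=\; \alpha(p),
\]
so the floor $\lfloor n - 2p + \frac{2p^2}{n+p}\rfloor + 1$ in (ii) is just $\lfloor \alpha \rfloor + 1$. Similarly $n - 2 + \frac{2}{n+1} = \frac{n(n-1)}{n+1} = \alpha(1)$, and the five cases (i)--(v) in the proposition exactly partition $p$ according to where $\alpha(p)$ lies relative to $\{0,1\}$, using that $p \mapsto \alpha(p)$ is strictly decreasing on each of $(-\infty,-n)$ and $(-n,\infty)$.

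With this dictionary the five cases are mechanical. In (i) $p < -n$ forces $n-p>0$ and $n+p<0$, hence $\alpha < 0$; all $k$ factors $\alpha - j$, $j \geq 0$, are negative, giving sign $(-1)^k$. In (v) $p > n$ again forces $\alpha < 0$ by the same sign check, and the argument of (i) applies verbatim. In (iv) $p = n$ gives $\alpha = 0$ so $\binom{0}{k} = 0$ for $k \geq 1$ by \eqref{gcoef}. In (iii) $\alpha(1) \leq p < n$ forces $0 < \alpha \leq 1$, so only the factor $j = 0$ is positive and the remaining $k-1$ factors are negative, giving sign $(-1)^{k-1}$; this generalizes Proposition~\ref{Ckk}(ii) for $p = 1$. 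In (ii) $-n < p < \alpha(1)$ forces $\alpha > 1$; the factors $\alpha - j$ are positive for $j \leq \lfloor \alpha \rfloor$ and negative for $j \geq \lfloor \alpha \rfloor + 1$, so if $k \leq \lfloor \alpha \rfloor + 1$ all $k$ factors are positive while if $k > \lfloor \alpha \rfloor + 1$ there are exactly $k - \lfloor \alpha \rfloor - 1$ negative factors, yielding the stated sign $(-1)^{k - \lfloor \alpha\rfloor - 1}$.

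The only real subtlety is the edge case when $\alpha(p)$ happens to equal a positive integer, for then some $\binom{\alpha}{k}$ vanish and the strict sign claim is not quite in force. These values of $p$ form a discrete set by monotonicity of $\alpha(p)$, so the statement should be read on the complement; otherwise the proof is purely a sign-count and I do not foresee a substantive obstacle beyond this bookkeeping.
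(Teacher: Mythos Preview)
Your proof is correct and follows essentially the same route as the paper: both reduce to counting negative factors in $\binom{\alpha}{k}$ with $\alpha = n\frac{n-p}{n+p}$ via \eqref{cpk-Binomial}. Your additional observation that $n - 2p + \frac{2p^2}{n+p}$ simplifies to $\alpha$ itself, and your flagging of the integer-$\alpha$ edge case (which the paper relegates to a post-proof remark), are worthwhile clarifications not made explicit in the paper's argument.
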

\vskip 2mm
\noindent	
\begin{proof}
	Denote $\alpha = n\frac{n-p}{n+p}$. By \eqref{cpk-Binomial}, 
	\begin{equation}\label{cnpk-binom}
	C(n, p, k) = \binom{\alpha}{k} = \frac{\alpha (\alpha - 1)(\alpha - 2) \cdots (\alpha - k +1)}{k(k-1)\cdots 1}.
	\end{equation}
	\begin{itemize}
		\item[(i)] 
		Since denominator of \eqref{cnpk-binom} is positive, numerator of \eqref{cnpk-binom} determines the sign of $C(n, p ,k)$.
		If $p < -n$, then $\alpha < 0$  which implies that $\alpha - j < 0$ for all $1\leq j \leq k-1$. As there are $k$ negative terms in the numerator, we get the result.
		\item[(ii)] Here $\alpha - j > 0$ when $ j \leq  \lfloor n -2p +\frac{2p^2}{n+p} \rfloor$, and $\alpha - j < 0$ otherwise. Since $1 \leq j \leq  k-1$, if $k \leq \lfloor n -2p +\frac{2p^2}{n+p} \rfloor + 1$, then $\alpha - j > 0$ which yields that $C(n, p, k) > 0$.  If $k > \lfloor n -2p +\frac{2p^2}{n+p} \rfloor + 1$, then the first $\lfloor n -2p +\frac{2p^2}{n+p} \rfloor$ terms $\alpha - j$ are positive and the remaining $k - (\lfloor n -2p +\frac{2p^2}{n+p} \rfloor + 1)$ terms are negative which implies the result.

		\item[(iv)]  If $ p = n$, then $\alpha = 0$,  $C(n, p, 0) = \binom{0}{0} = 1$ and $C(n, p, k) = \binom{0}{k} = 0$ for $k \in \mathbb{N}$.

	\end{itemize}	
\noindent 
The proof of {\it (iii)} and {\it (v)}  is similar to proof of {\it (i)}.
\end{proof}
\vskip 2mm
\noindent
{\bf Remark}
\vskip 2mm
\noindent
In the special case when $p = \frac{n (n-l)}{n + l}$, $l \in \mathbb{N}$,  $\frac{n(n-p)}{n+p}$ is an integer.  This simplifies the binomial coefficients $\binom{\frac{n(n-p)}{n+p}}{k}$ that appear in \eqref{p-SC1}. Thus, the coefficients $C(n, p, k) > 0$ for $k \leq l$ and $C(n, p ,k) = 0$ for $k > l$.
\vskip 3mm

\section{Properties of the $L_p$-Steiner quermassintegrals}

\subsection{Homogeneity and Invariance}

	\begin{theorem}\label{VProperties}
	Let $K$ be a convex body in $\mathcal{C}^n$ and let $p \in\mathbb{R}$ be such that $ p\geq 0$ or $p < -n$. Then for all $ k \in \N$, we have
	\begin{itemize}
		\item[(i)] $\mathcal{W}^p_{m,\, k}(K)$  and $\mathcal{V}^p_k (K)$ are homogeneous of degree $n \frac{n - p}{n+p} - k$.
		\item[(ii)] $\mathcal{W}^p_{m,\, k}(K)$ and $\mathcal{V}^p_k (K)$ are invariant  under rotations and reflections.
			\end{itemize}		
	\end{theorem}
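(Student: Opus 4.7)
\medskip

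\noindent
\textbf{Proof proposal.} The plan is to verify both properties directly from the integral definition \eqref{calWp} of $\mathcal{W}^p_{m,k}$, and then deduce the statements for $\mathcal{V}^p_k$ by linearity of the sum in \eqref{p-SC1}. Since $K\in\mathcal{C}^n$ is of class $C^1$ and strictly convex, by the Alexandrov--Busemann--Feller theorem the principal curvatures $k_1,\dots,k_{n-1}$ exist $\mathcal{H}^{n-1}$-a.e.\ on $\partial K$, so all the symmetric functions $H_j$ are well-defined a.e.\ and the integrand in \eqref{calWp} makes sense.

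For (i), fix $\lambda>0$ and parametrize $\partial(\lambda K)$ as $\{\lambda x : x\in\partial K\}$. Standard dilation rules give $N_{\lambda K}(\lambda x)=N_K(x)$, hence
\[
\langle \lambda x, N_{\lambda K}(\lambda x)\rangle = \lambda \,\langle x,N(x)\rangle,
\qquad k_i^{\lambda K}(\lambda x)=\lambda^{-1}k_i^K(x),
\]
so that $H_j^{\lambda K}(\lambda x)=\lambda^{-j}H_j^K(x)$ for $j=1,\dots,n-1$, and $d\mathcal{H}^{n-1}$ on $\partial(\lambda K)$ equals $\lambda^{n-1}d\mathcal{H}^{n-1}$ on $\partial K$. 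Substituting into \eqref{calWp} and using the constraint $i_1+2i_2+\cdots+(n-1)i_{n-1}=m$ to collect the factor $\prod_j H_j^{i_j}$, the total power of $\lambda$ comes out to
\[
\Big(m-k+\tfrac{n(1-p)}{n+p}\Big)-(n-1)\tfrac{p}{n+p}-m+(n-1)=n\tfrac{n-p}{n+p}-k,
\]
as a short algebraic simplification shows. Hence $\mathcal{W}^p_{m,k}(\lambda K)=\lambda^{n(n-p)/(n+p)-k}\mathcal{W}^p_{m,k}(K)$, and since this exponent is independent of $m$, the same homogeneity passes to $\mathcal{V}^p_k$ via \eqref{p-SC1}.

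For (ii), let $T\in O(n)$ be a rotation or reflection. Then $\partial(TK)=T(\partial K)$, the outer normal transforms as $N_{TK}(Tx)=TN_K(x)$, and since $T$ is orthogonal we have $\langle Tx,N_{TK}(Tx)\rangle=\langle x,N(x)\rangle$. Moreover, the principal curvatures are intrinsic to the second fundamental form, which is preserved by rigid motions, so $k_i^{TK}(Tx)=k_i^K(x)$ and consequently $H_j^{TK}(Tx)=H_j^K(x)$ for all $j$; the Hausdorff measure $\mathcal{H}^{n-1}$ is also $O(n)$-invariant. Every factor in the integrand of \eqref{calWp} therefore matches pointwise under the change of variables $y=Tx$, yielding $\mathcal{W}^p_{m,k}(TK)=\mathcal{W}^p_{m,k}(K)$, and invariance of $\mathcal{V}^p_k$ follows immediately from \eqref{p-SC1}.

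I do not anticipate a serious obstacle: the argument is a bookkeeping of scaling exponents for (i) and a direct change of variables for (ii). The only point needing some care is that, because bodies in $\mathcal{C}^n$ are only guaranteed to be $C^1$ (and not $C^2_+$), one must justify that the identities above hold $\mathcal{H}^{n-1}$-a.e.\ on $\partial K$, which is the content of the Alexandrov--Busemann--Feller theorem already invoked to define $\mathcal{W}^p_{m,k}$; the hypotheses $p\geq 0$ or $p<-n$ enter only to guarantee finiteness through the inner/outer rolling ball bounds established in Proposition~4.1, and do not affect the scaling or invariance computations themselves.
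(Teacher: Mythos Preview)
Your proof is correct and follows essentially the same approach as the paper: both arguments are change-of-variables computations verifying the scaling and $O(n)$-invariance of every factor in the integrand of \eqref{calWp}, followed by the observation that the homogeneity exponent is independent of $m$ so the result passes to $\mathcal{V}^p_k$. The only cosmetic difference is that the paper first records a general change-of-variables formula for arbitrary invertible linear maps (their Proposition~\ref{lin_map}) and then specializes to $T=a\,\mathrm{Id}$ and $T\in O(n)$, whereas you write down the dilation and orthogonal transformation rules directly; the algebra and the resulting exponent $n\frac{n-p}{n+p}-k$ are identical.
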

\par
\noindent
{\bf Remark}
\vskip 2mm
\noindent
Property (i) of Theorem \ref{VProperties}  implies that for the Euclidean ball with radius $r$, 
\begin{equation} \label{hom-ball}
	\mathcal{V}^p_{k}(r B^n_2)= r^{n \frac{n - p}{n+p} - k} \, \mathcal{V}^p_{k}(B^n_2) = r^{n \frac{n - p}{n+p} - k} \,  \vol_{n-1}(\partial B^n_2) \, C(n,p,k).
\end{equation}
\vskip 2mm
\noindent	

\vskip 4mm
\noindent
The following corollary, for the case $p=1$, immediately follows from Theorem \ref{VProperties}. In addition to rotation- and reflection-invariance, we  also have invariance under translations. 	
\begin{cor}
	Let $K$ be a convex body in $\mathcal{C}^n$. Then for all $ k \in \N$, 
	$\qv^1_k = \qw_{k, \, k}$ are invariant under rigid motions and  homogeneous of degree $n \frac{n - 1}{n+1} - k$.
	\par
	\noindent
	\end{cor}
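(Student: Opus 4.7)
The plan is to derive this corollary essentially as an immediate consequence of Theorem \ref{VProperties} specialized to $p=1$, together with one additional short observation for translation invariance. Theorem \ref{VProperties}(i) with $p=1$ immediately gives that $\mathcal{V}^1_k = \mathcal{W}_{k,k}$ is homogeneous of degree $n\frac{n-1}{n+1}-k$, and Theorem \ref{VProperties}(ii) gives invariance under rotations and reflections. So the only remaining ingredient is invariance under translations.

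For translation invariance, I would read off the defining formula (\ref{calW1}) carefully. When $p=1$, the exponent $m-k+\frac{n(1-p)}{n+p}$ of the support-type factor $\langle x, N(x)\rangle$ equals $m-k$, and the only terms contributing to $\mathcal{W}_{k,k}$ are those with $i_1+2i_2+\cdots+(n-1)i_{n-1}=k$, i.e.\ with $m=k$. Consequently the factor $\langle x, N(x)\rangle$ is raised to the $0$-th power and disappears entirely from the integrand. What remains is
\[
\mathcal{W}_{k,k}(K)=\int_{\partial K} H_{n-1}^{1/(n+1)}\sum_{\substack{i_1,\dots,i_{n-1}\geq 0\\ i_1+2i_2+\cdots+(n-1)i_{n-1}=k}} c(n,1,i(k))\prod_{j=1}^{n-1}\binom{n-1}{j}^{i_j}H_j^{i_j}\, d\mathcal{H}^{n-1},
\]
which depends on $K$ only through its boundary $\partial K$, its principal curvatures (via the elementary symmetric functions $H_j$), and the $(n-1)$-dimensional Hausdorff measure on $\partial K$.

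Next I would use that each of these ingredients is translation invariant. Concretely, for any $v\in\mathbb{R}^n$ one has $\partial(K+v)=\partial K+v$, the principal curvatures (and hence all $H_j$) satisfy $H_j^{K+v}(x+v)=H_j^K(x)$, and $\mathcal{H}^{n-1}$ is translation invariant. Applying the change of variables $y=x+v$ in the integral over $\partial(K+v)$ therefore yields $\mathcal{W}_{k,k}(K+v)=\mathcal{W}_{k,k}(K)$. (One should check briefly that the class $\mathcal{C}^n$ is closed under translations so that the left hand side is still well defined; this is obvious from the definition of the inner and outer rolling balls.)

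I do not foresee any serious obstacle: everything in the statement except translation invariance is already packaged inside Theorem \ref{VProperties}, and the only substantive point specific to $p=1$ is that the exponent on $\langle x,N(x)\rangle$ collapses to zero, reducing $\mathcal{W}_{k,k}$ to a purely intrinsic curvature integral. Combining rotation/reflection invariance from Theorem \ref{VProperties}(ii) with the translation invariance just established gives full rigid motion invariance, completing the proof.
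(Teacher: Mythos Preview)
Your proposal is correct and matches the paper's approach: the paper states that the corollary ``immediately follows from Theorem \ref{VProperties}'' together with the additional observation that for $p=1$ one also has translation invariance, and your argument spells out exactly why---namely that the exponent on $\langle x,N(x)\rangle$ collapses to zero in (\ref{calW1}), leaving a purely curvature-based integrand. One small technical caveat: $\mathcal{C}^n$ as defined in (\ref{class1}) is a subset of $\mathcal{K}^n_o$ (bodies containing the origin), so it is not literally closed under all translations; however, since for $p=1$ the integrand in (\ref{calW1}) no longer involves $\langle x,N(x)\rangle$, the expression $\mathcal{W}_{k,k}(K+v)$ is well defined and finite for any $K\in\mathcal{C}^n$ and any $v$, so your argument goes through unchanged.
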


\subsubsection{Proof of Theorem~\ref{VProperties}}
\vskip 2mm
\noindent
To show Theorem ~\ref{VProperties}, we need the following facts (see, e.g., \cite{SW2004}).
\begin{prop}\label{lin_map}
	Let $g:\partial K\rightarrow
	\mathbb R$ be an integrable function, and $T:\mathbb R^{n}\rightarrow
	\mathbb R^{n}$ be an invertible, linear map. Then
	$$
	\int_{\partial K} g(x)\,d\mathcal{H}^{n-1}(x)
	=|\det(T)|^{-1}\, 
	\int_{\partial T(K)}\|T^{-1t}(N_{ K}(T^{-1}(y)))\|^{-1}
	g(T^{-1}(y)) \, d\mathcal{H}^{n-1}(x)
	$$
	and
	$$
	\langle T^{-1}(y,) N_K(T^{-1}y) \rangle = \langle y, N_{T(K)}(y) \rangle \, \|T^{-1t}(N_{ K}(T^{-1}(y)))\|
	$$
	for all $y \in \partial T(K)$.
\end{prop}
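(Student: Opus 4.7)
The plan is to derive both identities from two ingredients: the transformation law for the outward unit normal under an invertible linear map, and the surface-area Jacobian of the restricted map $T|_{\partial K}: \partial K \to \partial T(K)$. Since $T$ is a bi-Lipschitz diffeomorphism between the ($C^1$-rectifiable) hypersurfaces $\partial K$ and $\partial T(K)$, the classical area formula applies, and only a short chain of elementary computations remains. For a general convex body $K$ the Gauss map $N_K$ is defined only $\mathcal{H}^{n-1}$-a.e., but the singular set is $\mathcal{H}^{n-1}$-negligible so both identities make sense pointwise almost everywhere, which is all integration requires.

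First I would establish the normal transformation. At a point $x \in \partial K$ where $N_K(x)$ exists, any tangent vector $v$ to $\partial K$ satisfies $\langle v, N_K(x) \rangle = 0$, equivalently $\langle Tv, T^{-t} N_K(x) \rangle = 0$. Since $T$ maps $T_x(\partial K)$ isomorphically onto $T_{Tx}(\partial T(K))$, the vector $T^{-t} N_K(x)$ is normal to $T(K)$ at $Tx$. A short calculation shows it points outward (regardless of the sign of $\det T$, since $\langle T^{-1}T^{-t} N_K(x), N_K(x) \rangle = \|T^{-t} N_K(x)\|^2 > 0$ witnesses that $T^{-t} N_K(x)$ lies on the exterior side), so
\[
N_{T(K)}(Tx) = \frac{T^{-t} N_K(x)}{\|T^{-t} N_K(x)\|}.
\]
Setting $y = Tx$ and pairing with $y$ yields
\[
\langle y, N_{T(K)}(y) \rangle \,=\, \frac{\langle Tx, T^{-t} N_K(x) \rangle}{\|T^{-t} N_K(x)\|} \,=\, \frac{\langle x, N_K(x) \rangle}{\|T^{-t} N_K(x)\|},
\]
which, after multiplying through by $\|T^{-t} N_K(x)\|$ and recalling $x = T^{-1}y$, is exactly the second identity.

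Next I would compute the $(n-1)$-dimensional Jacobian $J(x)$ of the restriction $T|_{\partial K}$ at $x$. Pick an orthonormal basis $e_1, \dots, e_{n-1}$ of $T_x(\partial K)$ so that $(e_1, \dots, e_{n-1}, N_K(x))$ is an orthonormal frame of $\mathbb{R}^n$. Then $|\det T|$ equals the absolute value of the determinant of the matrix with columns $Te_1, \dots, Te_{n-1}, TN_K(x)$. Decomposing $TN_K(x)$ into its tangential part in $T_{Tx}(\partial T(K))$ and its component along $N_{T(K)}(Tx)$, the tangential part contributes $0$ to this determinant, and using the normal transformation above
\[
|\det T| \,=\, J(x) \cdot \bigl|\langle TN_K(x), N_{T(K)}(Tx) \rangle\bigr| \,=\, J(x) \cdot \frac{\|N_K(x)\|^2}{\|T^{-t} N_K(x)\|} \,=\, \frac{J(x)}{\|T^{-t} N_K(x)\|},
\]
so $J(x) = |\det T| \cdot \|T^{-t} N_K(x)\|$. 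Applying the area formula $\int_{\partial T(K)} \phi(y)\, d\mathcal{H}^{n-1}(y) = \int_{\partial K} \phi(Tx)\, J(x)\, d\mathcal{H}^{n-1}(x)$ to the test function $\phi(y) = g(T^{-1}y)\,/\,J(T^{-1}y)$ then rearranges into the first identity.

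The main obstacle is the Jacobian computation together with the outward-orientation check; both are elementary but require choosing the right orthonormal frame and tracking signs carefully. Non-smoothness of $\partial K$ for a general convex body is harmless, since the Gauss map $N_K$ is defined $\mathcal{H}^{n-1}$-a.e.\ on $\partial K$, the linear map $T$ preserves $\mathcal{H}^{n-1}$-null sets, and standard approximation extends the area formula from $C^1$ to Lipschitz maps between rectifiable sets.
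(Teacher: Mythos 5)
The paper states Proposition~\ref{lin_map} without proof, citing \cite{SW2004}, so there is no in-paper argument for you to match; this is simply a standard change-of-variables fact. Your proof is correct and follows the standard route: derive the transformation law $N_{T(K)}(Tx) = T^{-1t}N_K(x)/\|T^{-1t}N_K(x)\|$ for the outward unit normal, compute the $(n-1)$-dimensional surface Jacobian as $J(x) = |\det T|\,\|T^{-1t}N_K(x)\|$ by adapting an orthonormal frame $(e_1,\dots,e_{n-1},N_K(x))$ and peeling off the component of $TN_K(x)$ along $N_{T(K)}(Tx)$, and feed this into the area formula. Your orientation check is also sound: $\langle T^{-1}T^{-1t}N_K(x), N_K(x)\rangle = \|T^{-1t}N_K(x)\|^2 > 0$ shows that the pulled-back candidate normal has strictly positive component along $N_K(x)$, which for a convex body suffices to conclude it points out of $T(K)$ (independently of the sign of $\det T$). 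The closing remark that $N_K$ exists only $\mathcal{H}^{n-1}$-a.e.\ (Alexandrov) and that $T$ preserves $\mathcal{H}^{n-1}$-null sets is precisely what makes the statement meaningful for a general convex body $K$, so nothing is missing.
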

\medskip

\noindent We apply Proposition~\ref{lin_map} to $\mathcal{W}^p_{m,\, k}(K) $ and get  
\begin{eqnarray} \label{invariance}
&&\mathcal{W}^p_{m,\, k}(K) = \nonumber \\
&&  |\det(T)|^{-1} \, \int\limits_{\partial T(K)} \|T^{-1t}(N_{ K}(T^{-1}(y)))\|^{m - k -1+ \frac{n(1-p)}{n+p}} \langle y, N_{T(K)}(y) \rangle^{m - k + \frac{n(1-p)}{n+p}} H_{n-1}^{\frac p{n+p}}  (T^{-1}(y)) \nonumber \\&&\sum_{\substack{
			i_1, \dots, i_{n-1} \geq 0 \\
			i_1 + 2i_2 + \dots + (n-  1)i_{n-1}=m}}
	 c(n, p, i(m))  \prod\limits_{j = 1}^{n - 1}  {n - 1\choose j}^{i_j} H_{j}^{i_j}\left(T^{-1}(y) \right) \, d\mathcal{H}^{n-1}(y).
\end{eqnarray}	
\vskip 2mm
\noindent	
{\it (i)} Let $a \in \mathbb{R}$.   We apply (\ref{invariance}) to $T= a \, Id$. We also use that  for $y \in \partial (aK)$, 
		$$H_j(y) = \frac{H_j(T^{-1}y)}{a^j}.$$ 
	
\noindent We get	
\begin{eqnarray*}
&&\mathcal{W}^p_{m,\, k}(K) = a^{k-n\frac{n-p}{n+p}} \,  \\
 && \int\limits_{\partial aK} \langle y, N_{aK}(y) \rangle^{m - k + \frac{n(1-p)}{n+p}} H_{n-1}^{\frac p{n+p}}  (y)\sum_{\substack{
			i_1, \dots, i_{n-1} \geq 0 \\
			i_1 + 2i_2 + \dots + (n-  1)i_{n-1}=m}}
	 c(n, p, i(m))  \prod\limits_{j = 1}^{n - 1}  {n - 1\choose j}^{i_j} H_{j}^{i_j}\left(y \right) \, d\mathcal{H}^{n-1}(y) \\
	 &&= a^{k-n\frac{n-p}{n+p}} \,  \mathcal{W}^p_{m,\, k}(a K).
\end{eqnarray*}

\noindent Consequently,
$$
\mathcal{V}^p_{ k}(K) =  \sum\limits_{m = 0}^{k} \binom{\frac{n(1-p)}{n+p}}{k-m} \mathcal{W}^p_{m,\, k}(K)  = a^{k-n\frac{n-p}{n+p}} \,   \sum\limits_{m = 0}^{k} \binom{\frac{n(1-p)}{n+p}}{k-m} \mathcal{W}^p_{m,\, k}(a K) = a^{k-n\frac{n-p}{n+p}} \, \mathcal{V}^p_{k}(a K).
$$
\vskip 2mm
\noindent		
{\it (ii)}
If $T$ is a rotation or a reflection, then $|\det T|=1$, $\|T^{-1t}(N_{ K}(T^{-1}(y)))\| = \| N_{ K}(T^{-1}(y))\| = 1$ and 
for all $1 \leq j \leq n-1$, 
$$\{ H_{j}(y): y \in \partial T(K) \}= \{H_{j}\left(x \right): \,  x \in \partial K \}.$$
Thus 	
\begin{eqnarray*}
&&\mathcal{W}^p_{m,\, k}(K) =\\
 && \int\limits_{\partial T(K)} \langle y, N_{T(K)}(y) \rangle^{m - k + \frac{n(1-p)}{n+p}} H_{n-1}^{\frac p{n+p}}  (y)\sum_{\substack{
			i_1, \dots, i_{n-1} \geq 0 \\
			i_1 + 2i_2 + \dots + (n-  1)i_{n-1}=m}}
	 c(n, p, i(m))  \prod\limits_{j = 1}^{n - 1}  {n - 1\choose j}^{i_j} H_{j}^{i_j}\left(y \right) \, d\mathcal{H}^{n-1}(y) \\
	 &&= \mathcal{W}^p_{m,\, k}(T(K)),
\end{eqnarray*}
which implies
$$
\mathcal{V}^p_{ k}(K) =  \sum\limits_{m = 0}^{k} \binom{\frac{n(1-p)}{n+p}}{k-m} \mathcal{W}^p_{m,\, k}(K)  =  \sum\limits_{m = 0}^{k} \binom{\frac{n(1-p)}{n+p}}{k-m} \mathcal{W}^p_{m,\, k}(T(K)) = \mathcal{V}^p_{k}(T(K)).
$$
\vskip 2mm

\noindent 

\vskip 4mm
\noindent
{\bf Remarks}
\vskip 2mm
\noindent
1. Under the additional assumption that $K$ is $C^2_+$, the proof of the homogeneity property is an immediate consequence of 
the  homogeneity of $L_p$ affine surface area and \eqref{Steiner as_p}. Indeed,  we have
		$$
		as_p(\lambda K +t B^n_2) = as_p \left(\lambda \left(K + \frac t\lambda B^n_2\right) \right) = \lambda^{n\frac{n-p}{n+p}} as_p \left(K + \frac t\lambda B^n_2 \right) = \sum\limits_{k = 0}^\infty \qv^p_{k}(K) \lambda^{n\frac{n-p}{n+p} - k} t^k.
		$$ 
		On the other hand, using \eqref{Steiner as_p} directly, we get
		$$
		as_p(\lambda K +t B) =\sum\limits_{k = 0}^\infty \qv^p_{k}( \lambda K) t^k.
		$$
		Therefore,
		$$
		\qv^p_{k}( \lambda K) = \lambda^{n\frac{n-p}{n+p} - k} \qv^p_{k}(K).
		$$ 
\vskip 2mm
\noindent
2. We cannot expect that $\mathcal{V}^p_{k}(K), \mathcal{W}^p_{m,\, k}(K)$ are invariant under general linear transformations. 
For instance, by (\ref{invariance}), and also using (see \cite{SW2004})  that 
$$
H_{n-1}(x)^{}
=\|T^{-1t}(N_{ K}(x))\|^{n+1}|\det(T)|^{2}
H_{n-1}(T(x))^{}
$$
we have
\begin{eqnarray*}
\mathcal{V}^p_{1}(K)  &=& \frac{n(1-p)}{n+p} \mathcal{W}^p_{0,\, 1}(K) +   \mathcal{W}^p_{1,\, 1}(K)  =  \frac{n}{n+p} |\det(T)|^{-\frac{n-p}{n+p}} \, \\
&&\Bigg ((1-p) \, \int \limits_{\partial T(K)} \|T^{-1t}(N_{ K}(T^{-1}(y)))\|^{-1} \langle y, N_{T(K)}(y) \rangle^{-1 + \frac{n(1-p)}{n+p}} H_{n-1}^{\frac p{n+p}} (y)\,  d\mathcal{H}^{n-1}(y) \\
 &+& 
(n-1)  \int\limits_{\partial T(K)}  \langle y, N_{T(K)}(y) \rangle^{ \frac{n(1-p)}{n+p}} H_{n-1}^{\frac p{n+p}}  (y)  H_{1}(T^{-1}(y)) \, 
d\mathcal{H}^{n-1}(y) \Bigg)
\end{eqnarray*}
In particular, if $n=2$, then $H_{n-1} =H_{1}$ and thus 
\begin{eqnarray*}
\mathcal{V}^p_{1}(K)  &=& \frac{2(1-p)}{2+p} \mathcal{W}^p_{0,\, 1}(K) + \mathcal{W}^p_{1,\, 1}(K) = \\
 && \hskip -11mm  \frac{2}{2+p} |\det(T)|^{-\frac{2-p}{2+p}} \Bigg ( 
(1-p)\, \int \limits_{\partial T(K)} \|T^{-1t}(N_{ K}(T^{-1}(y)))\|^{-1} \langle y, N_{T(K)}(y) \rangle^{-1 + \frac{2(1-p)}{2+p}} H_{1}^{\frac 2{2+p}} (y) \, d\mathcal{H}^{1}(y) \\
 && \hskip -11mm +
|\det(T)|^{2} \int\limits_{\partial T(K)} \|T^{-1t}(N_{ K}(T^{-1}(y)))\|^{3} \langle y, N_{T(K)}(y) \rangle^{ \frac{2(1-p)}{2+p}} H_{1}^{\frac {2(p+1)}{2+p}}(y) \, d\mathcal{H}^{1}(y) \Bigg),
\end{eqnarray*}
while 
\begin{eqnarray*}
&&\mathcal{V}^p_{1}(T(K) ) = \frac{2(1-p)}{2+p} \mathcal{W}^p_{0,\, 1}(T(K))+  \mathcal{W}^p_{1,\, 1}(T(K)) =\\
&& \hskip -10mm \frac{2}{2+p} \Bigg( (1-p) \int \limits_{\partial T(K)}  \langle y, N(y) \rangle^{-1 + \frac{2(1-p)}{2+p}} H_{1}^{\frac 2{2+p}} (y) \, d\mathcal{H}^{1}(y) 
  +  \int \limits_{\partial T(K)}  \langle y, N(y) \rangle^{ \frac{2(1-p)}{2+p}} H_{1}^{\frac {2(p+1)}{2+p}} (y) \, d\mathcal{H}^{1}(y) 
\Bigg).
\end{eqnarray*}
That is, unless $T$ is an isometry, we cannot expect to have invariance.

	\subsection{Valuation property}
		
		For $p = 1$, we have the following		
		\begin{theorem}\label{WkkProperties}
			For all $ k \in \N$, $\qw_{k, \, k}$ are  valuations on the set~$\mathcal{C}^n$. 
			\end{theorem}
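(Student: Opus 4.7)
My plan is to prove the valuation identity
\[
\qw_{k,k}(K) + \qw_{k,k}(L) = \qw_{k,k}(K\cap L) + \qw_{k,k}(K\cup L)
\]
for $K, L \in \mathcal{C}^n$ with $K\cup L$ convex, so that by Proposition~\ref{int_union} all four bodies lie in $\mathcal{C}^n$ and the four quantities are finite. The starting point is the observation that, since $p=1$, the exponent $m-k+n(1-p)/(n+p)$ in~(\ref{calWp}) vanishes at the unique contributing term $m=k$; by~(\ref{calW1}) we may therefore write
\[
\qw_{k,k}(K) = \int_{\partial K} G_k(\kappa_1(x),\dots,\kappa_{n-1}(x))\,d\mathcal{H}^{n-1}(x),
\]
with $G_k$ a universal expression in $H_{n-1}^{1/(n+1)}$ and the $H_j$'s depending only on the principal curvatures $\kappa_i(x)$. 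Crucially, no factor of $\langle x, N(x)\rangle$ appears, so the integrand is a purely local function of the boundary geometry---the feature that makes the valuation property possible.

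Next I would decompose the four boundaries exactly as in the proof of Proposition~\ref{int_union}:
\[
\partial K = (\partial K \cap \operatorname{int} L)\,\sqcup\,\Gamma\,\sqcup\,(\partial K\cap L^c), \qquad \Gamma:=\partial K \cap \partial L,
\]
with the three analogous decompositions for $\partial L$, $\partial(K\cap L)$, $\partial(K\cup L)$. On each of the six pieces avoiding $\Gamma$, two of the four bodies agree in an ambient neighborhood of any point---for example, on $\partial K\cap\operatorname{int} L$ one has $K\cap L = K$ locally, so the principal curvatures of $K$ and of $K\cap L$ at such $x$ match and $G_k$ agrees pointwise. Cancelling these six off-diagonal contributions reduces the proof to the pointwise identity
\[
G_k^K(x) + G_k^L(x) = G_k^{K\cap L}(x) + G_k^{K\cup L}(x) \qquad \text{for $\mathcal{H}^{n-1}$-a.e.\ } x\in\Gamma.
\]

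To handle $\Gamma$, observe first that at each $x\in\Gamma$ all four outer normals coincide: any supporting hyperplane to $K\cup L$ at $x$ supports both $K$ and $L$, and uniqueness of outer normals in $\mathcal{C}^n$ forces $N_K(x)=N_L(x)=N_{K\cap L}(x)=N_{K\cup L}(x)$. Writing $\partial K, \partial L$ near $x$ as graphs of convex functions $f_K, f_L$ over the common tangent hyperplane (matching values and gradients at $x$), locally $\partial(K\cap L)=\operatorname{graph}\max(f_K,f_L)$ and $\partial(K\cup L)=\operatorname{graph}\min(f_K,f_L)$. For $\mathcal{H}^{n-1}$-a.e.\ $x\in\Gamma$ the Alexandrov Hessians $A:=D^2f_K(x)$, $B:=D^2f_L(x)$ exist, and I would argue that $A, B$ must be comparable in the L\"owner order. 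Indeed, incomparability gives vectors $u,v$ with $u^TAu<u^TBu$ and $v^TBv<v^TAv$, and an explicit midpoint check for $\min(u^TAu, u^TBu)$ already fails convexity for such pairs (as seen, e.g., for $\operatorname{diag}(2,4)$ vs.\ $\operatorname{diag}(4,2)$); propagating this via the second-order Taylor expansions would produce a neighborhood of $x$ on which $\min(f_K,f_L)$ is not convex, contradicting convexity of $K\cup L$. Once, say, $A\leq B$, a direct estimate of the Alexandrov second-order expansions yields $D^2\min(f_K,f_L)(x)=A$ and $D^2\max(f_K,f_L)(x)=B$, so the principal curvatures of $K\cup L, K\cap L$ at $x$ coincide with those of $K, L$, giving the required pointwise identity.

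The main obstacle is this last step: extracting from the pointwise failure of convexity of $\min$ of two incomparable quadratic forms a full neighborhood-level failure of convexity of $\min(f_K,f_L)$, while controlling the $o(|h|^2)$ Taylor error terms with only the Alexandrov regularity available in $\mathcal{C}^n$ (bodies need not be $C^2$). Everything else is a clean boundary decomposition, so the proof rests on this measure-theoretic negligibility of the ``transverse crossing'' subset of $\Gamma$.
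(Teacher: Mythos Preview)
Your proposal is correct and follows essentially the same architecture as the paper's proof: the same boundary decomposition, the same cancellation of the six off-diagonal pieces, and the same reduction to a pointwise identity on $\Gamma=\partial K\cap\partial L$. The only difference is how the pointwise identity on $\Gamma$ is obtained. The paper invokes Lemma~\ref{CurvUnionBod} (taken from \cite{Sc}) to get that $H_{n-1}^{1/(n+1)}\prod k_{i_j}^{\alpha_j}$ for $K\cup L$ and $K\cap L$ are the $\min$ and $\max$ of the corresponding expressions for $K$ and $L$, and then finishes with $a+b=\min\{a,b\}+\max\{a,b\}$. You instead re-derive the content of that lemma from scratch via the L\"owner-comparability argument, obtaining the stronger statement that the full curvature tuples of $K\cup L$ and $K\cap L$ at $x$ coincide with those of $K$ and $L$ (in some order).

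Your ``main obstacle'' is not an obstacle. If $A,B$ are incomparable, the pure quadratic $\min(\tfrac12 h^\top Ah,\tfrac12 h^\top Bh)$ fails the midpoint inequality at some pair $h_1,h_2$ by a positive amount $\varepsilon$. Now scale: since $|\min(a+r,b+s)-\min(a,b)|\le\max(|r|,|s|)$, one has $\min(f_K,f_L)(th)=t^2\min(\tfrac12 h^\top Ah,\tfrac12 h^\top Bh)+o(t^2)$, so the midpoint defect at $th_1,th_2$ is $\varepsilon t^2+o(t^2)>0$ for small $t$, contradicting convexity of $K\cup L$. Once $A\le B$, the two-sided estimate $\min(f_K,f_L)(h)-\tfrac12 h^\top Ah\in[\min(r_K(h),r_L(h)),\,r_K(h)]=o(|h|^2)$ (using $h^\top(B-A)h\ge0$) gives $D^2\min(f_K,f_L)(0)=A$ directly, and symmetrically for $\max$. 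So only Alexandrov differentiability of $f_K,f_L$ at $x$ is needed, which holds $\mathcal H^{n-1}$-a.e.\ on $\Gamma$, and your argument goes through without further regularity.
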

		\noindent More generally,  we obtain 
		\begin{theorem}\label{WpmkProperties}
			Let $p \in \mathbb{R}$ be such that $ p\geq 0$ or $p < -n$. For all $ k, m\in \N$, $\qw_{m, \, k}^p$  and $\qv^p_{k}$ are  valuations on the set~$\mathcal{C}^n$.
		\end{theorem}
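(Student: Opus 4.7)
The strategy is to reduce Theorem~\ref{WpmkProperties} to the valuation property of the individual coefficients $\qw^p_{m,k}$, since $\qv^p_k$ is by (\ref{p-SC1}) a finite linear combination of $\qw^p_{m,k}$ with coefficients depending only on $n, p, k$. Theorem~\ref{WkkProperties} is then the special case $p = 1$.

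Fix $K, L \in \mathcal{C}^n$ with $K \cup L$ convex, so by Proposition~\ref{int_union} both $K \cap L$ and $K \cup L$ lie in $\mathcal{C}^n$, and the first proposition of Section~\ref{first analysis} ensures all four quantities are finite. Introduce the pairwise disjoint pieces
$$
A_K = \partial K \cap L^c, \quad C_K = \partial K \cap \mathrm{int}(L), \quad B = \partial K \cap \partial L,
$$
and the analogous $A_L, C_L$. Then $\partial K = A_K \sqcup B \sqcup C_K$, $\partial L = A_L \sqcup B \sqcup C_L$, and up to $\mathcal{H}^{n-1}$-null sets,
$$
\partial(K \cup L) = A_K \sqcup A_L \sqcup B, \qquad \partial(K \cap L) = C_K \sqcup C_L \sqcup B.
$$
On $A_K$ the open set $L^c$ is a neighborhood of $x$, so $K \cup L$ locally coincides with $K$, and the integrand of (\ref{calWp}) for $K \cup L$ at $x$ equals that of $K$ (same normal, same principal curvatures). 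The analogous identifications hold on $A_L, C_K, C_L$.

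The main work, and the main obstacle, lies on $B$: I must show the four integrands for $K, L, K \cup L, K \cap L$ coincide $\mathcal{H}^{n-1}$-a.e.\ on $B$. At each $x \in B$, strict convexity and $C^1$-regularity of $K, L$ give unique normals; convexity of $K \cup L$ forces $N_K(x) = N_L(x)$, for otherwise the wedge formed by the two tangent halfspaces would separate $x$ from $K \cup L$ on one side. Working in tangent coordinates with this common normal, write $\partial K, \partial L$ locally as graphs $y_n = f_K(y')$, $y_n = f_L(y')$ of $C^1$ convex functions vanishing to first order at the origin; then $B$ corresponds locally to $\{f_K = f_L\}$, while $\partial(K \cup L)$ and $\partial(K \cap L)$ are graphs of $\min(f_K, f_L)$ and $\max(f_K, f_L)$, respectively.

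To match curvatures on $B$, invoke Alexandrov's theorem: each of $f_K$, $f_L$, $\min(f_K, f_L)$, $\max(f_K, f_L)$ (all convex) admits a second-order Taylor expansion $\mathcal{H}^{n-1}$-a.e. Discard a null set so every remaining $y'_0$ carries all four expansions and is a Lebesgue density point of $B$. A standard cone argument shows that if the quadratic forms of $f_K$ and $f_L$ at $y'_0$ differed, the density of $B$ at $y'_0$ would force the existence of points $y'_0 + h \in B$ with arbitrarily small $|h|$ along a direction where $(f_K - f_L)(y'_0 + h)$ is bounded below by a positive multiple of $|h|^2$, contradicting $f_K = f_L$ on $B$. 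Hence the Alexandrov quadratic forms of $f_K$ and $f_L$ agree at $y'_0$, and applying the same argument to the pairs $(f_K, \min)$ and $(f_K, \max)$ (which coincide with $f_K$ on the density-one subset $B$) shows all four expansions at $y'_0$ are identical. Since the integrand in (\ref{calWp}) depends only on $\langle x, N(x)\rangle$ and the $H_j$'s, the four integrands coincide a.e.\ on $B$.

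Summing over the three disjoint pieces of each boundary, both
$\qw^p_{m,k}(K)+\qw^p_{m,k}(L)$ and $\qw^p_{m,k}(K \cup L)+\qw^p_{m,k}(K \cap L)$ reduce to
$$
\int_{A_K} F_K + \int_{A_L} F_L + \int_{C_K} F_K + \int_{C_L} F_L + 2\int_B F_K,
$$
which establishes the valuation identity for $\qw^p_{m,k}$; linearity in (\ref{p-SC1}) transfers it to $\qv^p_k$. In the $C^2_+$ regime the curvature matching on $B$ is immediate, but in the generality of $\mathcal{C}^n$ the combination of Alexandrov second-order differentiability with the Lebesgue density argument is unavoidable and constitutes the main technical point.
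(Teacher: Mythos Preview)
Your proof is correct, but it takes a genuinely different route from the paper's. Both arguments share the same boundary decomposition and the observation that the integrands match on the open pieces $A_K, A_L, C_K, C_L$; the difference is entirely in how the common piece $B=\partial K\cap\partial L$ is handled.

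The paper does \emph{not} attempt to show that the curvatures of $K$, $L$, $K\cup L$, $K\cap L$ agree on $B$. Instead it invokes a pointwise lemma (Lemma~\ref{CurvUnionBod}, taken from Sch\"utt \cite{Sc}) asserting that, at points of $B$ where all the relevant principal curvatures exist,
\[
H_{n-1}(\partial(K\cup L),x)^{\frac{p}{n+p}}\prod k_{i_j}(\partial(K\cup L),x)^{\alpha_j}
=\min\{\,\cdot\,(K,x),\ \cdot\,(L,x)\},
\]
with the analogous $\max$ identity for $K\cap L$. The valuation identity on $B$ then follows from $\min(a,b)+\max(a,b)=a+b$. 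So the paper allows the $K$- and $L$-integrands to differ on $B$ and cancels via the min/max structure.

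You instead prove something strictly stronger on $B$: at $\mathcal H^{n-1}$-a.e.\ point of $B$ the Alexandrov second-order expansions of $f_K$, $f_L$, $\min(f_K,f_L)$, $\max(f_K,f_L)$ coincide, so all four integrands are \emph{equal} there. Your Lebesgue-density/Alexandrov argument is sound: at a density point of $B$ where two concave functions agree on $B$ and both admit second-order expansions, a nonzero difference of quadratic forms would force an open cone of nearby points where the functions differ, contradicting density one. The paper's route is shorter once one is willing to quote the min/max lemma and keeps the geometric structure visible; your route is self-contained, avoids the external lemma, and yields the sharper a.e.\ equality of curvatures on $B$, which is of independent interest.
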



\subsubsection{Proof of Theorem~\ref{WkkProperties} and Theorem~\ref{WpmkProperties}}\label{ValProof}

We need the following lemma which can be found in e.g. \cite{Sc}.

\begin{lemma}\label{CurvUnionBod}
	Let $C$ and $K$ be convex bodies in $\mathbb R^{n}$ and suppose that
	$C\cup K$ is a convex body. Then we have for all $x\in\partial C\cap\partial K$ and for all $\alpha \geq 0$ where  the principal curvatures $k_{j} (\partial(C\cup K),x)$, $k_{j}(\partial(C\cap K),x)$,
	$k_{j}(C,x)$ and $k_{j}(K,x)$ exist for all $1 \leq j \leq n-1$,
	\begin{eqnarray*}
		H_{n-1}(\partial(C\cup K),x)^{\frac{1}{n+1}} k_j (\partial(K \cup C), x)^\alpha =\min\{H_{n-1}(C,x)^{\frac{1}{n+1}} k_j (C, x)^\alpha, H_{n-1}(K,x) ^{\frac{1}{n+1}} k_j (K, x)^\alpha\}
	\end{eqnarray*}
	and
	\begin{eqnarray*}
		H_{n-1}(\partial(C\cap K),x)^{\frac{1}{n+1}} k_j (\partial(K \cap C), x)^\alpha =\max\{H_{n-1}(C,x)^{\frac{1}{n+1}} k_j (C, x)^\alpha, H_{n-1}(K,x) ^{\frac{1}{n+1}} k_j (K, x)^\alpha\}.
	\end{eqnarray*}
	\vskip 2mm
	\noindent


\noindent Moreover, for all $1 \leq i_1,\dots, i_{j} \leq n-1$, and  for all $\alpha_1, \alpha_2, \dots,  \alpha_j \geq 0$, 
\begin{eqnarray*}
	&&H_{n-1}(\partial(C\cup K),x)^{\frac{1}{n+1}} \prod_{i=1}^j k_{i_j} (\partial(K \cup C), x)^{\alpha_j} =\\
	&&\min\{H_{n-1}(C,x)^{\frac{1}{n+1}} \prod_{i=1}^j k_{i_j} (C, x)^{\alpha_j} , H_{n-1}(K,x) ^{\frac{1}{n+1}} \prod_{i=1}^j k_{i_j} (K, x)^{\alpha_j}\}
\end{eqnarray*}
and
\begin{eqnarray*}
	&&H_{n-1}(\partial(C\cap K),x)^{\frac{1}{n+1}} \prod_{i=1}^j k_{i_j} (\partial(K \cap C), x)^{\alpha_j} =\\
	&&\max\{H_{n-1}(C,x)^{\frac{1}{n+1}} \prod_{i=1}^j k_{i_j} (C, x)^{\alpha_j} , H_{n-1}(K,x) ^{\frac{1}{n+1}} \prod_{i=1}^j k_{i_j} (K, x)^{\alpha_j}\}.
\end{eqnarray*}
\end{lemma}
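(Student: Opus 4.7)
The plan is to reduce the lemma to a second-order local calculation at $x$ and then invoke monotonicity of eigenvalues. First I would observe that the existence of all principal curvatures at $x$ implies that each of the four boundaries has a unique outward normal there. Since $C, K \subseteq C \cup K$ and $x \in \partial C \cap \partial K \cap \partial(C \cup K)$, the common supporting hyperplane at $x$ forces $N_C(x) = N_K(x) = N_{C \cup K}(x) = N_{C \cap K}(x)$. Choosing local coordinates with this common normal as the last axis, the four bodies are locally epigraphs of convex functions:
\[
C \leftrightarrow f_C, \quad K \leftrightarrow f_K, \quad C \cup K \leftrightarrow \min(f_C, f_K), \quad C \cap K \leftrightarrow \max(f_C, f_K),
\]
with $f(0) = 0$ and $\nabla f(0) = 0$ for each of $f_C, f_K$.

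Existence of the principal curvatures at $x$ is equivalent to Alexandrov second-differentiability at $0$ of the corresponding graph function, yielding symmetric matrices $A_C, A_K, A_\cup, A_\cap$ on the tangent hyperplane. Because $\nabla f(0) = 0$, the principal curvatures at $x$ are precisely the eigenvalues of these matrices. The crux of the proof is to show that $A_C$ and $A_K$ must be comparable as symmetric matrices, and that $A_\cup$ equals the smaller while $A_\cap$ equals the larger. Indeed, if $A_K - A_C$ were indefinite, there would exist tangent directions $u, v$ along which $\tfrac12 y^T(A_K - A_C) y$ has opposite signs; along the ray $tu$ one has $\min(f_C, f_K) = f_C + o(t^2)$, while along $tv$ one has $\min(f_C, f_K) = f_K + o(t^2)$. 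The existence of a single matrix $A_\cup$ with $\min(f_C, f_K)(y) = \tfrac12 y^T A_\cup y + o(\|y\|^2)$ would then force $y^T A_\cup y = y^T A_C y$ on the open cone where $\min = f_C$ and $y^T A_\cup y = y^T A_K y$ on the open cone where $\min = f_K$, which by polarization on each open cone implies $A_\cup = A_C = A_K$, a contradiction. When instead $A_K \geq A_C$ (the other case being symmetric), the positive part of $f_C - f_K$ is $o(\|y\|^2)$, hence $\min(f_C, f_K) = f_C + o(\|y\|^2)$ and $\max(f_C, f_K) = f_K + o(\|y\|^2)$, giving $A_\cup = A_C$ and $A_\cap = A_K$.

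Once Hessian comparability is in place, Weyl's monotonicity of eigenvalues transfers $A_C \leq A_K$ into $k_j(C, x) \leq k_j(K, x)$ for every $j$ in the usual ordering, and hence $H_{n-1}(C, x) = \prod_j k_j(C, x) \leq H_{n-1}(K, x)$. The quantity $H_{n-1}^{1/(n+1)} k_j^\alpha$, and more generally $H_{n-1}^{1/(n+1)} \prod_{i=1}^{j} k_{i_j}^{\alpha_j}$ with $\alpha_j \geq 0$, is monotone in the principal curvatures under the componentwise order, so the value on $C \cup K$ (resp.\ $C \cap K$) coincides with the minimum (resp.\ maximum) of the values on $C$ and $K$, which is exactly the claim. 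The main obstacle is the comparability step in the middle paragraph: one must carefully argue that Alexandrov twice-differentiability of $\min(f_C, f_K)$ at a common critical point of $f_C$ and $f_K$ forces comparability of the two Hessians. The argument must also handle the semidefinite-with-kernel case, where the $o(\|y\|^2)$ error in the Alexandrov expansion is what absorbs the higher-order corrections along the kernel directions and validates $A_\cup = A_C$.
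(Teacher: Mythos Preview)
The paper does not supply its own proof of this lemma; it simply records that the result ``can be found in e.g.~\cite{Sc}'' (Sch\"utt's 1993 paper, where the special case $\alpha=0$ is established via the indicatrix of Dupin). Your argument is a correct self-contained proof and in fact yields more than the stated lemma: by showing that Alexandrov second-differentiability of $\min(f_C,f_K)$ at the common critical point forces $A_C$ and $A_K$ to be comparable in the Loewner order, and then identifying $A_{\cup}$, $A_{\cap}$ with the smaller and larger of the two, you obtain equality of the \emph{entire} second fundamental form of $\partial(C\cup K)$ (respectively $\partial(C\cap K)$) at $x$ with that of one of $C$, $K$. The displayed identities then follow at once, and so would any other curvature expression monotone in the principal curvatures. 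The comparability step via the open-cone polarization argument (if $A_K-A_C$ were indefinite, the quadratic form of $A_{\cup}$ would have to agree with both $A_C$ and $A_K$ on open cones, hence everywhere) is the crux and is sound; your handling of the semidefinite-with-kernel case through the $o(\|y\|^2)$ remainder is also correct. One point worth making explicit in a write-up: the inequality $k_j(C,x)\leq k_j(K,x)$ for each $j$ presumes a common ordering convention (e.g.\ nondecreasing) for the principal curvatures; with that understood, Weyl's monotonicity together with nonnegativity of the $k_i$ and the $\alpha_j$ gives the min/max conclusion.
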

\vskip 3mm

\vskip 2mm

\begin{theorem}\label{WkkVal}
	\par
	\noindent
	Let $p \in \mathbb{R}$ be such that $ p\geq 0$ or $p < -n$. For all $1 \leq i_1,\dots, i_{j} \leq n-1$ and $\alpha_1, \alpha_2, \dots,  \alpha_j~\geq~0$, 
	$$\int\limits_{\partial K}  H_{n-1}^{\frac 1{n+1}}(x)  \
	\prod_{i=1}^j k_{i_j}^{\alpha_j} (x) \, d\mathcal{H}^{n-1}(x)$$ 
	are valuations on the set~$\mathcal{C}^n$.
\end{theorem}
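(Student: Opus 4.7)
The plan is to verify the valuation identity
\begin{equation*}
\phi(K \cup L) + \phi(K \cap L) = \phi(K) + \phi(L),
\end{equation*}
where
\begin{equation*}
\phi(M) := \int_{\partial M} A_M(x)\, d\mathcal{H}^{n-1}(x), \qquad A_M(x) := H_{n-1}(M,x)^{1/(n+1)}\prod_{i=1}^{j} k_{i_j}(M,x)^{\alpha_j},
\end{equation*}
for any pair $K, L \in \mathcal{C}^n$ whose union is convex. By Proposition \ref{int_union}, $K\cap L$ and $K \cup L$ then lie in $\mathcal{C}^n$, so all four integrals are finite by Proposition 4.1.

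First I would decompose the four boundaries into disjoint pieces
\begin{align*}
\partial(K \cup L) &= (\partial K \cap L^c) \sqcup (\partial L \cap K^c) \sqcup (\partial K \cap \partial L),\\
\partial(K \cap L) &= (\partial K \cap \operatorname{int} L) \sqcup (\partial L \cap \operatorname{int} K) \sqcup (\partial K \cap \partial L),\\
\partial K &= (\partial K \cap L^c) \sqcup (\partial K \cap \operatorname{int} L) \sqcup (\partial K \cap \partial L),\\
\partial L &= (\partial L \cap K^c) \sqcup (\partial L \cap \operatorname{int} K) \sqcup (\partial K \cap \partial L).
\end{align*}
On each ``pure'' piece, a neighbourhood of the point in the composed body coincides locally with a neighbourhood in the corresponding original body, so Gauss map, Gauss curvature and all principal curvatures agree there; hence the integrands $A_{K\cup L}$, $A_{K\cap L}$, $A_K$, $A_L$ match pairwise on these pieces. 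Summing these pure contributions cancels, reducing the identity to a comparison of contributions on $\partial K \cap \partial L$.

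On this common boundary, Lemma \ref{CurvUnionBod} gives
\begin{align*}
A_{K \cup L}(x) &= \min\{A_K(x), A_L(x)\},\\
A_{K \cap L}(x) &= \max\{A_K(x), A_L(x)\},
\end{align*}
at every $x \in \partial K \cap \partial L$ where all four bodies admit principal curvatures. Since $\min(a,b) + \max(a,b) = a + b$, integrating pointwise over $\partial K \cap \partial L$ closes the valuation identity.

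I expect the main technical point to be ensuring that the decomposition is valid $\mathcal{H}^{n-1}$-a.e.\ and that second-order information is available $\mathcal{H}^{n-1}$-a.e.\ on $\partial K \cap \partial L$ simultaneously for all four bodies. Alexandrov--Busemann--Feller yields a full-measure set of Alexandrov points on each boundary, and the $C^1$ and strict convexity enforced by membership in $\mathcal{C}^n$ forces the outer normals of $K$ and $L$ to coincide on $\partial K \cap \partial L$ (otherwise $K \cup L$ would not be convex at such points). This compatibility of normals is what allows the pure-piece contributions to line up exactly with those of $K$ and $L$; the remaining measurability verifications are routine.
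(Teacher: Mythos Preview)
Your proposal is correct and follows essentially the same approach as the paper: decompose the four boundaries into the common piece $\partial K\cap\partial L$ and the ``pure'' pieces, cancel the pure contributions, and on $\partial K\cap\partial L$ invoke Lemma~\ref{CurvUnionBod} together with $\min(a,b)+\max(a,b)=a+b$. The paper also explicitly notes that $d\mathcal{H}^{n-1}_{C\cup K}=d\mathcal{H}^{n-1}_{C\cap K}$ on $\partial K\cap\partial C$, which is implicit in your argument.
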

\vskip 2mm
\noindent
\begin{proof}
	\noindent Since  $C$ and $K$ are in $\mathcal{C}^n$, then $C\cap K$ and $C \cup K$ are in $\mathcal{C}^n$ by Proposition~\ref{int_union}.
	To prove the valuation property, we follow the approach of \cite{Sc}.
	Let $C$ and $K$ be convex bodies in $\mathbb R^{n}$ such that
	$C\cup K$ is a convex body. As above, we decompose
	\begin{eqnarray*}
		\partial(C\cup K)
		&=&\{\partial C\cap\partial K\}\cup\{\partial C\cap K^{c}\}\cup\{C^{c}\cap \partial K\}   \\
		\partial(C\cap K)
		&=&\{\partial C\cap\partial K\}
		\cup\{\partial C\cap\operatorname{int}(K)\}\cup\{\operatorname{int}(C)\cap \partial K\}    \\
		\partial C&=&\{\partial C\cap\partial K\}\cup\{\partial C\cap K^{c}\}
		\cup\{\partial C\cap\operatorname{int}(K)\}   \\
		\partial K&=&\{\partial C\cap\partial K\}\cup\{\partial K\cap C^{c}\}
		\cup\{\partial K\cap\operatorname{int}(C)\}.
	\end{eqnarray*}
	Since all the sets (except possibly $\partial C\cap\partial K$) in the right-hand side of the above decomposition are open subsets of $\partial C$, $\partial K$, $\partial (C\cap K)$
	and $\partial(C\cup K)$,  the integrals over those sets cancel out. Below we use $d\mathcal{H}^{n-1}_K(x)$ to denote the usual surface area measure on $\partial K$ and $k_j(K,x)$ to emphasize that this is the principal curvature of $K$ at point $x$. It remains to show
	\begin{eqnarray*}
		&& \int_{\partial C\cap\partial K}H_{n-1}(\partial(C\cup K),x)^{\frac{1}{n+1}} \prod_{i=1}^j k_{i_j}(\partial(K \cup C), x)^{\alpha_j} d\mathcal{H}^{n-1}_{C\cup K}(x) \\
		&& +\int_{\partial C\cap\partial K} H_{n-1}(\partial(C\cap K),x)^{\frac{1}{n+1}} \prod_{i=1}^j k_{i_j}(\partial(K \cap C), x)^{\alpha_j} d\mathcal{H}^{n-1}_{C\cap K}(x)\\
		&&=\int_{\partial C\cap\partial K} H_{n-1}(C,x)^{\frac{1}{n+1}} \prod_{i=1}^j k_{i_j}(C, x)^{\alpha_j} d\mathcal{H}^{n-1}_{C}(x)
		+\int_{\partial C\cap\partial K} H_{n-1}(K,x)^{\frac{1}{n+1}} \prod_{i=1}^j k_{i_j}(K, x)^{\alpha_j} d\mathcal{H}^{n-1}_{K}(x).
	\end{eqnarray*}
	Please note that $d\mathcal{H}^{n-1}_{C\cup K}=d\mathcal{H}^{n-1}_{C\cap K}$ on $\partial K\cap \partial C$. 
	This holds because both measure are equal to the $(n-1)$-dimensional
	Hausdorff measure. Therefore, it is left to show
	\begin{eqnarray*}
		&& \int_{\partial C\cap\partial K}H_{n-1}(\partial(C\cup K),x)^{\frac{1}{n+1}} \prod_{i=1}^j k_{i_j}(\partial(K \cup C), x)^{\alpha_j} d\mathcal{H}^{n-1}_{C\cap K}(x) \\
		&&  +\int_{\partial C\cap\partial K} H_{n-1}(\partial(C\cap K),x)^{\frac{1}{n+1}} \prod_{i=1}^j k_{i_j}(\partial(K \cap C), x)^{\alpha_j} d\mathcal{H}^{n-1}_{C\cap K}(x)\\
		&& =\int_{\partial C\cap\partial K} H_{n-1}(C,x)^{\frac{1}{n+1}} \prod_{i=1}^j k_{i_j}(C, x)^{\alpha_j} d\mathcal{H}^{n-1}_{C}(x)
		+\int_{\partial C\cap\partial K} H_{n-1}(K,x)^{\frac{1}{n+1}} \prod_{i=1}^j k_{i_j}(K, x)^{\alpha_j} d\mathcal{H}^{n-1}_{K}(x).
	\end{eqnarray*}
	By Lemma \ref{CurvUnionBod} this is equivalent to
	\begin{eqnarray*}
		&&\int_{\partial C\cap\partial K}\min\{H_{n-1}(C,x)^{\frac{1}{n+1}} \prod_{i=1}^j k_{i_j} (C, x)^{\alpha_j} , H_{n-1}(K,x) ^{\frac{1}{n+1}} \prod_{i=1}^j k_{i_j} (K, x)^{\alpha_j}\} d\mathcal{H}^{n-1}_{C\cap K}(x)\\
		&& +\int_{\partial C\cap\partial K}\max\{H_{n-1}(C,x)^{\frac{1}{n+1}} \prod_{i=1}^j k_{i_j} (C, x)^{\alpha_j} , H_{n-1}(K,x) ^{\frac{1}{n+1}} \prod_{i=1}^j k_{i_j} (K, x)^{\alpha_j}\} 
		d\mathcal{H}^{n-1}_{C\cap K}(x)   \\
		&&=\int_{\partial C\cap\partial K} H_{n-1}(C,x)^{\frac{1}{n+1}} \prod_{i=1}^j k_{i_j}(C, x)^{\alpha_j} d\mathcal{H}^{n-1}_{C}(x)
		+\int_{\partial C\cap\partial K} H_{n-1}(K,x)^{\frac{1}{n+1}} \prod_{i=1}^j k_{i_j}(K, x)^{\alpha_j} d\mathcal{H}^{n-1}_{K}(x).
	\end{eqnarray*}
	This holds since for any real numbers $a$ and $b$, we have
	$$
	a+b=\min\{a,b\}+\max\{a,b\}.
	$$

	\vskip 2mm
\end{proof}
\vskip 3mm
\noindent
We note that in the setting of Lemma \ref{CurvUnionBod}, $N_K(x) = N_C(x) =N_{C \cap K}(x) = N_{C\cup K}(x)$. Using this observation together with Lemma \ref{CurvUnionBod} and the decomposition in the proof of Theorem \ref{WkkVal}, we get the following generalization.
\begin{theorem} \label{WpmkVal}
	\noindent
	Let $p \in \mathbb{R}$ be such that $ p\geq 0$ or $p < -n$.  For all $1 \leq i_1,\dots, i_{j} \leq n-1$ and $\alpha_1, \alpha_2, \dots,  \alpha_j~\geq~0$, 
	$$\int\limits_{\partial K} \langle x, N(x) \rangle^{m - k + \frac{n(1-p)}{n+p}}  H_{n-1}^{\frac p{n+p}}(x)  \
	\prod_{i=1}^j k_{i_j}^{\alpha_j} (x) \, d\mathcal{H}^{n-1}(x)$$ 
	are valuations on the set~$\mathcal{C}^n$.
\end{theorem}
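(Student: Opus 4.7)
The plan is to mimic verbatim the strategy used in the proof of Theorem~\ref{WkkVal}, modifying it only at the two points where the integrand in Theorem~\ref{WpmkVal} differs from the $L_1$ integrand treated there. First I would invoke Proposition~\ref{int_union} to ensure that $C \cap K$ and $C \cup K$ both lie in $\mathcal{C}^n$ whenever $C, K \in \mathcal{C}^n$ and $C \cup K$ is convex, so that the principal curvatures $k_i$, the Gauss curvature $H_{n-1}$, and the outer unit normals exist $\mathcal{H}^{n-1}$-almost everywhere on each of the four boundaries $\partial C$, $\partial K$, $\partial(C \cap K)$, $\partial(C \cup K)$.

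I would then recycle the boundary decomposition used in the proof of Theorem~\ref{WkkVal}: each of the four boundaries splits into $\partial C \cap \partial K$ together with three open pieces, and the integrals over the open pieces cancel between the two sides of the desired valuation identity. Everything therefore reduces to showing equality of the integrals over the common piece $\partial C \cap \partial K$. On that piece the first key observation, noted just before the statement, is that $N_C(x) = N_K(x) = N_{C \cap K}(x) = N_{C \cup K}(x)$ at every point where a unique normal exists. Consequently the factor $\langle x, N(x) \rangle^{m-k+\frac{n(1-p)}{n+p}}$ takes the same value on all four bodies and thus acts as a common non-negative weight that is carried through the min/max manipulation without affecting it.

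The second key observation is that the hypothesis $p \geq 0$ or $p < -n$ forces $\frac{p}{n+p} \geq 0$, and the $\alpha_j$ are non-negative by assumption. The min/max statement of Lemma~\ref{CurvUnionBod}, although written in the paper with the exponent $\frac{1}{n+1}$, rests only on the fact that the principal curvatures of $C \cup K$ are the pointwise minima and those of $C \cap K$ the pointwise maxima of the corresponding curvatures of $C$ and $K$; since raising positive numbers to non-negative powers preserves their order, the same identity holds verbatim with $\frac{1}{n+1}$ replaced by $\frac{p}{n+p}$ and with additional factors $k_{i_j}^{\alpha_j}$ inserted. Applying the pointwise identity $a + b = \min(a,b) + \max(a,b)$ to the resulting common-weight integrands on $\partial C \cap \partial K$ and integrating against $\mathcal{H}^{n-1}$ then yields the valuation identity. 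The valuation property of $\qw^p_{m,k}$ follows since it is a finite $\mathbb{R}$-linear combination of integrals of the stated form, and the valuation property of $\qv^p_k$ then follows from \eqref{p-SC1}.

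The main obstacle, modest in my opinion, is checking that the rolling ball conditions built into $\mathcal{C}^n$ genuinely supply the uniform positive upper and lower bounds on each $k_i$ needed to rule out degenerate behaviour when we raise curvatures to the exponents $\frac{p}{n+p}$ and $\alpha_j$; this was already observed in Section~\ref{gen_obs} and so the extension of Lemma~\ref{CurvUnionBod} to the required exponents is immediate. No separate convergence argument is needed because finiteness of the integrals on all four bodies is already furnished by the proposition in Section~\ref{gen_obs}.
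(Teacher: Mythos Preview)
Your proposal is correct and follows essentially the same route as the paper: the paper's own argument is just the remark that the normals coincide on $\partial C\cap\partial K$, combined with Lemma~\ref{CurvUnionBod} and the boundary decomposition from Theorem~\ref{WkkVal}. You are slightly more explicit than the paper in pointing out that Lemma~\ref{CurvUnionBod}, stated with exponent $\tfrac{1}{n+1}$, carries over verbatim to the exponent $\tfrac{p}{n+p}\ge 0$; the paper simply invokes the lemma without comment on this point.
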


\vskip 3mm

\noindent
{\bf Proof of Theorem~\ref{WkkProperties}} \ \ 
\noindent The result follows immediately from Theorem~\ref{WkkVal} as 
\begin{equation*}
\mathcal{W}_{k,\, k}(K) = \int\limits_{\partial K}  H_{n-1}^{\frac 1{n+1}}(x)  \sum_{\substack{
		i_1, \dots, i_{n-1} \geq 0 \\
		i_1 + 2i_2 + \dots + (n-  1)i_{n-1}=k}}
c(n, 1, i(k))  \prod\limits_{j = 1}^{n - 1}  {n - 1\choose j}^{i_j} H_{j}^{i_j}\left(x \right) \, d\mathcal{H}^{n-1}(x)
\end{equation*}
is a sum (up to constants) of integrals of the form 
$$
\int\limits_{\partial K}  H_{n-1}^{\frac 1{n+1}}(x)  \
\prod_{i=1}^j k_{i_j}^{\alpha_j} (x) \, d\mathcal{H}^{n-1}(x)
$$
and as the sum of valuations, it is again a valuation.

\vskip 2mm

\noindent{\bf Proof of Theorem~\ref{WpmkProperties}} 
\noindent The result follows immediately from Theorem~\ref{WpmkVal} and the fact that the linear combination of valuations is again a valuation.

\vskip 2mm

\vskip 2mm
\noindent
{\bf Remark}
\vskip 2mm
\noindent When $j = n-1$, Theorem~\ref{WkkVal} states that for any $\alpha_1, \dots, \alpha_{n-1} \geq 0$ 
$$
\int\limits_{\partial K} H^{\frac1{n+1}}_{n-1}(x) \prod_{i=1}^{n-1} k_{i}^{\alpha_i} (x) \, d\mathcal{H}^{n-1}(x)
$$
is a valuation on the set~$\mathcal{C}^n$. In particular, when $\alpha_1 = \dots = \alpha_{n-1} = \frac s{n+1}$ for any $s>0$, then
$$
\int\limits_{\partial K} H^{\frac1{n+1}}_{n-1}(x) \left(\prod_{i=1}^{n-1} k_{i} (x) \right)^{\frac s{n+1}} \, d\mathcal{H}^{n-1}(x)
= \int\limits_{\partial K} H^{\frac{s+1}{n+1}}_{n-1}(x) \, d\mathcal{H}^{n-1}(x) = as_{1, s}(K)
$$
and Theorem~\ref{WkkVal} implies that mixed affine surface areas $as_{1,s}$ are valuations.
It seems that this fact was not known before. 

\vskip 2mm


\subsection{Continuity}\label{continuity}
 
For convex bodies $K$ and $L$, their Hausdorff distance is
\begin{equation}\label{Hausdorff}
d_H(K,L)=\min\{\varepsilon: K\subset L+\eps B_2^n, L\subset K+\eps B_2^n\}.
\end{equation}
It was proved by Lutwak \cite{Lutwak96} (see also \cite{Ludwig}) that for $p\geq 1$, $L_p$ affine surface area is  an upper semi continuous functional with respect to the Hausdorff metric.
In fact, it follows from Lutwak's proof that the same holds for all $0 \leq p <1$ (aside from the case $p=0$, which is just volume and hence continuous).
For $-n < p \leq 0$, the functional is lower semi continuous as was shown by Ludwig \cite{Ludwig2010}. It was also shown there that the functional defined by \eqref{pasa} is not lower semi continuous when  $-n < p \leq 0$, and that it is lower semi continuous  for $p < -n$ whereas such a statement is not true in this range for the functional defined by \eqref{def:paffine}.
\par
\noindent
As $\mathcal{V}_0^p(K) = \mathcal{W}^p_{0,\, 0} = as_p(K)$, 
it is natural to ask about the continuity properties for the $L_p$-Steiner quermassintegrals $\mathcal{V}_k^p$ and the $L_p$ Steiner coefficients 
$\mathcal{W}^p_{m,\, k}$. 
Of course, if $p=0$, then $\mathcal{V}_0^0(K) = n \, \vol_{n}(K)$, which is continuous.
\vskip 3mm
\vskip 1mm
\noindent  We first consider the cases when the assumption $K \in \mathcal{C}^n$ is not needed. This holds for $\mathcal{W}^p_{0,\, k}$ \eqref{m=0}. As these coincide with the mixed $L_p$ affine surface areas $as_{p+\frac{k}{n} (n+p), -k}$, this shows the continuity properties of the latter, which, as far as we know, had not been stated before.

\begin{theorem}  Let $k \geq 0$.
\begin{itemize}
\item[(i)] 
For $0 \leq p \leq \infty$, $\mathcal{W}^p_{0,\, k}$ is upper semi continuous on the set $\mathcal{K}^n_o$  and for $-n < p \leq 0$,
lower semi continuous on the set $\mathcal{K}^n_o$.
\item [(ii)] The $as_{p, s}(K)$ are upper semi continuous  on the set $\mathcal{K}^n_o$ when $s <0$ and $p > -s$, and lower semi continuous  on the set $\mathcal{K}^n_o$ if $s<0$ and $-n < p < -s$.
\end{itemize}
\end{theorem}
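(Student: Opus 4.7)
The plan is to deduce (i) from (ii) via the identification in~\eqref{m=0}, and then to prove (ii) by adapting the Lutwak/Ludwig arguments that already establish semi-continuity for the classical $L_p$ affine surface area $as_p = as_{p,0}$.

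For the reduction, equation~\eqref{m=0} gives
\begin{equation*}
\mathcal{W}^p_{0,k}(K) = as_{q,-k}(K), \qquad q := p + \tfrac{k}{n}(n+p).
\end{equation*}
A direct computation yields $q - k = p(n+k)/n$, so $q > -s = k$ is equivalent to $p > 0$ and $-n < q < k$ is equivalent to $-n < p < 0$. Thus the upper and lower semi-continuity regimes of (ii) match precisely the two ranges of $p$ in (i); the boundary case $p = 0$ produces $q = k = -s$ and is handled directly ($k = 0$ gives volume and $k = 1$ gives surface area, both continuous).

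To prove (ii), pass to the sphere via the Gauss map to obtain
\begin{equation*}
as_{p,s}(K) = \int_{S^{n-1}} f_K(u)^{\alpha}\, h_K(u)^{(1-p)\alpha}\, d\sigma(u), \qquad \alpha := \frac{n-s}{n+p}.
\end{equation*}
Under $s < 0$ and $p > -s$, the inequality $n - s < n + p$ forces $0 < \alpha < 1$, so $t \mapsto t^\alpha$ is concave; under $s < 0$ and $-n < p < -s$, one has $\alpha > 1$, so the power is convex. In the concave regime, follow Lutwak's argument from \cite{Lutwak96}: for a Hausdorff-convergent sequence $K_j \to K$, apply Hölder's inequality to bound $as_{p,s}(K_j)$ above by the $\alpha$-th power of a functional continuous in $K_j$ (built from the surface area measure $dS(K_j, \cdot)$, which is weakly continuous in $K_j$, and the uniformly convergent support function $h_{K_j}$), then take the $\limsup$ and pass to equality in Hölder to recover $\limsup_j as_{p,s}(K_j) \leq as_{p,s}(K)$. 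In the convex regime, adapt Ludwig's argument from~\cite{Ludwig2010}: extract a subsequence along which $f_{K_j}$ converges almost everywhere on $S^{n-1}$, then apply Fatou's lemma to the nonnegative integrand $f_{K_j}^\alpha h_{K_j}^{(1-p)\alpha}$, using the uniform upper and lower bounds on $h_{K_j}$ that come from $K \in \mathcal{K}^n_o$ and Hausdorff convergence to control the $h_K$-factor.

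The main obstacle is the control of the density $f_K$ under Hausdorff perturbations: $f_K$ is only defined a.e.\ and does not in general depend continuously on $K$. The restrictions $s < 0$ together with the separated ranges of $p$ are chosen precisely so that $\alpha = (n-s)/(n+p)$ lies strictly in $(0,1)$ or strictly in $(1,\infty)$, placing $as_{p,s}$ on the correct side of the Hölder/Fatou dichotomy. Once this placement is established, the remaining steps run parallel to the classical case $s = 0$ already in the literature, and the only genuinely new bookkeeping is verifying that the exponent pairs $(q,-k)$ arising from (i) lie in the correct regime of (ii).
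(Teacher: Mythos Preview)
Your overall architecture differs from the paper's: the paper proves (i) directly by invoking Ludwig's general semicontinuity result for curvature integrals \cite{Ludwig}, taking $f(t)=t^{p/(n+p)}$ and handling the extra support-function weight via the uniform convergence $h_{K_j}\to h_K$; it then says (ii) is similar. You go in the opposite direction, reducing (i) to (ii) and attempting to prove (ii) from scratch. The reduction and the exponent bookkeeping are correct, and your upper-semicontinuity sketch via H\"older in the concave regime $0<\alpha<1$ is the standard Lutwak argument and works.

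The gap is in the lower-semicontinuity step. You write that one can ``extract a subsequence along which $f_{K_j}$ converges almost everywhere on $S^{n-1}$'' and then apply Fatou. This a.e.\ convergence is not available: Hausdorff convergence $K_j\to K$ gives only weak convergence of the surface area measures $S_{K_j}\to S_K$, and second derivatives of convex functions (hence curvature functions) need not converge pointwise even along subsequences. The correct mechanism in the convex regime $\alpha>1$ is the weak lower semicontinuity of the convex functional $\mu\mapsto \int (d\mu/d\sigma)^\alpha\, d\sigma$ under weak convergence of measures, e.g.\ via the dual representation
\[
\int f^\alpha\, d\sigma \;=\; \sup_{g\ge 0}\Bigl\{\alpha\!\int g\, dS_K - (\alpha-1)\!\int g^{\alpha/(\alpha-1)}\, d\sigma\Bigr\},
\]
each term of which is continuous in $K$; this is essentially what underlies Ludwig's argument in \cite{Ludwig2010}. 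Your Fatou step as written does not go through.

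A minor point: your treatment of the boundary $p=0$ covers only $k=0,1$. For general $k$ one has $\mathcal{W}^0_{0,k}(K)=\int_{S^{n-1}} h_K^{\,1-k}\, dS_K$, which is continuous by weak convergence of $S_{K}$ together with uniform convergence of $h_K$ (bounded away from zero since $0\in\operatorname{int}K$); this should be stated to close the case.
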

\begin{proof} 
	We prove part (i). Part~(ii) follows similarly.
We note that if $K_l$ converges to $K$ in the Hausdorff metric, then $h_{K_l}$ converges to $h_K$ uniformly on $S^{n-1}$. 
Using~\ref{m=0}, the proof of (i) then follows from the proof of upper semi continuity of a generalized $L_p$ affine surface area 
given in \cite{Ludwig} where we take $f(t) = t^\frac{p}{n+p}$ that satisfies the conditions in \cite{Ludwig} (since $ 0 < \frac{p}{n+p} < 1$).
\end{proof}
\vskip 3mm
\noindent
The next corollary follows immediately from this theorem and the previous sections. A characterization of upper semi continuous rigid motion invariant valuations on convex bodies in the plane was given in \cite{Ludwig2000}.
\vskip 2mm

\begin{cor}
Let $p \geq 0$. Then $\mathcal{W}^p_{0,\, k} = as_{p+\frac{k}{n} (n+p), -k}$ are upper semi continuous $n \frac{n-p}{n+p} ~-~k$ homogeneous valuations on the set $\mathcal{K}^n_o$ that are invariant under rotations and reflections. 
\end{cor}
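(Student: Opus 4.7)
The plan is to assemble this corollary by invoking four results already established in the paper, since every property listed has been proved in some earlier statement. The identity $\mathcal{W}^p_{0,k}(K)=as_{p+\frac{k}{n}(n+p),-k}(K)$ is exactly equation (\ref{m=0}), which expresses the $m=0$ Steiner coefficient as a mixed $L_p$ affine surface area by absorbing the factor $\langle x,N(x)\rangle^{-k}$ into the exponent of the support term. This identification is purely algebraic and holds on all of $\mathcal{K}_o^n$, so no extra regularity of $K$ is required.

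Next, I would read off the upper semi continuity from part (i) of the theorem immediately preceding the corollary: for $0\leq p\leq\infty$, $\mathcal{W}^p_{0,k}$ is upper semi continuous on $\mathcal{K}_o^n$. This gives semi continuity in the stated range $p\geq 0$ without any appeal to the class $\mathcal{C}^n$.

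For the homogeneity, Theorem~\ref{VProperties}(i) states that $\mathcal{W}^p_{m,k}$ is homogeneous of degree $n\frac{n-p}{n+p}-k$ for $K\in\mathcal{C}^n$. Specializing to $m=0$, the homogeneity claim follows; moreover, using the representation $\mathcal{W}^p_{0,k}=as_{p+\frac{k}{n}(n+p),-k}$, one can verify homogeneity of this degree directly on all of $\mathcal{K}_o^n$ by the scaling behavior of $\langle x,N(x)\rangle$, $H_{n-1}$, and $d\mathcal{H}^{n-1}$ under $K\mapsto\lambda K$, which is the same calculation carried out in the proof of Theorem~\ref{VProperties}(i) but with only the $m=0$ summand. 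The same observation applies to the rotation/reflection invariance in Theorem~\ref{VProperties}(ii): applying Proposition~\ref{lin_map} with $T$ an orthogonal map gives $\mathcal{W}^p_{0,k}(K)=\mathcal{W}^p_{0,k}(TK)$ on $\mathcal{K}_o^n$.

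Finally, the valuation property follows from Theorem~\ref{WpmkProperties}, which asserts that each $\mathcal{W}^p_{m,k}$ is a valuation on $\mathcal{C}^n$ for $p\geq 0$ or $p<-n$. There is essentially nothing to prove beyond citation; the only item worth flagging is that the theorem as proved uses $K\in\mathcal{C}^n$, whereas the corollary's semi continuity statement lives on the larger class $\mathcal{K}_o^n$. The natural way to phrase the corollary's proof is therefore to combine the four ingredients on their common domain of validity and note that the valuation identity $\mathcal{W}^p_{0,k}(C\cup K)+\mathcal{W}^p_{0,k}(C\cap K)=\mathcal{W}^p_{0,k}(C)+\mathcal{W}^p_{0,k}(K)$ holds whenever both sides make sense, which, together with the $\mathcal{C}^n$ case from Theorem~\ref{WpmkProperties}, delivers the claim as stated. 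The only subtle point — and the one mild obstacle — is to keep track of which property is proved on which class; otherwise the corollary is a direct synthesis of previously established results.
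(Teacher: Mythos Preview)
Your proposal is correct and mirrors the paper's own treatment: the paper simply says the corollary ``follows immediately from this theorem and the previous sections,'' and you have spelled out exactly which previous results are being cited --- equation~(\ref{m=0}) for the identification with the mixed $L_p$ affine surface area, the preceding theorem for upper semi continuity, Theorem~\ref{VProperties} for homogeneity and $O(n)$-invariance, and Theorem~\ref{WpmkProperties} for the valuation property. Your observation that the valuation and homogeneity/invariance results are stated on $\mathcal{C}^n$ while the corollary is phrased on $\mathcal{K}^n_o$ is a genuine subtlety the paper does not address explicitly; your remedy (checking the $m=0$ case directly via the same computation as in the proof of Theorem~\ref{VProperties}, and noting the valuation identity holds whenever both sides are defined) is the right way to bridge that gap.
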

\vskip 3mm
\noindent
We will repeatedly use the following example:
\newline
For $x \in \mathbb{R}^n$, let $ \|x\|_\infty = \max_{1 \leq i \leq n} |x_i|$ and 
let $B^n_\infty=\{x \in \mathbb{R}^n: \|x\|_\infty \leq 1\}$.  
For $ l \in \mathbb{N}$, we consider convex bodies $K_l$.
We describe $K_l$  for the first 
quadrant $\mathbb{R}^n_+$, and call them $K_l^+$. The other quadrants are described accordingly. Let $x_0 = (1-\frac{1}{l}, \cdots, 1-\frac{1}{l})$ and $B^n_2\left(x_0, \frac{1}{l}\right)$ be the Euclidean ball centered at $x_0$ with radius $\frac{1}{l}$. We put 
\begin{equation}\label{Beispiel}
	K_l^+ =\left(1- \frac1l\right)B^n_\infty + \frac1l B^n_2.
\end{equation}

\vskip 2mm
\noindent
{\bf Remark}
\vskip 2mm
\noindent
Let $K_l$ be as in (\ref{Beispiel}). 
Then $K_l \rightarrow B^n_\infty$ in the Hausdorff metric and $\mathcal{W}^p_{0,\, k} (B^n_\infty)=0$ for $- \infty \leq p < -n$.
Moreover,
\begin{eqnarray*}
\mathcal{W}^p_{0,\, k} (K_l) &=&\int\limits_{\partial K_l} H_{n-1}^{\frac p{n+p}}  \langle x, N(x) \rangle^{-k - n\frac{p-1}{n+p}} d\mathcal{H}^{n-1}(x) \\
&\geq& n^{\frac{1}{2}\left(-k - n\frac{p-1}{n+p}\right) } \,  l^{-n\frac{n-p}{n+p} }  \vol_{n-1}  \,  \,  (\partial B^n_2) \rightarrow \infty,
\end{eqnarray*} 
as $l \rightarrow \infty$. This shows that  $\mathcal{W}^p_{0,\, k}$ is not upper semi continuous for $- \infty \leq p < -n$. It is also not lower semi continuous 
for that $p$-range as is easily seen by taking a sequence of polytopes that converges to the Euclidean unit ball.

\vskip 3mm
\noindent
\noindent While there are  continuity properties of the $L_p$ Steiner coefficients for $m=0$, we cannot expect that any of the other $L_p$ Steiner coefficients or $L_p$-Steiner quermassintegrals have  continuity properties. 
\medskip

\noindent We note that $\mathcal{C}^n$ is not closed under the topology generated by the Hausdorff metric. Thus, when addressing continuity issues,  we consider  convergence in the Hausdorff topology involving bodies for which the $L_p$ Steiner coefficients are well-defined  but such that these bodies are not necessarily in~$\mathcal{C}^n$.

\vskip 2mm

\begin{prop} 
\item Let $k \geq 1$. Then 
$\mathcal{V}^1_{k}= \mathcal{W}_{k,\, k}$ are  neither  lower semi continuous nor upper semi continuous. 
\end{prop}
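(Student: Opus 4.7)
The plan is to disprove both semi continuities by combining two very simple test sequences whose behaviour under $\mathcal{W}_{k,\,k}$ we can already read off from earlier computations, together with the sign information of Proposition~\ref{Ckk}.

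For the first sequence, take any polytopal approximation $P_l\to B^n_2$ in the Hausdorff metric. Since $H_{n-1}$ vanishes $\mathcal{H}^{n-1}$-almost everywhere on $\partial P_l$ and the integrand in \eqref{calW1} contains the positive power $H_{n-1}^{1/(n+1)}$, we have $\mathcal{W}_{k,\,k}(P_l)=0$ (this is also recorded in \eqref{Vp-poly} for $p=1$). The limit value is $\mathcal{W}_{k,\,k}(B^n_2)=C(n,1,k)\vol_{n-1}(\partial B^n_2)$ by Corollary~\ref{Kugel0+n}(iii); by Proposition~\ref{Ckk} it is nonzero for every $k$, positive when $k\le n-1$ and of alternating sign when $k\ge n$. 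When $C(n,1,k)>0$ we therefore have $\liminf_l \mathcal{W}_{k,\,k}(P_l)=0<\mathcal{W}_{k,\,k}(B^n_2)$, which kills lower semi continuity at $B^n_2$; when $C(n,1,k)<0$ we have $\limsup_l \mathcal{W}_{k,\,k}(P_l)=0>\mathcal{W}_{k,\,k}(B^n_2)$, which kills upper semi continuity at $B^n_2$.

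For the second sequence, use the rounded cube $K_l=(1-\tfrac1l)B^n_\infty+\tfrac1l B^n_2$, which converges in Hausdorff metric to $B^n_\infty$ and for which $\mathcal{W}_{k,\,k}(B^n_\infty)=0$. On $\partial K_l$ the Gauss curvature $H_{n-1}$ vanishes everywhere except on the $2^n$ spherical caps sitting above the cube's vertices, because every lower-dimensional face of the cube produces a piece of $\partial K_l$ with at least one vanishing principal curvature. On each such cap all principal curvatures equal $l$ and the total cap area equals $l^{-(n-1)}\vol_{n-1}(\partial B^n_2)$; with these values \eqref{calW1} collapses via the definition \eqref{c1k} of $C(n,1,k)$ to
\[
\mathcal{W}_{k,\,k}(K_l)=C(n,1,k)\,\vol_{n-1}(\partial B^n_2)\,l^{\,k-n(n-1)/(n+1)}.
\]
Whenever $k\ge n-1$ the exponent is strictly positive, so this diverges to $+\infty$ when $C(n,1,k)>0$ (killing upper semi continuity at $B^n_\infty$) and to $-\infty$ when $C(n,1,k)<0$ (killing lower semi continuity at $B^n_\infty$). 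Combined with the first step this handles every $k\ge n-1$.

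The only remaining case is $1\le k\le n-2$, which is empty for $n=2$ and non-trivial only for $n\ge 3$; here $C(n,1,k)>0$, so Step~1 still disproves lower semi continuity at $B^n_2$, but Step~2 becomes inconclusive because the exponent is negative. To disprove upper semi continuity for these $k$, I would replace the rounded cube by a \emph{bumpy ball}: attach to $B^n_2$ a maximal-density packing of $N_l\sim r_l^{-(n-1)}$ external spherical bumps of radius $r_l\to 0$, and smooth the conical bridges between bumps by a Minkowski sum with a ball of radius $\varepsilon_l\ll r_l$, so that the resulting body lies in $\mathcal{C}^n$ and still Hausdorff-converges to $B^n_2$. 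On each bump all principal curvatures equal $r_l^{-1}$, giving the same per-cap contribution $C(n,1,k)\,r_l^{n(n-1)/(n+1)-k}$ as in Step~2; the bridges still carry a vanishing principal curvature and contribute nothing. Summing over the $N_l$ bumps gives a total of order $C(n,1,k)\,r_l^{-(n-1)/(n+1)-k}\to+\infty$, so $\mathcal{W}_{k,\,k}(K_l)\to+\infty$ while $\mathcal{W}_{k,\,k}(B^n_2)$ is finite, disproving upper semi continuity. The main technical obstacle is precisely this bumpy ball construction: one must verify that the bumps really can be packed at the claimed density without destroying convexity, that the smoothing step does not disturb either the convergence to $B^n_2$ or the vanishing-curvature structure on the bridges, and that the spherical-cap contribution genuinely dominates the unabsorbed remainder of $\partial B^n_2$.
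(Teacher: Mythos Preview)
Your Steps 1 and 2 coincide with the paper's proof and correctly dispose of every $k\ge n-1$ (and the lower semi continuity for $1\le k\le n-2$).

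The gap is in your bumpy-ball construction for $1\le k\le n-2$. The obstruction is not merely technical smoothing: the construction is impossible for a convex body. On a strictly convex $K$, the Gauss map $N:\partial K\to S^{n-1}$ is a bijection with Jacobian $H_{n-1}$. If a region $A\subset\partial K$ has all principal curvatures equal to $r_l^{-1}$, then $H_{n-1}\equiv r_l^{-(n-1)}$ there, and
\[
\mathcal{H}^{n-1}(A)=\int_{N(A)} H_{n-1}^{-1}\,d\sigma \le r_l^{\,n-1}\,\vol_{n-1}(S^{n-1}).
\]
Your proposal asks for $N_l\sim r_l^{-(n-1)}$ caps each of area $\sim r_l^{\,n-1}$, i.e.\ total umbilic area of order $1$; this would force the Gauss image to have measure $\sim r_l^{-(n-1)}\gg \vol_{n-1}(S^{n-1})$. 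Hence no convex body can carry those bumps, and if one takes the honest convex construction (convex hull of the small balls, then Minkowski-smooth) the spherical caps together cover $S^{n-1}$ exactly once and the total cap contribution collapses back to the rounded-cube value $C(n,1,k)\,\vol_{n-1}(\partial B^n_2)\,r_l^{\,n(n-1)/(n+1)-k}\to 0$.

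The paper circumvents this by \emph{not} making all curvatures large. It forms a body of revolution $Z_l$ (for $n=3$: rotate the $2$-dimensional rounded square $K_l$ about an axis). On the rounded-corner portion only the meridional curvature is $\sim l$, while the rotational curvature stays $\sim 1$; thus $H_{n-1}\sim l$ (not $l^{\,n-1}$), and the high-curvature region has area $\sim l^{-1}$ rather than $l^{-(n-1)}$. The integrand $H_{n-1}^{1/(n+1)}H_1\sim l^{1/(n+1)}\cdot l$ then wins against the area loss, giving $\mathcal{W}_{1,1}(Z_l)\gtrsim l^{1/(n+1)}\to\infty$ while $Z_l$ converges to a cylinder with $\mathcal{W}_{1,1}=0$. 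The paper treats $n=3,\,k=1$ explicitly and asserts that analogous revolutions handle $n\ge 4$, $1\le k<n-1$. If you want to repair your argument, this ``one large curvature, the rest bounded'' mechanism is the idea you need; the all-curvatures-equal scheme cannot beat the Gauss map constraint.
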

\begin{proof} 
It is clear that $\mathcal{W}_{k,\, k}$ is not continuous with respect to the Hausdorff metric: take a sequence of polytopes that approximates the Euclidean ball.

Let $K_l$ be as in (\ref{Beispiel}). 
Then $K_l \rightarrow B^n_\infty$ in the Hausdorff metric  and  $\mathcal{W}_{k,\, k}(B^n_\infty)=0$. Moreover, even though $K_l$ is not in $\mathcal{C}_p$, the coefficients $\mathcal{W}_{k,\, k}(K_l)$ are well-defined. Indeed, using (\ref{Kugel}), 
\begin{eqnarray}\label{Kl}
\mathcal{W}_{k,\, k}(K_l) &=&\int\limits_{\partial K_l} H_{n-1}^{\frac 1{n+1}}  \sum_{\substack{
				i_1, \dots, i_{n-1} \geq 0 \\
				i_1 + 2i_2 + \dots + (n-  1)i_{n-1}=k}}
		c(n, 1, i(k))  \prod\limits_{j = 1}^{n - 1}  {n - 1\choose j}^{i_j} H_{j}^{i_j}\left(x \right) \, d\mathcal{H}^{n-1}(x) \nonumber  \\
		&=& \int\limits_{\partial B^n_2\left(0, \frac{1}{l}\right)} H_{n-1}^{\frac 1{n+1}}  \sum_{\substack{
				i_1, \dots, i_{n-1} \geq 0 \\
				i_1 + 2i_2 + \dots + (n-1)i_{n-1}=k}}
		c(n, 1, i(k))  \prod\limits_{j = 1}^{n - 1}  {n - 1\choose j}^{i_j} H_{j}^{i_j}\left(x \right) \, d\mathcal{H}^{n-1}(x) \nonumber  \\
&=& \mathcal{W}_{k,\, k}\left( B^n_2\left(0, \frac{1}{l}\right)\right)\nonumber \\
&=& l^{k-n\frac{n-1}{n+1} }  \vol_{n-1}  \,  \,  (\partial B^n_2) \,  C(n,1,k).
\end{eqnarray} 
Let first $k \geq n$. If $k-n+1$ is even, then we have by Proposition  \ref{Ckk} that $C(n,1,k) >0$.  Therefore, as $k-n \frac{n-1}{n+1} >0$,
$$
\mathcal{W}_{k,\, k}(K_l)= l^{k-n\frac{n-1}{n+1} }  \vol_{n-1}  \,  \,  (\partial B^n_2) \,  C(n,1,k) \to \infty, 
$$
as $l \to \infty$, but $\mathcal{W}_{k,\, k}(B^n_\infty)=0$. Thus $\mathcal{W}_{k,\, k}$ is not upper semi continuous in this case.
$\mathcal{W}_{k,\, k}$ is also not lower semi continuous. We take a sequence $P_l$ of polytopes that converges to $B^n_2$
in the Hausdorff metric.
Then $\mathcal{W}_{k,\, k}(P_l)=0$ for all $l$ but $\mathcal{W}_{k,\, k}(B^n_2)>0$.
\par
\noindent
Let now $k \geq n$ be such that  $k-n+1$ is odd. Then $C(n,1,k) < 0$ by Proposition  \ref{Ckk} and thus by 
 (\ref{unitball}), $\mathcal{W}_{k,\, k}(B^n_2)= \vol_{n-1}(\partial B^n_2) \, C(n,1,k) <0$. 
As $\mathcal{W}_{k,\, k}(P_l)=0$ for all $l$, where $P_l$ is a  sequence of polytopes  that converges to $B^n_2$ in the Hausdorff metric, 
this shows that $\mathcal{W}_{k,\, k}$ is not lower semi continuous in this case.
$\mathcal{W}_{k,\, k}$ is also not upper semi continuous. Indeed, $K_l \to B^n_\infty$ as $l \to \infty$ in the Hausdorff metric and
$\mathcal{W}_{k,\, k}(B^n_\infty)=0$. But 
$$
\mathcal{W}_{k,\, k}(K_l)= l^{k-n\frac{n-1}{n+1} }  \vol_{n-1}  \,  \,  (\partial B^n_2) \,  C(n,1,k) \to  -\infty.
$$
This settles negatively the semi continuity of $\mathcal{W}_{k,\, k}$  for all $n \geq 2$ and all $k \geq n$.
\vskip 2mm
\noindent
Let now $k \leq n-1$. By (\ref{unitball}), we have that $\mathcal{W}_{k,\, k}(B^n_2)= \vol_{n-1}(\partial B^n_2) \, C(n,1,k)>0$. 
As $\mathcal{W}_{k,\, k}(P) =0$ for every polytope $P$, and as by 
Proposition  \ref{Ckk},   $ C(n,1,k) >0$ for all $n \geq 2$,  this shows that $\mathcal{W}_{k,\, k}$ is not lower semi continuous in this  $k$-range.
Moreover, by Proposition \ref{Ckk},  for every $n\geq 2$,   $C(n,1,n-1) >0$.  
Therefore,  for all $n \geq 2$
$$
\mathcal{W}_{n-1,\, n-1}(K_l)  = l^{\frac{n-1}{n+1} } \, \vol_{n-1}  \,  \,  (\partial B^n_2) \,  C(n,1,n-1) \rightarrow  \infty,
$$ 
as $l \rightarrow \infty$ and  thus $\mathcal{W}_{n-1,\, n-1}$ is also not  upper  semi continuous, which settles 
negatively the semi continuity of $\mathcal{W}_{k,\, k}$  for all $n \geq 2$ and all $k \geq n-1$.
In particular, the case $n=2$ is settled for all $k$.
\vskip 2mm
\noindent
To settle upper semi continuity for all dimensions and all $k< n-1$  requires further examples. 
\medskip

\noindent For instance, for $n=3$,  only upper semi continuity for $k=1$ is not yet settled. To resolve this, we consider
$$
\mathcal{W}_{1,\, 1}(K)  = \frac{3}{2} \int_{\partial K} H_2(x) ^\frac{1}{4} \ H_1(x) \ d\mathcal{H}^{2}(x).
$$
We place the body $K_l$ in the $xz$-plane and we let $Z_l \subset \mathbb{R}^3$ be the  body of revolution  obtained by rotating  $K_l $ about the $z$-axis.
Let $x \in \partial Z_l$. Then the maximal principle curvature $\kappa_1(x) = l$ and  the minimal principle curvature $\kappa_2(x) \geq 1$. Therefore, 
$$
 H_2(x) ^\frac{1}{4} \geq l^\frac{1}{4} \hskip 3mm \text{and} \hskip 3mm H_1(x) \geq \frac{1}{2} \left( l +1\right)
 $$
and consequently
$$
\mathcal{W}_{1,\, 1}(Z_l)  \geq  \frac{3}{4} l^\frac{1}{4}   \left( l +1\right)  \int_{\partial Z_l}  d\mathcal{H}^{2}(x) \geq  3\pi l^\frac{1}{4}   \left( l +1\right) \frac{1-\frac{1}{l}}{l}.
$$
Thus $\mathcal{W}_{1,\, 1}(Z_l) \to \infty$, as $l \to \infty$, but $Z_l$ converges in the Hausdorff metric to the cylinder $Z$ of height $2$ and a $2$-dimensional Euclidean unit ball as base and $\mathcal{W}_{1,\, 1}(Z)=0$. Thus $\mathcal{W}_{1,\, 1}$ is not upper semi continuous and this settles $n=3$ for all $k$.
\par
\noindent
Modifications of these examples  show that $\mathcal{W}_{k,\, k}$ is not upper semi continuous for all $n \geq 4$ and all $1 \leq k< n-1$.
\par

\end{proof}

\vskip 3mm
\noindent
Now we treat general parameters $p$ such that $\frac{p}{n+p} \geq 0$.   $\mathcal{V}_{0}^0 (K)= n\,  \vol_{n}(K)$  and  $\mathcal{V}_{1}^0(K) =c(n) \vol_{n-1}(\partial K)$, where $c(n)$ is a constant depending only on $n$.
Thus, $\mathcal{V}_{0}^0$ and $\mathcal{V}_{1}^0$ are continuous. 
Moreover, $\mathcal{V}_{0}^p (K)= as_p(K)$ which, as noted above, is upper semi continuous when $p\geq 0$
and  lower semi continuous for  $-n < p \leq 0$.
\vskip 2mm
\noindent
In general however, we do not have any continuity property.
\par
\noindent
\begin{prop} 
Let $- \infty < p < -n$ or $0 \leq p < \infty$, and let $k \geq 1$. Let $K$ be such that $\mathcal{V}_{k}^p$ are well-defined. Then 
$\mathcal{V}_{k}^p$ are  in general neither  lower semi continuous nor upper semi continuous. 
\end{prop}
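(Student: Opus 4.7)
The plan is to follow the strategy of the preceding proposition (the case $p=1$): use two families of convex bodies converging in the Hausdorff metric, combined with the sign information for $C(n,p,k)$ from Proposition~\ref{Cnpk}, to rule out both semi continuity properties.

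First I would take a sequence of polytopes $P_l \to B^n_2$. By \eqref{Vp-poly}, $\mathcal{V}_k^p(P_l) = 0$ for every $l$ and for $p$ in the stated ranges, while by Corollary~\ref{Kugel0+n}(iii), $\mathcal{V}_k^p(B^n_2) = C(n,p,k)\,\vol_{n-1}(\partial B^n_2)$, which is nonzero in the generic case by Proposition~\ref{Cnpk}. Thus, if $C(n,p,k) > 0$ then $\liminf_l \mathcal{V}_k^p(P_l) < \mathcal{V}_k^p(B^n_2)$ and lower semi continuity fails, while if $C(n,p,k) < 0$ then $\limsup_l \mathcal{V}_k^p(P_l) > \mathcal{V}_k^p(B^n_2)$ and upper semi continuity fails.

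Next I would use the rounded-cube sequence $K_l = (1-1/l) B^n_\infty + (1/l) B^n_2$ from \eqref{Beispiel}, which converges to $B^n_\infty$. Since $B^n_\infty$ is a polytope, $\mathcal{V}_k^p(B^n_\infty) = 0$ by \eqref{Vp-poly}. I would decompose $\partial K_l$ into the flat pieces inherited from the cube, the lower-dimensional rounded pieces (edges and higher-dimensional facial roundings), and the $2^n$ vertex corners, each a $2^{-n}$-fraction of a sphere of radius $1/l$. On every part except the vertex corners, at least one principal curvature vanishes, so $H_{n-1}=0$ and the integrand is killed by the factor $H_{n-1}^{p/(n+p)}$ (using $p/(n+p)>0$, which holds for $p>0$ and for $p<-n$). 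On a vertex corner, $H_j = l^j$ for all $j$ and $\langle x, N(x)\rangle$ stays bounded away from $0$ and $\infty$ uniformly in $l$. Extracting the leading power of $l$, one obtains
\[
\mathcal{V}_k^p(K_l) \;=\; c(n,p,k)\, l^{\,k - (n-1)n/(n+p)} \,(1+o(1)),
\]
with a coefficient $c(n,p,k)$ whose sign can be matched to that of $C(n,p,k)$ by applying the same computation to the sphere $B^n_2(0,1/l)$ and invoking Corollary~\ref{Kugel0+n}(iii). Whenever $k - (n-1)n/(n+p) > 0$ --- which is always the case for $p<-n$ and holds for all sufficiently large $p>0$ --- the quantity $\mathcal{V}_k^p(K_l)$ diverges to $+\infty$ or $-\infty$ according to the sign of $C(n,p,k)$, and combined with $\mathcal{V}_k^p(B^n_\infty) = 0$ this rules out the semi continuity direction left open by the polytope argument.

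The principal obstacle is the sign tracking in the second step: one has to verify that the lower-order terms ($m<k$) in the expansion of $\mathcal{V}_k^p(K_l)$ cannot cancel the leading contribution in a way that flips its sign away from that of $C(n,p,k)$. In the regimes where the exponent $k - (n-1)n/(n+p) \leq 0$ (small positive $p$ and small $k$), the cube example is inconclusive, and I would then complement the argument by a different construction --- for instance a rescaled rounded cube $r B^n_\infty$ with $r$ large before rounding, or a rotationally symmetric body as in the treatment of $n=3$, $k=1$ in the preceding proposition --- in order to still produce divergence in the remaining parameter ranges.
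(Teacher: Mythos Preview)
Your overall strategy --- the polytope sequence $P_l\to B^n_2$ for one direction and the rounded cube $K_l$ of \eqref{Beispiel} for the other --- is exactly the paper's. The gap is in the second step, where you claim that the leading coefficient of $\mathcal{V}_k^p(K_l)$ has the sign of $C(n,p,k)$ by comparison with the small ball $B^n_2(0,1/l)$. That comparison worked in the preceding proposition because for $p=1$ the factor $\langle x,N(x)\rangle$ is absent and the corner contribution is literally equal to $\mathcal{W}_{k,k}(B^n_2(0,1/l))$. For general $p$ it fails: on the vertex corners of $K_l$ one has $\langle x,N(x)\rangle=(1-1/l)\sum_i\xi_i+1/l$, bounded between $1$ and $\sqrt{n}$ uniformly in $l$, whereas on $\partial B^n_2(0,1/l)$ one has $\langle x,N(x)\rangle=1/l$. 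As a result, for the small ball every term $0\le m\le k$ carries the \emph{same} power of $l$ and the sum collapses to $C(n,p,k)$; for $K_l$ the $m$th term carries $l^{\,m-n(n-1)/(n+p)}$, so the $m=k$ term alone dominates and its coefficient is (a positive multiple of)
\[
F_k(p)=\sum_{\substack{i_1,\dots,i_{n-1}\ge 0\\ i_1+2i_2+\dots+(n-1)i_{n-1}=k}} c(n,p,i(k))\prod_{j=1}^{n-1}\binom{n-1}{j}^{i_j},
\]
not $C(n,p,k)$. These two quantities need not have the same sign, so your shortcut does not determine the direction of divergence.

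The paper handles this by writing out $h_{K_l}$ explicitly (equation \eqref{support}), isolating the dominant term, and then tracking the sign of $F_m(p)$ case by case (it computes $F_1,F_2,F_3$ for $p<-n$). Even the paper does not carry this through for all $k$ and $p$ --- it notes that $F_m$ stops being sign-definite from $m=4$ on --- which is consistent with the proposition's qualifier ``in general''. So the obstacle you flagged is real, but it is the sign of $F_k(p)$ (the leading coefficient itself) that has to be analysed, not potential cancellation from the lower-order terms $m<k$.
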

\begin{proof} 
If $p=0$ and $k \geq 2$, then by (\ref{V0-poly}),  $\mathcal{V}_{k}^0(P)=0$ for every polytope $P$ and by Corollary \ref{Kugel0+n}, 
$\mathcal{V}^0_{k}(B^n_2) = {n \choose k} \vol_{n-1}(\partial B^n_2)$ for $2 \leq k \leq n$. Thus, taking a sequence $P_l$ of polytopes that converges to $B^n_2$, this shows that $\mathcal{V}^0_{k}$ is not  lower semi continuous for  $2 \leq k\leq n$.
\newline
If $p\neq 0$, then by \eqref{Vp-poly},  $\mathcal{V}_{k}^p(P)=0$ for every polytope $P$ and if $p <-n$ or $p >n$ then  by (\ref{Kugel0}) and Proposition \ref{Cnpk}, 
\begin{equation*}
\mathcal{V}^p_{k}(B^n_2) = C(n,p,k) \, \vol_{n-1}(\partial B^n_2) \left\{
\begin{aligned} & < 0  \quad &\text{ if } \,  k  \, &\text{ is odd } \\ 
&   >0  \quad &\text{ if } \,  k  \, &\text{ is even }.
\end{aligned}\right.
\end{equation*}
Taking a sequence $P_l$ of polytopes that converges to $B^n_2$, shows that for $-\infty \leq p <-n$ or $n <p \leq \infty$, $\mathcal{V}^p_{k}$ is not upper semi continuous when $k$ is odd
and not lower semi continuous when $k$ is even. Similarly, absence of upper  resp. lower semi continuity can be determined for the other $p$-ranges, using Proposition \ref{Cnpk}.
\par
\noindent
For the semi continuity issues not yet settled, we will now use the convex bodies  $K_l$, $l \in \mathbb{N}$  given by (\ref{Beispiel}). To determine $\mathcal{V}^p_{k}(K_l)$,  it is enough  in this case to 
consider $B_l=\partial K_l \cap \mathbb{R}^n_+ \cap \partial B^n_2(x_0, \frac{1}{l})$, where $x_0= \left(1-\frac{1}{l}\right) ( 1, 1, \cdots, 1)$. 
Note that for $x= x_0+ \frac{1}{l} \xi \in B_l$, $\xi \in S^{n-1}_+$, we have
\begin{equation}\label{support}
\langle x, N(x)\rangle = h_{K_l}(\xi)= \left(1-\frac{1}{l}\right) \sum_{i=1}^n \xi_i + \frac{1}{l}.
\end{equation}
Then
\begin{eqnarray*}
\mathcal{V}_{k}^p(K_l)
&=& 2^n  \sum\limits_{m = 0}^{k} \binom{\frac{n(1-p)}{n+p}}{k-m} \int\limits_{S^{n-1}_+} 
h_{K_l} ^{m - k + \frac{n(1-p)}{n+p}} \, \frac{l^{(n-1) \frac p{n+p}+m}}{l^{n-1}} \nonumber \\
&& \sum_{\substack{
			i_1, \dots, i_{n-1} \geq 0 \nonumber \\
			i_1 + 2i_2 + \dots + (n-  1)i_{n-1}=m}}
	 c(n, p, i(m)) \prod\limits_{j = 1}^{n - 1}  {n - 1\choose j}^{i_j} \, d\mathcal{H}^{n-1} \nonumber \\
&=&	 2^n  \sum\limits_{m = 0}^{k}   l^{-\frac{n(n-1)}{n+p}+m} \binom{\frac{n(1-p)}{n+p}}{k-m} \sum_{\substack{
			i_1, \dots, i_{n-1} \geq 0 \nonumber \\
			i_1 + 2i_2 + \dots + (n-  1)i_{n-1}=m}}
	 c(n, p, i(m)) \prod\limits_{j = 1}^{n - 1}  {n - 1\choose j}^{i_j} \,    \\
	 && \int\limits_{S^{n-1}_+} \left(\left(1-\frac{1}{l}\right) 
\sum_{i=1}^n \xi_i + \frac{1}{l} \right) ^{m - k + \frac{n(1-p)}{n+p}}  \, d\mathcal{H}^{n-1},	
	\end{eqnarray*}
where we used \eqref{LpSteiner-Ein} and \eqref{support} in the first equality. This shows that $\mathcal{V}_{k}^p(K_l)$ are finite.
Therefore,
\begin{eqnarray*}
\lim_{ l \to \infty} \mathcal{V}_{k}^p(K_l)&=&  2^n  \lim_{ l \to \infty} \sum\limits_{m = 0}^{k} l^{-\frac{n(n-1)}{n+p}+m} \binom{\frac{n(1-p)}{n+p}}{k-m} \sum_{\substack{
			i_1, \dots, i_{n-1} \geq 0 \nonumber \\
			i_1 + 2i_2 + \dots + (n-  1)i_{n-1}=m}}
	 c(n, p, i(m))    \prod\limits_{j = 1}^{n - 1}  {n - 1\choose j}^{i_j} \nonumber \\
	 && \lim_{ l \to \infty} \int\limits_{S^{n-1}_+} \left(\left(1-\frac{1}{l}\right) 
\sum_{i=1}^n \xi_i + \frac{1}{l} \right) ^{m - k + \frac{n(1-p)}{n+p}}  \, d\mathcal{H}^{n-1}.	
\end{eqnarray*}
As for $\xi \in S^{n-1}_+$,  $1 \leq \sum_{i=1}^n \xi_i \leq \sqrt{n}$, the functions under the integral are uniformly in $l$ bounded by an integrable function
and by Lebegue's Dominated Convergence theorem we can interchange integration and limit. Therefore
\begin{eqnarray*}
&& \lim_{ l \to \infty} \int\limits_{S^{n-1}_+} \left(\left(1-\frac{1}{l}\right) 
\sum_{i=1}^n \xi_i + \frac{1}{l} \right) ^{m - k + \frac{n(1-p)}{n+p}}  \, d\mathcal{H}^{n-1} = \\
&& \int\limits_{S^{n-1}_+}  \lim_{ l \to \infty}  \left(\left(1-\frac{1}{l}\right) 
\sum_{i=1}^n \xi_i + \frac{1}{l} \right) ^{m - k + \frac{n(1-p)}{n+p}}  \, d\mathcal{H}^{n-1} =  \int\limits_{S^{n-1}_+} \left( \sum_{i=1}^n \xi_i  \right) ^{m - k + \frac{n(1-p)}{n+p}}  \, d\mathcal{H}^{n-1}.
\end{eqnarray*}
Hence
\begin{eqnarray}\label{Beispiel-p-Fall}
\lim_{ l \to \infty} \mathcal{V}_{k}^p(K_l) &= & 2^n  \lim_{ l \to \infty} \sum\limits_{m = 0}^{k}   l^{-\frac{n(n-1)}{n+p}+m} \binom{\frac{n(1-p)}{n+p}}{k-m} \sum_{\substack{
			i_1, \dots, i_{n-1} \geq 0 \nonumber \\
			i_1 + 2i_2 + \dots + (n-  1)i_{n-1}=m}}
	 c(n, p, i(m))  \,   \prod\limits_{j = 1}^{n - 1}  {n - 1\choose j}^{i_j}   \nonumber \\
&&	  \int\limits_{S^{n-1}_+} \left( \sum_{i=1}^n \xi_i  \right) ^{m - k + \frac{n(1-p)}{n+p}}  \, d\mathcal{H}^{n-1}.
\end{eqnarray}
\par
\noindent
We look now in more detail at the  case $p <-n$.
In this case, the exponent  $-\frac{n(n-1)}{n+p}+m$ of $l$  in (\ref{Beispiel-p-Fall})  is strictly positive.
We observe that for $\alpha \in \mathbb{N} \cup \{0\}$
\begin{equation*}
\binom{\frac{n(1-p)}{n+p}}{\alpha} \left\{
\begin{aligned} 
& = 0  \quad &\text{ if } \,  \alpha \, &= 0 \\ 
& < 0  \quad &\text{ if } \,  \alpha  \, &\text{ is odd } \\ 
&   >0  \quad &\text{ if } \,  \alpha  \, &\text{ is even}.
\end{aligned}\right.
\end{equation*}
The analysis of the expression 
$$
F_m(p) =\sum\limits_{\substack{
			i_1, \dots, i_{n-1} \geq 0 \nonumber \\
			i_1 + 2i_2 + \dots + (n-  1)i_{n-1}=m}}
	 c(n, p, i(m))  \,   \prod\limits_{j = 1}^{n - 1}  {n - 1\choose j}^{i_j} $$
is more involved. We have that
\begin{equation*}\label{m=1}
F_1(p) =  (n-1)\, \frac{n}{n+p}	 
\end{equation*}
 which is increasing and $F_1(p)  \leq 0$ on $[-\infty, -n)$.
\begin{equation*}\label{m=2}
F_2(p) = \frac{(n-1)}{2!}\, \frac{n}{n+p} \left((n-1) \frac{n}{n+p}-1\right),
\end{equation*}
which is decreasing and $F_2(p)  \geq 0$ on $[-\infty, -n)$. 
\begin{equation*}\label{m=3}
F_3(p) =  \frac{(n-1)}{3!}\, \frac{n}{n+p} \left((n-1)^2 \left(\frac{n}{n+p}\right)^2 -3 (n-1) \frac{n}{n+p}+2 \right),
\end{equation*}
which is increasing  and  $F_3(p) \leq 0$ on $[-\infty, -n)$.

\noindent
From the analysis of the signs of $\binom{\frac{n(1-p)}{n+p}}{\alpha}$ and the $F_m$ we  conclude from (\ref{Beispiel-p-Fall}) that
$$
\lim_{ l \to \infty} \mathcal{V}_{1}^p(K_l)= - \infty
$$
and thus $\mathcal{V}_{1}^p$ is not lower semi continuous for $p \in [-\infty, -n)$,
$$
\lim_{ l \to \infty} \mathcal{V}_{2}^p(K_l)=  \infty
$$
and thus $\mathcal{V}_{2}^p$ is not upper semi continuous for $p \in [-\infty, -n)$,
$$
\lim_{ l \to \infty} \mathcal{V}_{3}^p(K_l)= - \infty
$$
and thus $\mathcal{V}_{3}^p$ is not lower semi continuous for $p \in [-\infty, -n)$. Starting from $m=4$, the behavior of $F_m$ is not monotone anymore on $[-\infty, 0)$.

\noindent
Similarly, for  $n <p < n(n-2)$, 
$$
\lim_{ l \to \infty} \mathcal{V}_{\frac{n(n-1)}{n+p}}^p(K_l) = F_{\frac{n(n-1)}{n+p}} (p) \int\limits_{S^{n-1}} \left( \sum_{i=1}^n \xi_i  \right) ^{\frac{n(1-p)}{n+p}}  \, d\mathcal{H}^{n-1} 
$$ is a positive or negative number,
 which disproves upper or lower semi continuity, depending on the sign of $F_{\frac{n(n-1)}{n+p}}(p)$. 

\end{proof}

\vskip 5mm

\subsection*{Acknowledgement}
The second author wants to thank the Hausdorff Research Institute for Mathematics. Part of the work on the paper was carried out during her stay there. The authors also wish to thank the Institute for Computational and Experimental Research in Mathematics (ICERM) for the hospitality. The manuscript was completed during their participation at the program ``Harmonic Analysis and Convexity".

	\begin{center}
		\begin{tabular}{l l}
			\multirow{5}{17em}{\addressT} & \qquad \qquad  \multirow{5}{17em}{\addressW}
		\end{tabular}
	\end{center}	
	
\end{document}